\def\ps@pprintTitle{%
 \let\@oddhead\@empty
 \let\@evenhead\@empty
 \def\@oddfoot{}%
 \let\@evenfoot\@oddfoot}
\newcommand\numberthis{\addtocounter{equation}{1}\tag{\theequation}}
\newtheorem{thm}{Theorem}[section]
\newtheorem{lem}[thm]{Lemma}
\newtheorem{prop}[thm]{Proposition}
\theoremstyle{definition}
\newtheorem{remark}{Remark}
\newtheorem{cor}[thm]{Corollary}
\newcommand{\BigO}[1]{\ensuremath{\operatorname{O}\left(#1\right)}}
\newcommand{\BigOm}[1]{\ensuremath{\operatorname{O}_{m}\left(#1\right)}}
\newcommand{\BigOkr}[1]{\ensuremath{\operatorname{O}_{m,r}\left(#1\right)}}
\newcommand{\BigOkmld}[1]{\ensuremath{\operatorname{O}_{m,d,\ell}\left(#1\right)}}
\newcommand{\BigOmlamda}[1]{\ensuremath{\operatorname{O}_{m,\Lambda}\left(#1\right)}}
\renewcommand{\geq}{\geqslant}
\renewcommand{\leq}{\leqslant}
\renewcommand{\r}{\rangle}
\begin{document}

\begin{frontmatter}

\title{Distribution of Farey fractions with $k$-free denominators}

\author[addr1]{Bittu Chahal}
 \ead{bittui@iiitd.ac.in}
 
\author[addr2]{Tapas Chatterjee}
 \ead{tapasc@iitrpr.ac.in}

 \author[addr1]{Sneha Chaubey}
 \ead{sneha@iiitd.ac.in}

 \address[addr1]{Department of Mathematics, IIIT Delhi, New Delhi 110020.}
 \address[addr2]{Department of Mathematics, IIT Ropar, Punjab 140001.}

%\author[Chahal]{Bittu~Chahal$^\ast$}
%\author[Chatterjee]{Tapas~Chatterjee$^\dagger$}
%\author[Chaubey]{Sneha~Chaubey$^\ast$}
%\address{$^\ast$ Department of Mathematics, IIIT Delhi, New Delhi 110020\\$^\dagger$Department of Mathematics, IIT Ropar, Punjab 140001.}

\begin{abstract}
We investigate the distributional properties of the sequence of Farey fractions with $k$-free denominators in residue classes, defined as
\[\mathscr{F}_{Q,k}^{(m)}:=\left\{\frac{a}{q}\ |\ 1\leq a\leq q\leq Q,\ \gcd(a,q)=1,\ q\ \text{is}\ k\text{-free}\ \&\ q\equiv b\pmod{m} \right\}.\]
We show that $\left(\mathscr{F}_{Q,k}^{(m)}\right)_{Q\ge 1}$ is equidistributed modulo one, and prove analogues of the classical results of Franel, Landau, and Niederreiter for $\left(\mathscr{F}_{Q,k}^{(m)}\right)_{Q\ge 1}$, particularly, deriving an equivalent form of the generalized Riemann hypothesis (GRH) for Dirichlet $L$-functions in terms of the distribution of $\left(\mathscr{F}_{Q,k}^{(m)}\right)_{Q\ge 1}$. 
Beyond examining the global distribution, we also study the local statistics of these sequences. We establish formulas for all levels ($k\ge 2$) of correlation measure. Specifically, we show the existence of the limiting pair  ($k=2$) correlation function and provide an explicit expression for it. Our results are based upon the estimation of weighted Weyl sums and weighted lattice point counting in restricted domains. 
\end{abstract}

\begin{keyword}
Farey fractions, $\nu$-level correlation, pair-correlation, Weyl sum, discrepancy, Generalized Riemann Hypothesis, $k$-free numbers,

 \MSC[2020] 11B57 \sep 11J71 \sep 11K38 \sep 11L07 \sep 11L15 \sep 11M26.
\end{keyword}

\end{frontmatter}

\section{Introduction and main results}
Let $Q$ be a positive integer. The Farey sequence of order $Q$ is defined as follows:
\[\mathcal{F}_Q:=\left\{\frac{a}{q} : 1\leq a\leq q\leq Q,\ \gcd(a,q)=1 \right\}. \]

Let $k\geq 2$ be an integer. A number $n$ is said to be $k$-free if for every prime $p|n$, we have $p^k\nmid n$. It is well known that the density of $k$-free numbers is $1/\zeta(k)$, where $\zeta(s)$ represents the Riemann zeta function. There is a vast literature on the distribution of $k$-free numbers \cite{MR43120, MR2180456}.
In this article, we are interested in the distribution of Farey fractions whose denominators are $k$-free and that lie within an arithmetic progression. 
Denote
\[\mathscr{F}_{Q,k}^{(m)}:=\left\{\frac{a}{q}\ |\ 1\leq a\leq q\leq Q,\ \gcd(a,q)=1,\ q\ \text{is}\ k\text{-free}\ \&\ q\equiv b\pmod{m} \right\}, \numberthis\label{f_k}\]
where $m\in\mathbb{N},\ b\in\mathbb{Z}$ and $(b,m)=1$.

Equidistribution modulo one is concerned with the distribution of fractional parts of real numbers in $[0,1]$. 
A sequence $(x_n)_{n=1}^{\infty}$ of real numbers is said to be equidistributed or uniformly distributed modulo one, if for every interval $I\subseteq[0,1)$, we have
\[\lim_{N\to\infty}\frac{1}{N}\#\left\{1\leq n\leq N\ |\ \{x_n\}\in I \right\}=|I|, \]
where $\{x_n\}$ denotes the fractional part of $x_n$. 
The development of the theory of equidistribution began with the classical work of Weyl \cite{Weyl}, where he connected equidistribution with an exponential sum, also known as Weyl sum.
 The Weyl sums are central to various number-theoretic problems, including the zero-free region of the Riemann zeta function, the prime number theorem, and the Diophantine equations. The Weyl sums have been extensively studied in different forms by various authors. Specifically, the Weyl sum over the roots of quadratic congruences was studied in \cite{MR2926988, MR4137069}. The metric theory of Weyl sums appeared in \cite{MR4542717}. For more details and problems on the Weyl sums, one may refer to \cite{MR4843309, MR4102722, MR4298525} and references therein. In our first result, we establish an upper bound for the Weyl sum over Farey fractions with $k$-free denominators in residue classes. The Weyl sum for Farey fractions was dealt in \cite{MR922425, Fujii}.
\begin{thm}\label{Weyl sum}
For $r\in\mathbb{Z}\setminus\{0\}$, we have
    \[\sum_{\gamma\in \mathscr{F}_{Q,k}^{(m)} } e\left(r\gamma \right)=\BigOkr{Q\exp{\left(-c\frac{(\log Q)^{3/5}}{(\log\log Q)^{1/5}} \right)}},\]
    where $c>0$ is some constant and $e(x)=e^{2\pi ix}$.
\end{thm}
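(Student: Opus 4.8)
The plan is to recognize the inner sum over numerators as a Ramanujan sum and to reduce the entire Weyl sum to Möbius sums over integers lying in fixed arithmetic progressions, to which one applies the prime number theorem with the Korobov--Vinogradov error term. First I would group the fractions by their denominator and perform the sum over numerators. Writing $S := \sum_{\gamma\in\mathscr{F}_{Q,k}^{(m)}} e(r\gamma)$ and summing first over $a$ with $1\le a\le q$ and $\gcd(a,q)=1$, the inner sum is precisely the Ramanujan sum $c_q(r) = \sum_{\substack{1\le a\le q,\ \gcd(a,q)=1}} e(ra/q)$, so that
\[S = \sum_{\substack{q\le Q\\ q\ k\text{-free}\\ q\equiv b\,(m)}} c_q(r).\]

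Next I would open up the Ramanujan sum through the divisor identity $c_q(r) = \sum_{e\mid(q,r)} e\,\mu(q/e)$ and interchange the order of summation. Writing $q = ef$ with $e\mid r$, so that $\mu(q/e) = \mu(f)$, this yields
\[S = \sum_{e\mid r} e\sum_{\substack{f\le Q/e\\ ef\ k\text{-free}\\ ef\equiv b\,(m)}}\mu(f).\]
Since $r$ is fixed there are only $O_r(1)$ admissible $e$, so it suffices to bound each inner sum by $(Q/e)\exp(-c(\log Q)^{3/5}(\log\log Q)^{-1/5})$; the factor $e$ then cancels and summing over $e\mid r$ reproduces the claimed $\BigOkr{\cdot}$ bound.

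Then I would reduce each inner sum to a Möbius sum in a single residue class. Because $(b,m)=1$, the congruence $ef\equiv b\,(m)$ forces $(ef,m)=1$; hence only $e$ with $(e,m)=1$ survive, and for those the congruence is equivalent to $f\equiv b\bar e\,(m)$, where $\bar e$ is the inverse of $e$ modulo $m$. For the $k$-freeness, note that $\mu(f)\ne0$ already forces $f$ squarefree, so $v_p(ef)=v_p(e)+v_p(f)<k$ holds automatically for every prime except those $p$ with $v_p(e)=k-1$; setting $E := \prod_{p:\,v_p(e)=k-1} p$, the only remaining constraint is $(f,E)=1$ (and if $e$ itself fails to be $k$-free the inner sum is empty). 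Detecting $(f,E)=1$ by $\sum_{g\mid(f,E)}\mu(g)$ and writing $f=gh$, each inner sum becomes a finite combination, with bounded coefficients, of sums of the shape $\sum_{\substack{h\le X,\ h\equiv c\,(m'),\ (h,g)=1}}\mu(h)$ with $X\le Q$, $(c,m')=1$, and $m',g\mid\lcm(m,r)$ fixed; one further Möbius expansion removes $(h,g)=1$ at the cost of $O_r(1)$ additional progressions.

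The analytic heart, and the only genuinely hard step, is the estimate
\[\sum_{\substack{h\le X\\ h\equiv c\,(m')}}\mu(h)\ \ll_{m'}\ X\exp\!\left(-c_0\frac{(\log X)^{3/5}}{(\log\log X)^{1/5}}\right),\qquad (c,m')=1,\]
which encodes the Korobov--Vinogradov zero-free region for the Dirichlet $L$-functions $L(s,\chi)$ modulo $m'$: expressing the indicator of the progression through characters and using $\sum_n \mu(n)\chi(n)\, n^{-s} = 1/L(s,\chi)$, a standard Perron/contour argument over the zero-free region produces the stated saving. Since each $e,g\le r$ is bounded, one has $\log X = \log Q + O_r(1)$, whence $(\log X)^{3/5}(\log\log X)^{-1/5}\sim(\log Q)^{3/5}(\log\log Q)^{-1/5}$ and all error terms are of the required order. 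Assembling the $O_r(1)$ pieces and absorbing the $\phi(m)$-type factors into the implied constant then completes the proof.
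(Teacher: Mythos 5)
Your argument is correct, and its opening reduction is in fact identical to the paper's: the paper detects $\gcd(a,q)=1$ with M\"obius and sums $e(ar/q)$ over all $a\le q$ to arrive at
\[\sum_{\gamma\in\mathscr{F}_{Q,k}^{(m)}}e(r\gamma)=\sum_{\substack{q\le Q\\ q\mid r}}q\sum_{\substack{d\le Q/q\\ qd\equiv b\ (\mathrm{mod}\ m)}}\mu(d)\mu_k(qd)^2,\]
which is exactly your $\sum_{e\mid r}e\sum_{f\le Q/e,\ ef\equiv b\ (m),\ ef\ k\text{-free}}\mu(f)$ with $(e,f)=(q,d)$; quoting the Ramanujan-sum identity $c_q(r)=\sum_{e\mid(q,r)}e\,\mu(q/e)$ versus deriving it on the spot is the same computation. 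Where you genuinely diverge is in the treatment of the $k$-free twist. The paper keeps $\mu_k(qd)^2$ inside the sum and proves a bespoke estimate (Proposition \ref{mu}) for $\sum_{n\le x,\ (n,\ell)=1,\ n\equiv b\ (m)}\mu(n)\mu_k(nd)^2$, whose Dirichlet series is $1/L(s,\chi)$ times explicit Euler factors over $p\mid d$ and $p\mid\ell$, estimated via Perron and the Vinogradov--Korobov zero-free region \emph{uniformly} in $d$ and $\ell$. You instead strip the twist combinatorially: since $\mu(f)$ is supported on squarefree $f$ and $e\mid r$ is bounded, the $k$-freeness of $ef$ collapses to the single coprimality condition $(f,E)=1$ with $E=\prod_{p:\,v_p(e)=k-1}p$ bounded in terms of $r$, reducing everything to finitely many classical sums $\sum_{h\le X,\ h\equiv c\ (m')}\mu(h)$ to a bounded modulus, for which the Korobov--Vinogradov bound can simply be cited. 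Your route is more elementary and self-contained for this theorem; what it does not produce is the paper's uniformity in the twisting parameter $d$, which the authors need elsewhere (in the proofs of Theorems \ref{thm2}, \ref{beta2} and \ref{thm1} the analogous parameter ranges up to $Q$, so a bounded-modulus reduction would not suffice there).

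One mechanical slip, easily repaired: a further M\"obius expansion does \emph{not} remove the condition $(h,g)=1$ --- expanding $\sum_{t\mid(h,g)}\mu(t)$ and writing $h=th'$ reintroduces the coprimality $(h',t)=1$, so the recursion never terminates. Since $g$ is bounded and $(g,m)=1$ (as $g\mid E$, $E\mid e$ up to radical, and $(e,m)=1$), the clean fix --- consistent with your own phrase ``additional progressions'' --- is to split $h$ into the $\phi(g)$ residue classes modulo $g$ coprime to $g$ and combine with the class modulo $m$ by the Chinese remainder theorem, yielding $O_{m,r}(1)$ plain M\"obius sums over progressions to the bounded modulus $mg$. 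With that one-line repair the proof is complete.
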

The above theorem in conjunction with the Weyl criterion \cite[Theorem 2.1]{Weyl} immediately yields the following equidistribution result.
\begin{cor}\label{cor1}
    The Farey sequence $\left(\mathscr{F}_{Q,k}^{(m)}\right)_{Q\geq 1}$ is uniformly distributed modulo one.
\end{cor}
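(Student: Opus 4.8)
The plan is to invoke the Weyl criterion in the form adapted to a family of finite point sets whose sizes grow without bound: the family $\left(\mathscr{F}_{Q,k}^{(m)}\right)_{Q\ge 1}$ is equidistributed modulo one if and only if, for every fixed integer $r\neq 0$,
\[\frac{1}{N_Q}\sum_{\gamma\in\mathscr{F}_{Q,k}^{(m)}}e\left(r\gamma\right)\longrightarrow 0\qquad(Q\to\infty),\]
where $N_Q:=\#\mathscr{F}_{Q,k}^{(m)}$. Theorem~\ref{Weyl sum} already furnishes the numerator, so the one remaining ingredient is a lower bound of the correct order of magnitude for $N_Q$.

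First I would record the cardinality. Since each admissible denominator $q$ contributes exactly $\phi(q)$ coprime numerators in $[1,q]$,
\[N_Q=\sum_{\substack{q\le Q,\ q\ k\text{-free}\\ q\equiv b\,(\mathrm{mod}\ m)}}\phi(q).\]
Writing the $k$-free indicator as $\sum_{d^k\mid q}\mu(d)$, detecting the congruence $q\equiv b\,(\mathrm{mod}\ m)$, and summing Euler's totient over the resulting arithmetic progressions via Abel summation shows that $N_Q\sim C_{k,m,b}\,Q^2$ for an explicit constant $C_{k,m,b}>0$; for the corollary only the lower bound $N_Q\gg_{k,m}Q^2$ is needed.

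Combining the two estimates gives
\[\left|\frac{1}{N_Q}\sum_{\gamma\in\mathscr{F}_{Q,k}^{(m)}}e\left(r\gamma\right)\right|\ll_{k,m,r}\frac{1}{Q}\exp\!\left(-c\frac{(\log Q)^{3/5}}{(\log\log Q)^{1/5}}\right),\]
which tends to $0$ as $Q\to\infty$; the Weyl criterion then yields the claim. There is no genuine obstacle here beyond Theorem~\ref{Weyl sum} itself: the sole point requiring care is bookkeeping in the cardinality estimate, namely verifying that the interaction of the coprimality condition $\gcd(a,q)=1$ with the congruence $q\equiv b\,(\mathrm{mod}\ m)$ leaves the leading constant $C_{k,m,b}$ strictly positive, which is guaranteed by the hypothesis $\gcd(b,m)=1$.
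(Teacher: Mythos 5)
Your proposal is correct and follows essentially the same route as the paper: the corollary is deduced from Theorem~\ref{Weyl sum} via the Weyl criterion, with the normalization handled by the quadratic growth of the cardinality. The lower bound $\mathcal{N}(Q,k,m)\gg_{k,m} Q^2$ that you sketch elementarily is exactly what the paper's Proposition~\ref{prop1} supplies (with a sharper error term via Perron's formula), so your only divergence is in how that standard ingredient is verified.
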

 Note that equidistribution modulo one is characterized as a qualitative asymptotic property; therefore, it is natural to study its corresponding quantitative aspect-namely, discrepancy, which is defined as follows:
For any $\alpha\in[0,1]$, let
%define
%\[A(\alpha;{N})=\#\{1\le i\le N \ | \ \{x_i\}\le\alpha\}.\]
$A(\alpha;{N})$ be the number of first $N$ terms of the sequence $(x_n)_{n=1}^{\infty}$ modulo one that do not exceed $\alpha$. Then the absolute discrepancy of the sequence $(x_n)_{n=1}^{\infty}$ is given by
\[D_{{N}}(x_1,\ldots, x_n)=\sup_{0\leq \alpha\leq 1}R_{{N}}(\alpha),\numberthis\label{D_1}\]
where
\[R_{{N}}(\alpha)=\left|\frac{A(\alpha;{N})}{{N}}-\alpha \right|.\numberthis\label{R_N}\]
%For any $\alpha\in[0,1]$, let $A(\alpha;{N})$ be the number of terms of the sequence $(x_n)_{n=1}^{\infty}\cap\{x_1, x_2,\ldots, x_N\}$ modulo one that do not exceed $\alpha$. Then the absolute discrepancy of the sequence $(x_n)_{n=1}^{\infty}$ is given by
 %The study of the Farey sequence is of independent interest because of its role in the problems related to the Diophantine approximation and circle method, and its connection to the Riemann zeta function motivates their study. 
 The classical work of Franel \cite{Franel} and Landau \cite{Landau} showed that the quantitative statement about the uniform distribution of Farey fractions and the Riemann hypothesis are equivalent. Denote $N(Q)=|\mathscr{F}_Q|$ and $\mathscr{F}_Q=\{\beta_1<\beta_2\cdots<\beta_{N(Q)}\}$. Then, Franel proved that 
 %the supremum of the real parts of the zeros of the Riemann zeta function is the infimum of $\theta$ for which the following estimate holds
%\[\sum_{i=1}^{N(Q)}R_{N(Q)}^2(\gamma_i)=\BigO{Q^{-2+2\theta}}. \]
%In particular, 
the Riemann hypothesis is equivalent to the asymptotic formula
\[\sum_{i=1}^{N(Q)}R_{N(Q)}^2(\beta_i)=\BigO{Q^{-1+\epsilon}},\ \text{for all}\ \epsilon>0. \]
Note that by definition $R_{N(Q)}(\beta_j)=\left|\beta_j-\frac{j}{N(Q)}\right|.$
A similar version of Franel's result was proved by Landau \cite{Landau}, stating that the Riemann hypothesis is true if and only if, for all $\epsilon>0$,
\[\sum_{j=1}^{N(Q)}R_{N(Q)}(\beta_j)=\BigO{Q^{1/2+\epsilon}}.\]
We first derive an analogue of the above result for the sequence $\left(\mathscr{F}_{Q,k}^{(m)}\right)_{Q\geq 1}$. Denote $\mathcal{N}(Q,k,m)=|\mathscr{F}_{Q,k}^{(m)}|$ and $\mathscr{F}_{Q,k}^{(m)}=\{\gamma_1<\gamma_2\cdots<\gamma_{\mathcal{N}(Q,k,m)}\}$. 
\begin{thm}\label{thm2}
 Let $R_{\mathcal{N}(Q,k,m)}(\gamma_j)=\left|\gamma_j-\frac{j}{\mathcal{N}(Q,k,m)}\right|$. The generalized Riemann hypothesis (GRH) holds true if and only if, for all $\epsilon>0$, 
    \[\sum_{j=1}^{\mathcal{N}(Q,k,m)}R_{\mathcal{N}(Q,k,m)}(\gamma_j)=\BigOm{Q^{\frac{1}{2}+\epsilon}}. \]
   % where $\beta_j=\gamma_j-\frac{j}{\mathcal{N}(Q,k)}$.
\end{thm}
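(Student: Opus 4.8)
The plan is to recast the displacement sum as the mean deviation of the counting function, to expand that deviation arithmetically into character-twisted Möbius sums, and to read off its order of magnitude from GRH via the classical equivalence ``GRH $\Longleftrightarrow$ square-root cancellation in twisted Mertens functions''. Write $\mathcal{N}=\mathcal{N}(Q,k,m)$, let $A(\alpha)$ count the $\gamma_j\le\alpha$, and set $D(\alpha)=A(\alpha)-\alpha\mathcal{N}$. Since $A(\gamma_j)=j$, one has $D(\gamma_j)=-\mathcal{N}\bigl(\gamma_j-j/\mathcal{N}\bigr)$, so that
\[ \sum_{j=1}^{\mathcal{N}}R_{\mathcal{N}}(\gamma_j)=\sum_{j=1}^{\mathcal{N}}\left|\gamma_j-\frac{j}{\mathcal{N}}\right|=\frac{1}{\mathcal{N}}\sum_{j=1}^{\mathcal{N}}\bigl|D(\gamma_j)\bigr|. \]
By Cauchy--Schwarz this is at most $\mathcal{N}^{1/2}\bigl(\sum_j(\gamma_j-j/\mathcal{N})^2\bigr)^{1/2}$; recalling $\mathcal{N}\asymp_m Q^2$, the forward implication reduces to a Franel-type $L^2$ bound $\sum_j(\gamma_j-j/\mathcal{N})^2=\BigOm{Q^{-1+\epsilon}}$.

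For the arithmetic expansion I would detect coprimality by $\sum_{d\mid q}\mu(d)$ and write $\lfloor x\rfloor=x-\tfrac12-\psi(x)$ with $\psi(x)=x-\lfloor x\rfloor-\tfrac12$, so that the count of $a\le\alpha q$ coprime to $q$ equals $\alpha\phi(q)-\tfrac12\mathbf{1}_{[q=1]}-\sum_{d\mid q}\mu(d)\psi(\alpha q/d)$. Summing over the admissible $q$ peels off the main term $\alpha\mathcal{N}$ and leaves
\[ D(\alpha)=-\sum_{\substack{q\le Q,\ q\ k\text{-free}\\ q\equiv b\,(m)}}\ \sum_{d\mid q}\mu(d)\,\psi\!\left(\frac{\alpha q}{d}\right)+\BigO{1}. \]
Substituting $q=de$, detecting the $k$-free condition on $de$ by $\mathbf{1}_{[n\ k\text{-free}]}=\sum_{t^k\mid n}\mu(t)$ and the congruence by $\mathbf{1}_{[de\equiv b\,(m)]}=\phi(m)^{-1}\sum_{\chi\bmod m}\bar\chi(b)\chi(de)$ turns $D(\alpha)$ into a finite combination, over $\chi\bmod m$, of Möbius- and character-weighted sawtooth sums. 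Crucially, the auxiliary variable $t$ (running over $t^{k}\mid de$) contributes only a Dirichlet factor that is analytic and bounded for $\Re s>1/k$, and since $k\ge2$ this region contains the half-plane $\Re s>1/2$; thus the only singularities to the right of the critical line arise from the factor $1/L(s,\chi)$, which is exactly why the threshold is governed by GRH rather than by the zeros of $\zeta(ks)$.

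To finish the forward direction I would take $\sum_j(\gamma_j-j/\mathcal{N})^2=\mathcal{N}^{-2}\sum_{j}D(\gamma_j)^2$, insert the Fourier expansion $\psi(x)=-\sum_{h\ge1}(\pi h)^{-1}\sin 2\pi h x$, and expand the square, summing over the numerators of the Farey fractions. Orthogonality of the sawtooth frequencies collapses the resulting double sum onto near-diagonal terms and expresses everything through partial sums of $\mu(n)\chi(n)$, i.e.\ through the twisted Mertens functions $M(x,\chi)=\sum_{n\le x}\mu(n)\chi(n)$. Under GRH one has $M(x,\chi)=\BigOm{x^{1/2+\epsilon}}$ uniformly for $\chi\bmod m$, and partial summation together with the convergent $t$-sums (contributing $\zeta(k)$-type constants) then yields $\sum_j(\gamma_j-j/\mathcal{N})^2=\BigOm{Q^{-1+\epsilon}}$; the Cauchy--Schwarz step of the first paragraph gives $\sum_j R_{\mathcal{N}}(\gamma_j)=\BigOm{Q^{1/2+\epsilon}}$.

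The converse is where I expect the main difficulty. Assuming the stated bound, I would reverse the arithmetic identity to show that the displacement sum controls each $M(x,\chi)$, and then invoke the classical equivalence that $M(x,\chi)=\BigOm{x^{1/2+\epsilon}}$ for all $\chi\bmod m$ forces every $L(s,\chi)$ to be zero-free in $\Re s>1/2$. The hard part is making this propagation quantitative and two-sided: a hypothetical zero of some $L(s,\chi)$ at $\Re s=\Theta>1/2$ must be shown, via Landau's oscillation theorem applied to the Dirichlet series attached to $D$, to force an oscillation of amplitude $\gg Q^{\Theta-\epsilon}$ in the displacement sum, contradicting the hypothesis. Establishing this lower bound uniformly in the modulus $m$, while cleanly separating the (harmless) $k$-free weight from the zeros of $L(s,\chi)$, is the principal obstacle; the Weyl-sum bound of Theorem~\ref{Weyl sum} is insufficient here, since it only encodes the zero-free region and hence equidistribution, not the finer square-root-cancellation threshold equivalent to GRH.
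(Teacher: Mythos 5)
Your forward direction (GRH implies the bound) has the right skeleton --- reduce via Cauchy--Schwarz to the second-moment estimate $\sum_j(\gamma_j-j/\mathcal{N})^2\ll_m Q^{-1+\epsilon}$, expand arithmetically with M\"obius, characters and the $k$-free detector, and feed in square-root cancellation for twisted Mertens sums (your remark that the $t^k\mid de$ factor is analytic for $\Re s>1/k\le 1/2$, so only $1/L(s,\chi)$ matters, is exactly the structural point in the paper's Proposition~\ref{mu}). But the step where you evaluate the second moment is wrong: you sum the Fourier expansion of the sawtooth over the Farey points $\gamma_j$ and invoke ``orthogonality of the sawtooth frequencies'' to collapse onto near-diagonal terms. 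There is no orthogonality over the Farey points: $\sum_j e(h\gamma_j)$ is itself a nontrivial Weyl sum (equal to $\sum_{q\le Q,\,q\mid h}qM_q(Q/q)$, part of what is being estimated), and the off-diagonal correlations are genuinely of main order --- one has $\int_0^1 f(q_1u)f(q_2u)\,du=\gcd(q_1,q_2)^2/(12q_1q_2)$ with $f(x)=x-\lfloor x\rfloor-\tfrac12$, and in the paper's bound \eqref{5} every value of $\delta=\gcd(q_1,q_2)$ contributes at the same order after writing $q_i=\delta q_i'$, so discarding non-diagonal terms loses main contributions. The paper circumvents the discrete sum entirely: setting $G(u)=\sum_j f(u+\gamma_j)=\sum_{q\le Q}M_q(Q/q)f(qu)$ (Lemma~\ref{f(gamma)}), it computes $I=\int_0^1G(u)^2\,du$ twice --- once by the gcd identity plus the GRH bound $M_q(Q/q)\ll_m(Q/q)^{1/2+\epsilon}\prod_{p\mid q}\bigl(1+\tfrac{1}{\sqrt p-1}\bigr)$, and once \emph{exactly} as $I=\mathcal{N}\sum_jR_{\mathcal{N}}^2(\gamma_j)+\tfrac1{12}$, using that $G$ is linear between consecutive fractions (Lemma~\ref{prop7}). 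That exact identity is the missing bridge between the continuous second moment, where true orthogonality is available, and your discrete displacement sum; without it your plan stalls at precisely the point you glossed over.

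The converse is a larger gap: you declare it the principal obstacle and sketch a Landau-oscillation argument applied to a Dirichlet series attached to $D$, which you do not carry out and which is far heavier than needed. The paper's converse is a short exponential-sum trick you missed: apply the weighted Farey expansion with $f(x)=e(x)$; since $\sum_{a\le q}e(a/q)=0$ for $q>1$, the whole sum collapses to the single term $q=1$, giving $\sum_je(\gamma_j)=M_1(Q)=\sum_{n\le Q,\,n\equiv b\pmod m}\mu(n)$ exactly (the $k$-free weight $\mu_k(n)^2$ is invisible because $\mu(n)\ne0$ forces $n$ squarefree). Then, writing $e(\gamma_j)=e(j/\mathcal{N})+e(j/\mathcal{N})\bigl(e(\gamma_j-j/\mathcal{N})-1\bigr)$ and using $\sum_{j=1}^{\mathcal{N}}e(j/\mathcal{N})=0$ together with $|e(x)-1|\le2\pi|x|$, the hypothesis yields $|M(Q)|\le2\pi\sum_jR_{\mathcal{N}}(\gamma_j)\ll_m Q^{1/2+\epsilon}$ for all moduli $m$ and residues $b$, which is the classical statement equivalent to GRH --- no oscillation theorem for the displacement function is required. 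Identifying this collapse of the Weyl sum to the Mertens function in the progression is the entire content of this direction, and it is absent from your proposal.
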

We next prove a closed-form formula for the second moment of the displacement of Farey fractions with $k$-free denominators. 
\begin{thm}\label{beta2}
   Let $R_{\mathcal{N}(Q,k,m)}(\gamma_j)=\left|\gamma_j-\frac{j}{\mathcal{N}(Q,k,m)}\right|$, and $M_q\left(x \right)=\sum_{\substack{n\leq x\\nq\equiv b\pmod{m}}}\mu(n)\mu_k(nq)^2$ for integers $b,m$ as in \eqref{f_k}, then, we have
   \[\sum_{j=1}^{\mathcal{N}(Q,k,m)}R_{\mathcal{N}(Q,k,m)}^2(\gamma_j)=\frac{1}{12\mathcal{N}(Q,k,m)}\left(\sum_{q_1, q_2\leq Q}M_{q_1}\left(\frac{Q}{q_1}\right)M_{q_2}\left(\frac{Q}{q_2}\right)\frac{(\gcd(q_1,q_2))^2}{q_1q_2}-1 \right).\]
 Moreover, the right-hand side above is
   bounded by \[\ll_{m} \left\{\begin{array}{cc}
   {\exp{\left(-c\frac{(\log Q)^{3/5}}{(\log\log Q)^{1/5}} \right)}},  & \mbox{unconditionally} ,\\
  {Q^{-1+\epsilon}},   & \mbox{on the GRH}.
\end{array}\right. \]
\end{thm}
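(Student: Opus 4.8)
The plan is to reduce the second moment to an $L^2$-average of a remainder function and then evaluate that average by Parseval. Write $A(t)=\#\{\gamma\in\mathscr{F}_{Q,k}^{(m)}:\gamma\le t\}$ and $\rho(t)=A(t)-\mathcal N(Q,k,m)\,t$. First I would open up the rank: since $j=A(\gamma_j)$, counting the admissible numerators $a\le tq$ with $\gcd(a,q)=1$ by M\"obius inversion and carrying the $k$-free constraint and the congruence $q\equiv b\pmod m$ through the same variable gives the exact expansion $\rho(t)=-\sum_{q\le Q}M_q(Q/q)\{qt\}$, where $M_q$ is precisely the arithmetic sum in the statement (the inner M\"obius variable plays the role of $n$, and $qt$ the role of the scaled point). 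Evaluated at a Farey point this yields $\rho(\gamma_j)=-\mathcal N(Q,k,m)\big(\gamma_j-\tfrac{j}{\mathcal N(Q,k,m)}\big)$, so that $R_{\mathcal N(Q,k,m)}(\gamma_j)=|\rho(\gamma_j)|/\mathcal N(Q,k,m)$.

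The next step is a purely combinatorial identity. Since $\rho$ decreases linearly with slope $-\mathcal N(Q,k,m)$ between consecutive fractions and jumps by $+1$ at each $\gamma_j$, integrating $\rho^2$ over each gap and telescoping gives the exact relation
\[\int_0^1\rho(t)^2\,dt=\mathcal N(Q,k,m)\sum_{j}R_{\mathcal N(Q,k,m)}^2(\gamma_j)+\sum_{j}\Big(\gamma_j-\tfrac{j}{\mathcal N(Q,k,m)}\Big)+\tfrac13,\]
the boundary contributions at $t=0,1$ vanishing. The linear sum $\sum_j(\gamma_j-j/\mathcal N(Q,k,m))$ I would compute exactly from the symmetry $\gamma\mapsto 1-\gamma$ of the fractions, which pins down the additive constant. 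It then remains to evaluate $\int_0^1\rho^2$: expanding the square and writing $\{x\}=\psi(x)+\tfrac12$ with $\psi(x)=\{x\}-\tfrac12$, the surviving correlation is $\int_0^1\psi(q_1t)\psi(q_2t)\,dt=\gcd(q_1,q_2)^2/(12q_1q_2)$ (a one-line Fourier/Parseval computation), the linear terms integrating to zero and the remaining constant being explicit. This produces $\int_0^1\rho^2=\tfrac1{12}\sum_{q_1,q_2\le Q}M_{q_1}(Q/q_1)M_{q_2}(Q/q_2)\gcd(q_1,q_2)^2/(q_1q_2)$ up to an explicit constant, and assembling the three ingredients gives the closed form.

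For the bounds everything reduces to estimating $M_q(x)$ and then the weighted double sum. Detecting the congruence $nq\equiv b\pmod m$ by Dirichlet characters mod $m$ and writing the $k$-free indicator of $nq$ as $\sum_{d^k\mid nq}\mu(d)$ turns $M_q(x)$ into a short combination of twisted Mertens-type sums $\sum_{n\le y}\mu(n)\chi(n)$ over residue classes; the zero-free region for $L(s,\chi)$ gives $M_q(x)\ll_m x\exp\big(-c(\log x)^{3/5}(\log\log x)^{-1/5}\big)$ unconditionally, while GRH gives $M_q(x)\ll_m x^{1/2+\epsilon}$. I would then bound $\mathcal S:=\sum_{q_1,q_2}M_{q_1}(Q/q_1)M_{q_2}(Q/q_2)\gcd(q_1,q_2)^2/(q_1q_2)$ by writing $q_i=g a_i$ with $g=\gcd(q_1,q_2)$ and $\gcd(a_1,a_2)=1$, so the weight collapses to $1/(a_1a_2)$. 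On GRH the resulting triple sum converges after the $a_i$-sums and leaves $\ll Q^{1+\epsilon}$, and dividing by $\mathcal N(Q,k,m)\asymp Q^2$ yields $Q^{-1+\epsilon}$. Unconditionally the same substitution gives $\sum_{g,a_1,a_2}g^{-2}(a_1a_2)^{-2}Q^2\,E(Q/(ga_1))E(Q/(ga_2))$ with $E$ the exponential factor, dominated by the term $g=a_1=a_2=1$, which after division by $\mathcal N(Q,k,m)$ reproduces the stated saving.

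The hard part will be the analytic estimation in the last step rather than the algebra of the first two. Two points need care: the $k$-free condition is on the \emph{product} $nq$, so the divisibility $d^k\mid nq$ couples the auxiliary variable $d$ to the fixed $q$ and forces one to track $\gcd(d^k,q)$ when reducing to characters, which must be done with explicit uniformity in $m$; and in the unconditional double sum the exponential saving degrades as $Q/(ga_i)\to 1$, so one must split the ranges of $g,a_1,a_2$ and play the exponential factor against the $g^{-2}(a_1a_2)^{-2}$ decay to confirm that the diagonal term dominates. I expect these uniformity bookkeeping issues, rather than any single deep estimate, to be where the real work lies, the arithmetic input being exactly the zero-free region and GRH already invoked for Theorem~\ref{Weyl sum}.
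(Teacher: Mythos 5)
Your proposal is correct, and at its core it is the same double-counting argument as the paper's: the paper computes $I=\int_0^1G(u)^2\,du$ for $G(u)=\sum_{q\le Q}M_q(Q/q)f(qu)$ with the \emph{centered} sawtooth $f(x)=\{x\}-\tfrac12$, once via the Franel integral $\int_0^1f(q_1u)f(q_2u)\,du=\gcd(q_1,q_2)^2/(12q_1q_2)$ and once via the piecewise-linear closed form of Lemma \ref{prop7}; these are exactly your two evaluations of $\int_0^1\rho^2$, since your $\rho(t)=-\sum_{q\le Q}M_q(Q/q)\{qt\}$ differs from $-G$ by the constant $\tfrac12\sum_{q\le Q}M_q(Q/q)$. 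The organizational differences are still worth recording. First, working with the uncentered $\{qt\}$ forces you to evaluate two extra exact quantities, $\sum_j(\gamma_j-j/\mathcal{N}(Q,k,m))$ and $\tfrac14\bigl(\sum_{q\le Q}M_q(Q/q)\bigr)^2$; both are computable as you say — setting $N=nq$ gives $\sum_{q\le Q}M_q(Q/q)=\sum_{N\le Q,\ N\equiv b\ (m)}\mu_k(N)^2\sum_{n\mid N}\mu(n)$, which equals $1$ precisely when $b\equiv1\pmod m$, and the symmetry sum depends on whether $1/1\in\mathscr{F}_{Q,k}^{(m)}$ — and assembling your three ingredients reproduces the stated constant $-1$ exactly when $b\equiv1\pmod m$, with a different additive constant otherwise. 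Your bookkeeping thus makes visible a dependence on $b\bmod m$ that the paper's centered route absorbs silently (the $-\tfrac12$ in Lemma \ref{prop7} uses $\sum_nM_n(Q/n)=1$, and the telescoping in the paper's Case II uses $\gamma_{\mathcal{N}(Q,k,m)}=1$, both of which presuppose $b\equiv1\pmod m$); this affects only the exact constant, not the bounds. Second, for the unconditional estimate the paper expands one factor $M_{q}$, substitutes $\delta=\gcd(q_1,q_2)$, and runs a Dirichlet hyperbola argument requiring both Proposition \ref{mu} and the weighted asymptotic of Proposition \ref{Prop1}, whereas you keep both $M$'s intact, apply the pointwise bound of Proposition \ref{mu} (whose Euler factors $\prod_{p\mid q}\sqrt{p}/(\sqrt{p}-1)\ll q^{\epsilon}$ are harmless), and split the $(g,a_1,a_2)$ ranges; this does close, since on the range $ga_i\le\sqrt{Q}$ the factor $E(\sqrt{Q})$ still has the shape $\exp\bigl(-c'(\log Q)^{3/5}(\log\log Q)^{-1/5}\bigr)$, while the complementary range is $\ll Q^2\sum_{n>\sqrt{Q}}\tau(n)n^{-2+\epsilon}\ll Q^{3/2+\epsilon}$ by the trivial bound $|M_q(x)|\ll x$ — a modest but genuine simplification, as it bypasses Proposition \ref{Prop1} (and the paper's somewhat delicate $\mu$-against-$\log$ cancellation in the hyperbola's second term) entirely. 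Your GRH computation coincides with the paper's.
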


%\begin{thm}\label{betaRH}
 %Let $\gamma_i,\ 1\leq i\leq \mathcal{N}(Q,k)$ be the Farey fractions in $\mathscr{F}_{Q,k}$. If $\beta_j=\gamma_j-\frac{j}{\mathcal{N}(Q,k)}$ then we have
    %\[ \sum_{v=1}^{\mathcal{N}(Q,k)}\beta_j^2=\left\{\begin{array}{cc}
   %\BigOk{\exp{\left(-c\frac{(\log Q)^{2/3}}{(\log\log Q)^{1/3}} \right)}},  & \mbox{unconditionally} ,\\
  %\BigOk{Q^{-1+\epsilon}},   & \mbox{on the RH}.
%\end{array}\right.\]
%\end{thm}
%In order to get some insights into the RH, we study the discrepancy of Farey fractions. 
The proof involves decomposing the weighted sum of Merten's function with congruence constraints in two different forms. To establish the bounds, we employ the Dirichlet hyperbola method alongside the non-trivial bounds for a twisted M\"{o}bius sum.

The foundational result on the equidistribution of irreducible fractions between $0$ and $1$, interpreted in terms of frequencies of certain almost periodic functions was given by Erd\H{o}s et al. \cite{Erdos}. Neville \cite{Neville} investigated the discrepancy of Farey fractions, proving that $D_{N(Q)}(\mathcal{F}_Q)\asymp \log Q/Q$. Subsequently, Niederreiter \cite{ Niederreiter} improved this result to $D_{N(Q)}(\mathcal{F}_Q)\asymp 1/Q$, and finally, Dress \cite{Dress} refined Niederreiter's result by showing that the discrepancy: $D_{N(Q)}(\mathcal{F}_Q)=1/Q$. 
Several authors \cite{MR2273359, MR2275343, Bchahal, MR3871604} have since studied the discrepancy of irreducible fractions in various contexts and with different congruence restrictions on denominators. In here,
we calculate the discrepancy for the sequence $\left(\mathscr{F}_{Q,k}^{(m)}\right)_{Q\ge 1}$ and establish bounds similar to those of Niederreiter, where the constants now depend $m$. We prove the following analogue for $\mathscr{F}_{Q,k}^{(m)}$ , complementing Corollary \ref{cor1}.
%Recently, for polynomial $P(x)\in\mathbb{Z}[X]$ with $P(0)=0$, Chahal et al. \cite{Bchahal} define polynomial Farey fractions as 
%\[\mathcal{F}_{Q,P}:=\left\{\frac{a}{q}: 1\leq a\leq q\leq Q,\ \gcd (P(a),q)=1\right\},\]
%studied their global and local distribution via discrepancy and pair correlation measure. 
\begin{thm}\label{thm1}
 For all $Q\geq 1$, we have
    \[D_{\mathcal{N}(Q,k)}\left(\mathscr{F}_{Q,k}^{(m)}\right)\asymp \frac{1}{Q}, \]
    where implied constants depend on $m$.
\end{thm}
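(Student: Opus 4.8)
The plan is to compute the counting function $A(\alpha;\mathcal N)$ of $\mathscr{F}_{Q,k}^{(m)}$ exactly by Möbius inversion, isolate the linear main term $\alpha\mathcal N$ (here $\mathcal N=\mathcal N(Q,k,m)$), and show the resulting error is $\BigOm{Q}$ uniformly in $\alpha$; since $\mathcal N\asymp_{m}Q^{2}$ (a standard mean value of $\phi$ over $k$-free integers in the progression $b\bmod m$), this gives $D_{\mathcal N}\ll_{m}1/Q$, and a one-line gap argument supplies the matching lower bound. Concretely, detecting coprimality by $\sum_{d\mid(a,q)}\mu(d)$ and $k$-freeness by $\mu_k(q)^2$, I first record
\[A(\alpha;\mathcal N)=\sum_{\substack{q\le Q,\ \mu_k(q)^2=1\\ q\equiv b(m)}}\ \sum_{d\mid q}\mu(d)\left\lfloor\frac{\alpha q}{d}\right\rfloor=\alpha\mathcal N-\sum_{\substack{q\le Q,\ \mu_k(q)^2=1\\ q\equiv b(m)}}\sum_{d\mid q}\mu(d)\,\psi\!\left(\frac{\alpha q}{d}\right)+\BigO{1},\]
where $\psi(x)=\{x\}-\tfrac12$ and the $\BigO{1}$ absorbs the term $q=1$. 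Substituting $q=dt$ and interchanging summation collapses the inner divisor sum into exactly the twisted Möbius sum of Theorem~\ref{beta2}:
\[A(\alpha;\mathcal N)-\alpha\mathcal N=-\sum_{t\le Q}\psi(\alpha t)\,M_t\!\left(\frac{Q}{t}\right)+\BigO{1},\qquad M_t\!\left(\frac{Q}{t}\right)=\sum_{\substack{d\le Q/t\\ dt\equiv b(m)}}\mu(d)\mu_k(dt)^2.\]

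For the upper bound I bound $|\psi|\le\tfrac12$ and reduce to showing $\sum_{t\le Q}\bigl|M_t(Q/t)\bigr|\ll_{m}Q$. The trivial estimate $|M_t(Q/t)|\le Q/t$ only yields $\ll Q\log Q$, so the logarithm must be saved by the cancellation in $M_t$. Using the non-trivial twisted Möbius bound already underlying Theorems~\ref{thm2} and~\ref{beta2}, namely $M_t(x)\ll_{m}x/L(x)$ uniformly in $t$ with $L(x)=\exp\!\big(c(\log x)^{3/5}(\log\log x)^{-1/5}\big)$, I split the $t$-range dyadically: on $t\in(Q/2^{j+1},Q/2^{j}]$ one has $Q/t\asymp 2^{j}$, so this block contributes $\ll Q/\max(1,L(2^{j}))$. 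The blocks with $2^{j}=\BigO{1}$ (equivalently $t\gg Q$) are handled by the trivial bound and contribute $\BigOm{Q}$, while $\sum_{j}1/L(2^{j})$ converges; altogether $\sum_{t\le Q}|M_t(Q/t)|\ll_{m}Q$, hence $\sup_{\alpha}|A(\alpha;\mathcal N)-\alpha\mathcal N|\ll_{m}Q$ and $D_{\mathcal N}(\mathscr{F}_{Q,k}^{(m)})\ll_{m}1/Q$. Only a log-power saving is needed, so this half is unconditional.

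For the lower bound, let $q_0$ be the largest $k$-free $q\le Q$ with $q\equiv b\pmod m$; then $1/q_0$ is the smallest element of $\mathscr{F}_{Q,k}^{(m)}$, so for every $\alpha<1/q_0$ we have $A(\alpha;\mathcal N)=0$, whence $R_{\mathcal N}(\alpha)=\alpha$. Letting $\alpha\uparrow 1/q_0$ and using $q_0\le Q$ gives $D_{\mathcal N}(\mathscr{F}_{Q,k}^{(m)})\ge 1/q_0\ge 1/Q$, completing $D_{\mathcal N}\asymp_{m}1/Q$.

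The genuine obstacle is the uniform (in $t$, with $m$-dependence) cancellation estimate for $M_t(x)$: everything else is bookkeeping, and passing to absolute values makes the uniformity in $\alpha$ free. The delicate point is that $\mu_k(dt)^2$ couples $d$ multiplicatively with the fixed $t$ while $d$ is simultaneously pinned to the residue class $bt^{-1}\pmod m$ (so $M_t(Q/t)=0$ unless $\gcd(t,m)=1$); one must check that the Möbius cancellation survives both constraints with a saving uniform over all $t\le Q$, which is precisely the Korobov–Vinogradov-type input developed for the earlier theorems. Unlike Theorem~\ref{thm2}, the discrepancy bound needs only a $(\log Q)^{-1-\delta}$ saving to kill the logarithm, so the argument is unconditional, and the dependence of the implied constant on $m$ enters solely through this twisted Möbius estimate and through $\mathcal N\asymp_{m}Q^{2}$.
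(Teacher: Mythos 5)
Your route is the paper's route in all structural respects: the lower bound comes from the observation that every element of $\mathscr{F}_{Q,k}^{(m)}$ is at least $1/Q$, so $A(\alpha;\mathcal{N})=0$ just below $1/Q$; the upper bound comes from M\"obius inversion, discarding the oscillating factor (your $\psi(\alpha t)$, the paper's $\{t\alpha\}$) by taking absolute values, which reduces everything to $\sum_{t\leq Q}\bigl|M_t(Q/t)\bigr|\ll_m Q$, with the unconditional twisted M\"obius cancellation of Proposition \ref{mu} beating the logarithm and Proposition \ref{prop1} supplying $\mathcal{N}(Q,k,m)\asymp_m Q^2$. Uniformity in $\alpha$ is indeed free in both treatments.

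However, one step fails as literally written: your key input ``$M_t(x)\ll_m x/L(x)$ uniformly in $t$'' is not what the earlier machinery provides. Proposition \ref{mu}, applied with $d=t$, carries the divisor-dependent Euler factor $\prod_{p\mid t}\bigl(1+\frac{1}{\sqrt{p}-1}\bigr)$, and this factor is \emph{unbounded} in $t$: for $t$ a product of all primes up to $y$ it has size $\exp\bigl((2+o(1))\sqrt{\log t}/\log\log t\bigr)$. So no uniform-in-$t$ bound of the shape you assert is available, and your dyadic block estimate ``$\ll Q/\max(1,L(2^j))$'' silently drops this factor. The paper's remedy is to keep the factor and control it on average: after swapping the order of summation it invokes Proposition \ref{mu product}, which gives $\sum_{t\leq x}\mu_k(t)^2\prod_{p\mid t}\bigl(1+\frac{1}{\sqrt{p}-1}\bigr)\ll x$ (note $M_t\neq 0$ forces $t$ to be $k$-free, so the weight $\mu_k(t)^2$ may be inserted). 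Your dyadic skeleton survives exactly this repair: on the block $t\in(Q2^{-j-1},Q2^{-j}]$ each term is $\ll_m 2^{j}L(2^{j})^{-1}\prod_{p\mid t}\bigl(1+\frac{1}{\sqrt{p}-1}\bigr)$, and averaging the Euler factor over the block via Proposition \ref{mu product} yields a block contribution $\ll_m Q/L(2^{j})$, after which $\sum_j L(2^j)^{-1}$ converges as you intended, the $2^j=\BigO{1}$ blocks being trivial. So the approach is sound and essentially identical to the paper's, but your proof is incomplete without carrying the $\prod_{p\mid t}$ factor and adding the mean-value estimate of Proposition \ref{mu product} (or an equivalent); the ``uniform'' lemma you lean on is false as stated, even though its failure is only of size $t^{o(1)}$.
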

The next results concern the fine-scale or local distribution of $\left(\mathscr{F}_{Q,k}^{(m)}\right)_{Q\ge 1}$ via $\nu$-level correlations. Let $\nu\geq 2$ be an integer and let $\mathcal{F}$ be a finite set of $\mathscr{N}$ elements in the unit interval $[0,1]$. The $\nu$-level correlation measure $\mathcal{S}_{\mathcal{F}}^{(\nu)}(\mathfrak{B})$ of a box $\mathfrak{B}\subset \mathbb{R}^{\nu-1}$ is defined as follows:
\[\frac{1}{\mathscr{N}}\#\left\{(x_1,\ldots,x_{\nu})\in \mathcal{F}^{\nu}: x_i\ \text{distinct},\ (x_1-x_2,\ldots,x_{\nu-1}-x_{\nu})\in \frac{1}{\mathscr{N}}\mathfrak{B}+\mathbb{Z}^{\nu-1} \right\}.\numberthis\label{v-tuple}\] 
The $\nu$-level correlation measure of an increasing sequence $(\mathcal{F}_n)_n$, for every box $\mathfrak{B}\subset \mathbb{R}^{\nu-1}$, is given (if it exists) by
\[\mathcal{S}^{(\nu)}(\mathfrak{B})=\lim_{n\to \infty }\mathcal{S}_{\mathcal{F}_n}^{(\nu)}(\mathfrak{B}).\]
The measure $\mathcal{S}^{(2)}$ is called the pair correlation measure. If
\[\mathcal{S}^{(\nu)}(\mathfrak{B})=\int_{\mathfrak{B}}g_{\nu}(x_1,\ldots,x_{\nu-1})dx_1\cdots dx_{\nu-1}, \numberthis\label{g(x)}\]
then $\mathfrak{g}_{\nu}$ is called the $\nu$-level correlation function of $(\mathcal{F}_n)_n$, and for $\nu=2$, it is called the pair correlation function. The $\nu$-level correlation is said to be Poissonian if $g_{\nu}(x)\equiv 1$. Poissonian behaviour of these fine-scale statistics can be seen as a pseudorandomness
property, since a sequence $(X_n)_{n\ge 1}$ of independent, identically distributed random variables with uniform distribution on $[0, 1)$ will almost surely have Poissonian correlations. The fine-scale statistics have been studied from mathematical point of view by Rudnick and Sarnak \cite{MR1628282} and then by Rudnick et al. \cite{MR1839285}, by studying the spacings between the fractional parts of the sequence $(\alpha n^d)_{n\geq 1}$, for an integer $d\geq 2$ and for a given irrational number $\alpha$. Subsequently, numerous authors \cite{MR1793613, MR2186997, MR2018926} studied the local spacing statistics of various sequences modulo one by investigated their correlation measure. For more on the study of the fine-scale statistics of sequences modulo one, we refer the reader to \cite{Aistleitner, Munsch, Shubin, Technau}.

The $\nu$-level correlation of Farey fractions was studied in \cite{BocaF}, where the authors prove the existence of the function $\mathcal{S}^{(\nu)}(\mathfrak{B})$ and derive an explicit expression for the pair correlation function of $(\mathcal{F}_Q)_Q$, which is given by
\[g(\lambda)=\frac{6}{\pi^2\lambda^2}\sum_{1\leq k<\frac{\pi^2\lambda}{3}}\phi(k)\log\frac{\pi^2\lambda}{3k}.\numberthis\label{g(lambda)}\]
Subsequently, several authors \cite{Siskaki, Bchahal, Xiong, Zaharescu} have studied the pair correlation of Farey fractions with congruence constraints on the denominators. Further restrictions related to thin groups were examined by Lutsko in \cite{MR4458561}. In particular, in \cite{Chaubey}, the authors studied the pair correlation function for Farey fractions with square-free denominators. %\cite{Bchahal} studied the pair correlation measure of polynomial Farey fractions.
%\[\mathcal{S}(Q;m,l)=\left\{\frac{a}{q}\in \mathscr{F}_{Q,k}\ |\ q\equiv l\pmod{m} \right\}.\]

In the present article, we investigate if the $\nu$-level correlations of the sequence $\left(\mathscr{F}_{Q,k}^{(m)}\right)_{Q\ge 1}$ are Poissonian or not. Our primary aim is to compute the $\nu$-level correlation measure for all $\nu\ge 2$.  
To state our results, we first fix some notations and define certain one-to-one transformations. Let $A=(A_1,\ldots,A_{\nu-1}),\ B=(B_1,\ldots,B_{\nu-1})\in\mathbb{Z}_+^{\nu-1}$ such that $\gcd(A_j,B_j)=1$ for all $1\le j\le \nu-1$. For $\Lambda>0$ and $2\leq k, 1\leq m\in\mathbb{Z}$, we consider the one-to-one map defined as follows:
\[T_{A,B}(x,y)=\mathscr{C}(k,m)\left(\frac{B_1}{y(yA_1-xB_1)},\ldots,\frac{B_{\nu-1}}{y(yA_{\nu-1}-xB_{\nu-1})} \right),\numberthis\label{k63}\]
where 
\[\mathscr{C}(k,m)=\frac{1}{2\phi(m)L(k,\chi_0)}\prod_{p|m}\left(1-\frac{1}{p} \right)\prod_{\substack{p\\(p,m)=1}}\left(1-\frac{p^{k-1}-1}{p(p^k-1)}\right) \numberthis\label{C(k,m)}. \]
Here $\chi_0$ is the principal Dirichlet character modulo $m$. 
Let
\begin{align*}
  \Omega_{A,B,\Lambda,k}=&\left\{(x,y) : 0<x\leq y\leq 1,\ y\geq \frac{1}{\mathcal{C}{(\Lambda,k,m)}},\ 0< yA_j-xB_j\leq 1,\ \right. \\ &  \left.\Psi_k(yA_j-xB_j)=1 \text{ for all }1\le j\le\nu-1\right\},  
\end{align*}
where
\[
\Psi_k(\alpha)=\begin{cases}
 1,  &\mbox{if }\ \alpha\notin\mathbb{Z},\\
   1,  & \mbox{if}\ \alpha\in\mathbb{Z},\ \mu_k(\alpha)^2=1,\ \mbox{and}\ \alpha\equiv b\pmod{m}  , \\
  0,   & \mbox{otherwise.}
  %\ n\in\mathbb{Z}\ \mbox{and either}\ \mu_k(n)^2\ne 1\ \mbox{or}\ n\not\equiv b\pmod{m}.
\end{cases} \] and
$\mathcal{C}{(\Lambda,k,m)}=\frac{\Lambda}{\mathscr{C}(k,m)}$.
We define another map $T$ on $\mathbb{R}^{\nu-1}$ and its inverse $T^{-1}$ as follows:
\[T(x_1,\ldots,x_{\nu-1})=(x_1-x_2, x_2-x_3,\ldots, x_{\nu-2}-x_{\nu-1},x_{\nu-1}),\]
\[T^{-1}(x_1,\ldots,x_{\nu-1})=(x_1+\cdots+x_{\nu-1}, x_2+\cdots+x_{\nu-1},\ldots, x_{\nu-2}+x_{\nu-1},x_{\nu-1}). \]
We are now ready to state our result on the $\nu$-level correlations. 
\begin{thm}\label{v correlation}
    Let $\nu\geq 2, k\ge 2$ be integers. All $\nu$-level correlation measure of the sequence $\left(\mathscr{F}_{Q,k}^{(m)}\right)_{Q\geq 1}$ exist. For any box $\mathfrak{B}\subset(0,\Lambda)^{\nu-1}$, the $\nu$-level correlation measure is given by
\[ \mathcal{S}^{(\nu)}(\mathfrak{B})
   =\frac{6P_k(m)}{\pi^2\mathscr{C}(k,m)}\sum_{\substack{1\leq A_j\leq(\nu-1)\mathcal{C}^2{(\Lambda,k,m)}\\1\leq B_j\leq \nu\mathcal{C}^2{(\Lambda,k,m)}\\(A_j,B_j)=1}}\text{area}\left(\Omega_{A,B,\Lambda,k}\cap T_{A,B}^{-1}(T^{-1}\mathfrak{B}) \right), \]
 where 
 %\[\mathscr{C}(k,m)=\frac{1}{2\phi(m)L(k,\chi_0)}\prod_{p|m}\left(1-\frac{1}{p} \right)\prod_{\substack{p\\(p,m)=1}}\left(1-\frac{p^{k-1}-1}{p(p^k-1)}\right),\numberthis\label{C(k,m)}\]
 \[P_k(m)=\frac{1}{m}\prod_{p|m}\left(1-\frac{1}{p^2} \right)^{-1} \prod_{\substack{p\\(p,m)=1}}\left(1-\frac{1}{p^{k-1}(p+1)}\right). \]
\end{thm}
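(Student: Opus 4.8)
The plan is to adapt the base-point counting method developed in \cite{BocaF}, and adapted to square-free denominators in \cite{Chaubey}, to the present $k$-free and congruence-restricted sequence, reducing the whole computation to a weighted count of lattice points in a planar region whose main term is the claimed area. Write $\mathscr{N}=\mathcal{N}(Q,k,m)$ and recall that $\mathscr{N}\sim\mathscr{C}(k,m)Q^2$ (the leading-order count of the sequence). By \eqref{v-tuple} I must count ordered $\nu$-tuples of distinct fractions $(\gamma_1,\dots,\gamma_\nu)\in\left(\mathscr{F}_{Q,k}^{(m)}\right)^{\nu}$ for which the vector of consecutive differences lies in $\frac1{\mathscr{N}}\mathfrak{B}+\mathbb{Z}^{\nu-1}$. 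Singling out the last entry $\gamma_\nu=a/q$ as a base point and setting $u_j=\mathscr{N}(\gamma_j-\gamma_\nu)$ with $u=(u_1,\dots,u_{\nu-1})$, one checks that the scaled consecutive differences equal $T(u)$; since $\mathfrak{B}\subset(0,\Lambda)^{\nu-1}$ is bounded while $\mathscr{N}\to\infty$, up to the standard wrap-around contributions at the endpoints the box condition reduces to $T(u)\in\mathfrak{B}$, equivalently $u\in T^{-1}\mathfrak{B}$, which is the origin of the factor $T^{-1}\mathfrak{B}$ in the statement.

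Next I would attach arithmetic coordinates to the configuration. For $1\le j\le\nu-1$ put $B_j=a_jq-aq_j\neq0$, so that $\gamma_j-\gamma_\nu=B_j/(q_jq)$. Reducing $B_j$ modulo $q$ and using $\gcd(a,q)=1$ gives $q_j\equiv-\overline{a}\,B_j\pmod q$, whence $q_j=A_jq-\overline{a}\,B_j$ for a unique integer $A_j$, where $\overline{a}$ is the inverse of $a$ modulo $q$ taken in $(0,q]$. Writing $y=q/Q$ and $x=\overline{a}/Q$, so that $0<x\le y\le1$, one obtains $q_j/Q=A_jy-B_jx$ and, after inserting $\mathscr{N}\sim\mathscr{C}(k,m)Q^2$, $u_j=\mathscr{C}(k,m)\,B_j/\big(y(A_jy-B_jx)\big)$; that is, $u=T_{A,B}(x,y)$, so the box condition reads $(x,y)\in T_{A,B}^{-1}(T^{-1}\mathfrak{B})$. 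The coprimality $\gcd(a_j,q_j)=1$ translates into $\gcd(A_j,B_j)=1$; the requirement that each $q_j$ be $k$-free and $\equiv b\pmod m$ is recorded by $\Psi_k(A_jy-B_jx)=1$; and the inequalities $0<a_j\le q_j\le Q$ and $q\ge1$, together with the lower bound on $y$ forced by $u_j=O(\Lambda)$, cut out the region $\Omega_{A,B,\Lambda,k}$. Because $\mathfrak{B}\subset(0,\Lambda)^{\nu-1}$ and $q_j,q\le Q$, the constraints $y\ge1/\mathcal{C}(\Lambda,k,m)$, $0<A_jy-B_jx\le1$ and $x\le y$ bound $A_j$ and $B_j$ by the stated multiples of $\mathcal{C}^2(\Lambda,k,m)$, so the sum over $(A,B)$ is finite, and the map $(a,q)\mapsto\big(\gamma_\nu,(A_j,B_j)_j\big)$ being one-to-one makes the decomposition a genuine partition of the tuples.

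With the parametrization in place, for each admissible $(A,B)$ the correlation count equals $\frac1{\mathscr{N}}$ times a sum over base points $a/q\in\mathscr{F}_{Q,k}^{(m)}$ of the number of such points for which $(\overline{a}/Q,q/Q)\in\Omega_{A,B,\Lambda,k}\cap T_{A,B}^{-1}(T^{-1}\mathfrak{B})$ and all induced denominators $q_j=A_jq-\overline{a}B_j$ are simultaneously $k$-free and $\equiv b\pmod m$. I would evaluate this using the weighted lattice-point estimates underpinning Theorems \ref{Weyl sum} and \ref{thm1}: expanding each $k$-free indicator by $\mu_k(n)^2=\sum_{d^k\mid n}\mu(d)$ and detecting the residues by characters modulo $m$, the summation over $q$ and over $a$ (equivalently $\overline{a}$) coprime to $q$ produces a main term equal to the area of the region times an explicit local density. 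The coprimality conditions contribute the factor $6/\pi^2$, the joint $k$-free and congruence densities assemble into $P_k(m)$, and division by $\mathscr{N}\sim\mathscr{C}(k,m)Q^2$ supplies the prefactor $6P_k(m)/(\pi^2\mathscr{C}(k,m))$. As $Q\to\infty$ the points $(\overline{a}/Q,q/Q)$ equidistribute, so each inner count converges to $\mathrm{area}\big(\Omega_{A,B,\Lambda,k}\cap T_{A,B}^{-1}(T^{-1}\mathfrak{B})\big)$; summing over the finitely many $(A,B)$ then gives both the existence of $\mathcal{S}^{(\nu)}(\mathfrak{B})$ and the stated formula.

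The main obstacle is the uniform passage from the weighted lattice-point count to the area in the presence of the $k$-free weight. Unlike the plain Farey case, the main and error terms generated by the expansion $\mu_k(n)^2=\sum_{d^k\mid n}\mu(d)$ and by the character sums modulo $m$ must be estimated uniformly in $(A,B)$; this is exactly where the Dirichlet hyperbola method and the nontrivial twisted M\"{o}bius-sum bounds from the proof of Theorem \ref{beta2} enter, and keeping the total error $o(Q^2)$ uniformly over the growing but finite family of $(A,B)$ is the delicate point. Two further checks complete the argument: the distinctness constraint on the $x_i$ only discards tuples with a coincidence $\gamma_i=\gamma_{i'}$, which correspond to lower-dimensional diagonals of measure zero and do not affect the main term; and the boundary of $\Omega_{A,B,\Lambda,k}\cap T_{A,B}^{-1}(T^{-1}\mathfrak{B})$ has planar measure zero, so the area is continuous in the defining data and the equidistribution limit may be taken term by term. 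Assembling these estimates establishes the theorem.
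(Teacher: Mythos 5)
Your reduction is genuinely different from the paper's: you parametrize each tuple directly by a base point $\gamma_\nu=a/q$ and the unimodular correspondence $(a_j,q_j)\mapsto(A_j,B_j)$ (your identity $q_j=A_jq-\bar{a}B_j$ with $\gcd(A_j,B_j)=\gcd(a_j,q_j)$ is correct), whereas the paper smooths the correlation sum with a test function $H$, expands it in a Fourier series, evaluates the resulting exponential sums by Lemma \ref{exp nu}, applies Poisson summation in the $l$-variables, and only then arrives at the same region $\Omega_{A,B,\Lambda,k}$ and map $T_{A,B}$ via the substitutions $\Delta_j=ql_j-ad_j$, $e_j=(d_j+\bar{a}\Delta_j)/q$, $A_j=e_jn_j$, $B_j=\Delta_jn_j$, finishing with a two-sided approximation of the indicator of $\mathfrak{B}$ by smooth $H$. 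Both routes converge to the same weighted lattice-point count, so your geometric shortcut is legitimate in principle.

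The genuine gap is in your main-term bookkeeping. After your reduction, the inner count for fixed $(A,B)$ runs over integer points $(\bar{a},q)$ subject \emph{simultaneously} to $\gcd(\bar{a},q)=1$, $q$ being $k$-free with $q\equiv b\pmod m$, and the $\nu-1$ conditions that each induced denominator $q_j=A_jq-\bar{a}B_j$ be $k$-free and $\equiv b\pmod m$. You assert that ``the joint $k$-free and congruence densities assemble into $P_k(m)$'' and that equidistribution makes each inner count converge to $6P_k(m)/\pi^2$ times the area. That cannot be right as stated: $P_k(m)$ is precisely the single-variable density produced by Lemma \ref{key lemma1} from the weight $\mu_k(q)^2$ and the constraint $(\bar{a}m,q)=1$ on the base denominator alone; it carries no contribution from the $\nu-1$ linear-form conditions, and any genuine joint density would depend on $(A_j,B_j)$ --- compare the paper's own pair-correlation computation for Theorem \ref{main result}, where the analogous two-variable count via Lemma \ref{key lemma} yields the parameter-dependent factors $P_{m,m}^{k}(d_1\delta,d_2\delta)$ rather than a single constant. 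The paper never converts the induced-denominator conditions into a density at all: they travel through the $M_q$/M\"{o}bius decomposition coming from Lemma \ref{exp nu} and are \emph{retained} as the $\Psi_k$-constraints built into $\Omega_{n,e,k,\Delta}$ and ultimately into $\Omega_{A,B,\Lambda,k}$ appearing in the final area, while the lattice count is evaluated by Lemma \ref{key lemma1} for the principal character and Proposition \ref{prop15} shows the non-principal contribution is negligible. Your plan, by contrast, needs (and does not supply) a multi-linear-form analogue of Lemma \ref{key lemma1} handling $k$-free weights on all the forms $A_jq-\bar{a}B_j$ uniformly in $(A,B)$; invoking the Dirichlet hyperbola method and Proposition \ref{mu} from the proof of Theorem \ref{beta2} does not serve here, since those are one-variable twisted M\"{o}bius estimates, whereas the tool the argument actually rests on is the two-dimensional weighted counting of Lemmas \ref{key lemma} and \ref{key lemma1} together with the character-sum bound of Proposition \ref{prop15}. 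Until you either prove such a joint-count lemma (whose $(A,B)$-dependent densities would alter your claimed constant) or reorganize the argument as the paper does, so that the induced-denominator conditions are carried inside the region rather than absorbed into a density, the derivation of the stated formula is incomplete.
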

\begin{remark}
    Recall that for the $\nu$-level correlation to be Poissonian, we must have $\mathcal{S}^{(\nu)}(\mathfrak{B})=\text{vol}(\mathfrak{B})$ for all boxes $\mathfrak{B}$. Using the above expression, we observe that for the sequence $(\mathscr{F}_{Q,k}^{(m)})_{Q\ge 1}$, does not have Poissonian $\nu$-level correlations for all $\nu\ge2$. Since, let $\Lambda>0$ be a real number such that $(\Lambda)^{-3(\nu-1)}>6\pi^{-2} 2^{\nu-1}(\nu(\nu-1))^{\nu-1}P_k(m)(\mathscr{C}(k,m))^{-4\nu+3}$, and let $\mathfrak{B}=(0,\Lambda/2]^{\nu-1}$, then clearly $\mathcal{S}^{(\nu)}(\mathfrak{B})<\text{vol}(\mathfrak{B})$. 
    %Therefore, the $\nu$-level correlations of the sequence $\left(\mathscr{F}_{Q,k}^{(m)}\right)_{Q\geq 1}$ are not Poissonian. %For $\nu=2,3$, let $\Lambda>0$ be such that $\Lambda^{-1}>6\pi^2 2^{\nu-1}(\nu-1)(2\nu-1)(\mathscr{C}(k,m))^{-3}$ and if $\mathfrak{B}=(0,\Lambda/2]^{\nu-1}$ for $\nu=2$ and $\mathfrak{B}=(0,\sqrt{\Lambda}]^{\nu-1}$ for $\nu=3$, then $\mathcal{S}^{(\nu)}(\mathfrak{B})<\text{vol}(\mathfrak{B})$.
    \end{remark}
    \begin{remark}
Particularly relevant to the work in this paper is the work of \cite{BocaF}, where the authors establish the $\nu$-level correlation for $(\mathscr{F}_Q)_{Q\ge 1}$. They reduce the problem of counting the \(\nu\)-tuple described in \eqref{v-tuple} to estimating an exponential sum.
This is achieved by expressing the Fourier series for the smooth real-valued function \(H\) with support contained in \(\mathfrak{B}\). Furthermore, they rewrite the exponential sum in terms of a M\"{o}bius sum and utilize the Poisson summation formula for the coefficients of the Fourier series. Given that the support of \(H\) is contained within \(\mathfrak{B}\), several changes of variables lead to the formulation of the \(\nu\)-level correlation measure.
In our case, however, the key difference lies in establishing estimates for weighted lattice point counting and deducing a formula for the exponential sum over Farey fractions whose denominators are \(k\)-free and lie within an arithmetic progression. As a result, the principal Dirichlet character yields the correlation measure, while for the non-principal character, we provide an estimate for the character sum twisted by a continuously differentiable function and the characteristic function for the \(k\)-free numbers. By applying this result, the sum over non-principal characters approaches zero as \(Q \to \infty\). 
\end{remark}
Our final result gives an explicit form for the pair \((\nu = 2)\) correlation measure of the sequence $\left(\mathscr{F}_{Q,k}^{(m)}\right)_{Q \geq 1}$.
\begin{thm}\label{main result}
The pair correlation function of the sequence $\left(\mathscr{F}_{Q,k}^{(m)}\right)_{Q\ge 1}$ exists and is given by
 \[\mathfrak{g}_{m,k}(\lambda)=\frac{6}{\lambda^2\pi^2\phi^2(m)}\sum_{1\leq n<\frac{\lambda}{\mathscr{C}(k,m)}}F_k(n)\log\left(\frac{\lambda}{n \mathscr{C}(k,m)} \right)\ \numberthis\label{g}\]
for any $\lambda\geq 0$, where $\mathscr{C}(k,m)$ is as in \eqref{C(k,m)}, and 
%\[\mathscr{C}(k,m)=\frac{1}{2\phi(m)L(k,\chi_0)}\prod_{p|m}\left(1-\frac{1}{p} \right)\prod_{\substack{p\\(p,m)=1}}\left(1-\frac{p^{k-1}-1}{p(p^k-1)}\right),\]
 \begin{align*}
    F_k(n)=\sum_{\substack{\delta d_1d_2r=n\\(d_1d_2\delta,m)=1}}r\mu_k(\delta)^2\mu(d_1)\mu(d_2)\prod_{\substack{p\\(p,m)=1}}\left(1-\frac{\gcd(p^k,d_2\delta)}{p^{k-1}(p+1)} \right)\prod_{\substack{p\\(p,m)=1}}\left(1-\frac{\gcd(p^k,d_1\delta)}{p^k+p^{k-1}-\gcd(p^k,d_2\delta)} \right).
 \end{align*}

 \end{thm}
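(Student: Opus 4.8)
The plan is to treat Theorem \ref{main result} as the explicit evaluation of the $\nu=2$ instance of the correlation machinery, recovering the density from the cumulative measure via $\mathfrak{g}_{m,k}(\lambda)=\frac{d}{d\lambda}\mathcal{S}^{(2)}\big((0,\lambda)\big)$. Since for $\nu=2$ the maps $T$ and $T^{-1}$ reduce to the identity on $\mathbb{R}$, the natural starting point is the normalized count of ordered pairs $a/q,\,a'/q'\in\mathscr{F}_{Q,k}^{(m)}$ whose normalized difference $\mathcal{N}(Q,k,m)\,|a/q-a'/q'|$ does not exceed $\lambda$ modulo one. Writing $h=a'q-aq'$ for the (nonzero integer) determinant and using $\mathcal{N}(Q,k,m)\sim\mathscr{C}(k,m)\,Q^2$, the gap condition becomes $qq'\ge\mathscr{C}(k,m)Q^2h/\lambda$. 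The index $h$ is precisely the parameter that will become the summation variable $n$, and the cutoff $n<\lambda/\mathscr{C}(k,m)$ reflects exactly the requirement that this inequality be compatible with $q,q'\le Q$.

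First I would fix $h$ and the denominators $q,q'$ and count admissible numerators. A solution of $a'q-aq'=h$ in the Farey ranges with $\gcd(a,q)=\gcd(a',q')=1$ exists only when $\gcd(q,q')\mid h$, and the number of such pairs is then governed by $\gcd(q,q')$; this is the source of the factor $r$ appearing in $F_k(n)$. Setting $r=\gcd(q,q')$, $q=rq_1$, $q'=rq_2$ with $\gcd(q_1,q_2)=1$, I would impose the three remaining arithmetic conditions: that $q$ and $q'$ be $k$-free and that $q\equiv q'\equiv b\pmod m$. The two independent congruences mod $m$ are what produce the factor $1/\phi^2(m)$.

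Next I would separate the geometric and arithmetic contributions. Expanding each $k$-free indicator through $\mu_k(N)^2=\sum_{d^k\mid N}\mu(d)$ and isolating the prime part shared by $q$ and $q'$ (the variable $\delta$) from the parts attached to $q$ and $q'$ individually (the variables $d_1,d_2$), the sum over $q,q'$ becomes a weighted lattice-point count in the scaled square $\{(q/Q,q'/Q)\}$. Passing to the limit $Q\to\infty$, the continuous part integrates to $\int \mathrm{d}y/y\sim\log\!\big(\lambda/(n\mathscr{C}(k,m))\big)$, which is the logarithmic factor in \eqref{g}, while the residues of the $\mu$- and $\mu_k$-weighted divisor sums assemble, prime by prime, into local densities. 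For primes $p\mid m$ the $k$-free condition and the fixed residue interact and are absorbed into $\mathscr{C}(k,m)$ and the normalization; for primes $p$ coprime to $m$ one obtains the two Euler products in $F_k(n)$, the $\gcd(p^k,\cdot)$ terms recording the overlap between the imposed $k$-th power divisors and the local denominator structure. Collecting everything and differentiating in $\lambda$ yields \eqref{g}.

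The main obstacle is the explicit evaluation of the singular series $F_k(n)$: computing, for each prime, the local count of pairs of $k$-free denominators in the progression with prescribed greatest common divisor and with $\gcd(q,q')\mid h$, and then verifying that these local factors multiply out exactly to the stated closed form. This requires careful bookkeeping of the convolution variables $\delta,d_1,d_2,r$ and of the two regimes $p\mid m$ versus $(p,m)=1$. A secondary technical point, handled as in the proof of Theorem \ref{v correlation}, is controlling the error terms: the contribution of the non-principal Dirichlet characters must be shown to vanish as $Q\to\infty$ (via the twisted M\"obius and character-sum estimates used there together with the Weyl-type bound of Theorem \ref{Weyl sum}), and the lattice-point remainder must be $o(1)$ uniformly, so that both the limit defining $\mathcal{S}^{(2)}$ and the differentiation in $\lambda$ are rigorously justified.
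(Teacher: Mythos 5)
Your overall strategy --- a direct geometric count of pairs via the determinant $h=a'q-aq'$, followed by sieving and a weighted lattice-point count in the hyperbolic region $qq'\ge \mathscr{C}(k,m)Q^2h/\lambda$ --- is a legitimate alternative to the paper's route, and several structural features are correctly anticipated: the dictionary $n=h$ with cutoff $n<\lambda/\mathscr{C}(k,m)$, the factor $1/\phi^2(m)$ from the two congruence conditions, the logarithm from integrating $dy/y$ over the region, and the need to kill the non-principal characters. The paper itself works on the harmonic side instead: it Fourier-expands the pair-counting test function, evaluates the resulting exponential sums over $\mathscr{F}_{Q,k}^{(m)}$ in closed form (Lemma \ref{exp nu}), applies Poisson summation to the Fourier coefficients via \eqref{k36}, extracts $\delta=\gcd(q_1,q_2)$ as in \eqref{k51}, and only then invokes character orthogonality, Lemma \ref{key lemma} for the principal character and Proposition \ref{prop15} for the rest. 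In that derivation the convolution structure $n=\delta d_1d_2r$ falls out mechanically, and the linear weight $r$ comes from the area (Jacobian) of the lattice-point region, not from counting numerators.

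That said, your proposal has a genuine gap, and it is not merely the deferred ``bookkeeping'': the roadmap you give for producing $F_k(n)$ contains misidentifications that would derail the computation. You attribute $d_1,d_2$ to the expansion $\mu_k(N)^2=\sum_{d^k\mid N}\mu(d)$; but variables from that sieve enter through $k$-th powers and never appear as linear factors of $n$ --- in the correct derivation they are absorbed into the Euler products, which is precisely where the $\gcd(p^k,d_i\delta)$ terms originate (inside Lemma \ref{key lemma}). In the actual formula, $d_1,d_2$ multiply linearly into $n=\delta d_1d_2r$ and carry $\mu(d_1)\mu(d_2)$ because they come from the M\"obius sieve over $\gcd(a_i,q_i)$ enforcing coprimality of numerator and denominator --- exactly the sieve you need when counting solutions of $a'q-aq'=h$ with $\gcd(a,q)=\gcd(a',q')=1$, a count you assert is ``governed by $\gcd(q,q')$'' but which is only that simple for unrestricted numerators. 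Likewise, identifying the linear weight $r$ with $\gcd(q,q')$ clashes with the target formula, where the gcd-type variable is $\delta$ and carries $\mu_k(\delta)^2$, while $r$ indexes the gap (the Poisson-dual variable). A sanity check in the classical case shows what your singular-series computation must produce: with the Euler products trivialized, $\sum_{\delta d_1d_2r=n}r\,\mu(d_1)\mu(d_2)=(1*\mu*\mu*\mathrm{id})(n)=\phi(n)$, recovering \eqref{g(lambda)}; your sketch provides no mechanism that reproduces this convolution, let alone its $k$-free, progression-twisted refinement with the coupled factors $\bigl(1-\gcd(p^k,d_2\delta)/(p^{k-1}(p+1))\bigr)$ and $\bigl(1-\gcd(p^k,d_1\delta)/(p^k+p^{k-1}-\gcd(p^k,d_2\delta))\bigr)$. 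Since the explicit form of $F_k(n)$ is the entire content of the theorem, the proposal as written is a plausible plan but not a proof. (A minor point: working with a smooth test function $H$ directly yields $S^{(2)}_{Q,k}/\mathcal{N}(Q,k,m)\to\int_0^\Lambda H(\lambda)\mathfrak{g}_{m,k}(\lambda)\,d\lambda$, so no differentiation of the cumulative measure is needed or should be invoked.)
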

%\[F_k(n)=n\prod_{\substack{p^2|n\\(p,m)=1}}\left(1-\frac{p^k-p^{k-1}+p^{k-2}+p-2}{p(p^k+p^{k-1}-2)} \right)\prod_{\substack{p|n, p^2\nmid n\\(p,m)=1}}\left(1-\frac{p^k+p-2}{p(p^k+p^{k-1}-2)} \right) . \]
\begin{figure}[ht]
\centering
\subfloat{
\includegraphics[width=8cm, height=5cm]{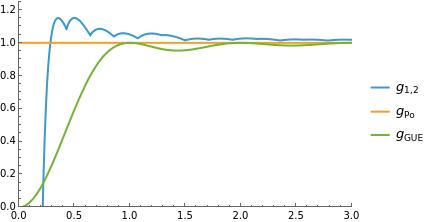}}
\subfloat{\includegraphics[width=8cm, height=5cm]{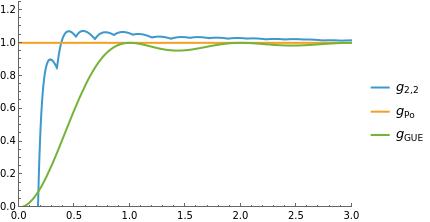}}
\caption{The graphs of pair correlation functions $\mathfrak{g}_{1,2}(\lambda), \mathfrak{g}_{2,2}(\lambda), g_{Po}(\lambda)\equiv 1$ and $g_{GUE}(\lambda)=1-\left(\frac{\sin \pi\lambda}{\pi\lambda} \right)^2$. }
\end{figure} 
\begin{remark}
    A key distinction in the argument presented for $k$-free versus square-free pair correlation measure is due to the following observation: If $n_1n_2$ is square-free then $(n_1,n_2)=1$. However, for $k\geq 3$, if $n_1n_2$ is $k$-free then $n_1$ and $n_2$ may or may not be coprime. As a result, the characteristic function for the $k$-free numbers $\mu_k(n_1n_2)^2$ cannot be separated when $k\geq 3$. This complexity necessitates a more careful analysis when establishing an asymptotic formula for counting weighted lattice points that satisfy specific coprimality conditions and k-free restrictions.
\end{remark}
\subsection{Notation}
For function $f,g :X\rightarrow\mathbb{R}$, defined on some set $X$, we write $f\ll g$ (or $\BigO{g(x)}$) to denote that there exists a constant $C>0$ such
that $|f(x)|\le C|g(x)|$ for all $x\in X$, with dependence on parameters denoted by subscripts. Moreover, let $f(x)\asymp g(x)$ denote that there exist constants $C_1$ and $C_2$ such that $C_1g(x)\leq f(x)\leq C_2g(x)$. The symbol $(a,b)=1$ denotes that $a$ and $b$ are coprime. We write $e(t)=\exp{(2\pi it)}$, and $\epsilon>0$ stands for an arbitrarily small positive real number. We denote for $x=(x_1,\ldots,x_n),\ y=(y_1,\ldots,y_n)\in\mathbb{R}^n,\ x\cdot y=x_1y_1+\cdots+x_ny_n$. The symbols $\zeta(s)$ and $L(s, \chi)$ denote the Riemann zeta and the
Dirichlet L-function for the Dirichlet character $\chi$, respectively. We denote by $\lfloor x\rfloor$ the greatest integer less than or equal to $x$.
\subsection{Acknowledgments}
The first author acknowledges support from the University Grants Commission, Department of Higher
Education, Government of India, under NTA Ref. no. 191620135578. The research conducted by the second and third authors is partially funded by core research grants CRG/2023/000804 and CRG/2023/001743 from the ANRF, formerly known as the Science and Engineering Research Board of the Department of Science and Technology (DST), Government of India.
\section{Preliminaries}
In this section, we establish results that will be crucial in proving our main results. 

\subsection{Cardinality of the set $\mathscr{F}_{Q,k}^{(m)}$} We begin with estimating the cardinality $\mathcal{N}(Q,k,m)$ of the set $\mathscr{F}_{Q,k}^{(m)}$. 

\begin{prop}\label{prop1}
  Let $m$ and $b$ be positive integers. Then, we have 
  %\[\mathcal{S}_k(Q;m,b)=\left\{\frac{a}{n}\in \mathfrak{F}_{Q,k}\ |\ n\equiv b\pmod{m} \right\}\] then we have
   \[\mathcal{N}(Q,k,m)=Q^2\mathscr{C}(k,m)+\BigOm{Q^{\frac{2(2k-1)}{3k-2}}\exp{\left(-c\frac{(\log Q)^{3/5}}{(\log\log Q)^{1/5}} \right)}}, \]
   where $c>0$ is some constant and $\mathscr{C}(k,m)$ is as in \eqref{C(k,m)}.
   %\[\mathscr{C}(k,m)=\frac{1}{2\phi(m)L(k,\chi_0)}\prod_{p|m}\left(1-\frac{1}{p} \right)\prod_{\substack{p\\(p,m)=1}}\left(1-\frac{p^{k-1}-1}{p(p^k-1)}\right). \]
\end{prop}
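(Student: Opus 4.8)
The plan is to reduce the cardinality to a weighted twisted Möbius sum, extract the main term from a convergent Euler product, and relegate the oscillation to the zero-free region of Dirichlet $L$-functions. First I would record that $\mathcal{N}(Q,k,m)=\sum_{q\le Q,\,q\equiv b\,(m)}\phi(q)\mu_k(q)^2$ and open the totient through $\phi(D)=\sum_{n\mid D}\mu(n)D/n$. Writing $D=nq$ with $q=D/n$ playing the role of the cofactor, this rearranges into
\[\mathcal{N}(Q,k,m)=\sum_{q\le Q}q\,M_q\!\left(\tfrac{Q}{q}\right),\qquad M_q(x)=\sum_{\substack{n\le x\\ nq\equiv b\,(m)}}\mu(n)\mu_k(nq)^2,\]
which is exactly the object appearing in Theorem \ref{beta2}. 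Detecting the congruence $nq\equiv b\,(m)$ by the orthogonality relation $\tfrac1{\phi(m)}\sum_{\chi\bmod m}\bar\chi(b)\chi(nq)$ then splits $\mathcal N$ into a principal-character part and a sum over non-principal characters.

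The main term comes entirely from the principal character $\chi_0$. Here I would detect the $k$-free condition via $\mu_k(nq)^2=\sum_{d^k\mid nq}\mu(d)$ and interchange the order of summation, so that the principal part becomes a triple sum over the cofactor $q$, the totient-divisor $n$ (squarefree), and the $k$-th-power variable $d$ (squarefree). Evaluating the free cofactor by $\sum_{q\le y}q=\tfrac{y^2}{2}+\BigO{y}$ (restricted to the arithmetic progression forced by $d^k\mid nq$ and $nq\equiv b\,(m)$), and then letting the two Möbius-type variables run to infinity, produces a product of local densities; collecting the Euler factors at primes dividing $m$ separately from those coprime to $m$ should reproduce precisely the constant $\mathscr{C}(k,m)$ of \eqref{C(k,m)}, giving the leading term $Q^2\mathscr{C}(k,m)$. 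The coprimality bookkeeping here is delicate because, as the closing remark of the introduction notes, for $k\ge 3$ the factor $\mu_k(nq)^2$ does not split across $n$ and $q$; one must track $\gcd(p^k,\cdot)$ at each prime, and this is the source of the intricate local factors.

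For the error term I would truncate the $k$-th-power variable $d$ at a level $y$ and treat the two resulting ranges by the Dirichlet hyperbola method in the two complementary orderings of $(n,q)$. In the short range one bounds the leftover fractional parts from $\sum q$ trivially, which costs a positive power of $y$; in the long range one invokes the non-trivial bounds for the twisted Möbius sums $\sum_{n\le t}\mu(n)\chi(n)\ll_m t\exp\!\left(-c(\log t)^{3/5}(\log\log t)^{-1/5}\right)$, valid for all $\chi\bmod m$ by the Korobov–Vinogradov zero-free region for $\zeta$ and for the Dirichlet $L$-functions $L(s,\chi)$, applied through partial summation to the tails of the corresponding Dirichlet series. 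Balancing $y$ against these two estimates yields the polynomial factor $Q^{2(2k-1)/(3k-2)}$ together with the exponential saving, and summing over the $\phi(m)$ characters produces the dependence on $m$ in the implied constant.

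The hardest part will be this error analysis, and specifically obtaining the exponential saving uniformly. Two points require care. First, the saving $\exp(-c(\log t)^{3/5}\cdots)$ degenerates when its argument $t$ is small, i.e. when the cofactor $q$ is close to $Q$, so the hyperbola split must be arranged so that the Möbius cancellation is only invoked on ranges where $\log t\asymp\log Q$; the contribution of the remaining ranges must be shown to be dominated by the main term's size. Second, the $k$-free coupling between $n$ and $q$ blocks a clean multiplicative separation for $k\ge 3$, forcing one to carry the $\gcd(p^k,\cdot)$ conditions through the hyperbola estimate and to optimize the truncation level $y$ in their presence. It is this combined optimization—rather than the cleaner unweighted $k$-free count—that fixes the exponent $\tfrac{2(2k-1)}{3k-2}$.
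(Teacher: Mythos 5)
Your decomposition $\mathcal{N}(Q,k,m)=\sum_{q\le Q}qM_q\left(\frac{Q}{q}\right)$ is legitimate (it is the paper's Lemma \ref{f(gamma)} with $f\equiv 1$), and your main-term extraction would indeed reproduce $\mathscr{C}(k,m)$; but your route is genuinely different from the paper's, and the difference matters for the error term. The paper proves Proposition \ref{prop1} in one stroke: after the orthogonality step it forms the Dirichlet series $F(s)=\sum_n\chi(n)\phi(n)\mu_k(n)^2n^{-s}=\frac{L(s-1,\chi)}{L(ks-k,\chi^k)}\prod_p\left(1+\frac{\chi(p^k)-\chi(p)p^{(k-1)(s-1)}}{p(p^{k(s-1)}-\chi(p^k))}\right)$, applies Perron at $\alpha=2+1/\log Q$, and shifts the contour to $\Re(s)=\beta=1+\frac{1}{k}-c(\log T)^{-2/3}(\log\log T)^{-1/3}$ inside the Vinogradov--Korobov zero-free region; the residue at $s=2$ gives $Q^2\mathscr{C}(k,m)$, and the exponent $\frac{2(2k-1)}{3k-2}$ arises from balancing the Perron truncation error $Q^2\log T/T$ against the vertical-segment bound $Q^{\beta}T^{\frac32-\beta}(\log T)^2$, i.e.\ from optimizing $T\approx Q^{\frac{2(k-1)}{3k-2}}$ --- not, as you assert, from balancing a truncation level $y$ in the $k$-free detector.

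The genuine gap is in your error analysis, and it is concentrated at $k=2$. In your scheme, each pair $(n,d)$ with $n$ a totient divisor and $d\le y$ the truncated detector contributes a progression remainder of size $O(Q/n)$ from $\sum_{q\le x,\ q\equiv a\!\pmod{M}}q=\frac{x^2}{2M}+O(x)$, so "bounding the fractional parts trivially" costs $\gg Qy$, while the discarded tail $d>y$ costs $\gg Q^2y^{1-k}$; no choice of $y$ beats $Q^{1+\frac1k}$, and the trivial bound carries no exponential saving. For $k\ge 3$ this is harmless --- indeed stronger than claimed --- since $1+\frac1k<\frac{2(2k-1)}{3k-2}$ with a positive polynomial gap that absorbs any factor $\exp\left(-c\frac{(\log Q)^{3/5}}{(\log\log Q)^{1/5}}\right)$ (which is $\gg Q^{-\epsilon}$). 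But at $k=2$ one has $1+\frac1k=\frac{2(2k-1)}{3k-2}=\frac32$ exactly, and $Q^{3/2}$ (in fact $Q^{3/2}\log Q$ after summing over $d\le\sqrt Q$) is \emph{not} $\BigOm{Q^{3/2}\exp\left(-c\frac{(\log Q)^{3/5}}{(\log\log Q)^{1/5}}\right)}$: your trivial treatment fails the proposition precisely in the case that receives the most attention elsewhere in the paper. Repairing it requires Walfisz-strength (Vinogradov--Korobov) savings inside the $k$-free-in-progression remainders themselves --- i.e.\ non-trivial treatment of exactly the terms you propose to bound trivially --- at which point you have re-derived the analytic machinery the paper deploys once through Perron. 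A secondary point: the cancellation input you quote, $\sum_{n\le t}\mu(n)\chi(n)\ll_m t\exp\left(-c\frac{(\log t)^{3/5}}{(\log\log t)^{1/5}}\right)$, does not literally apply to your long-range sums, which carry the coupled weight $\mu_k(nq)^2$; you need the coupled version with explicit dependence on $q$ (the paper's Proposition \ref{mu}, whose $\prod_{p|q}$ factors must then be summed via Proposition \ref{mu product}), and partial summation from the uncoupled bound is not a substitute.
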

\begin{proof}
%We begin with writing $\mathcal{N}(Q,k,m)$ in summation form
     %\[\mathcal{N}(Q,k,m)=\sum_{\substack{n\leq Q\\n\equiv l\pmod{m}}}\mu_k(n)^2\sum_{\substack{a\leq n\\\gcd(a, n)=1}}1.\numberthis\label{I3}\]
For fixed positive integers $m$ and $b$ with $(m,b)=1$, in view of the identity
\[\frac{1}{\phi(m)}\sum_{\chi\pmod{m}}\chi(n\bar{b})=\left\{\begin{array}{cc}
   1  & \mbox{if} \ n\equiv b\pmod{m},\\
  0   & \mbox{otherwise}, 
\end{array}\right. \numberthis\label{k67} \]
where $\bar{b}$ is such that $b\bar{b}\equiv 1\pmod{m}$, and by the definition of $\mathcal{N}(Q,k,m)$, we have 
%The sum in \eqref{I3} can be written as
\begin{align*}
\mathcal{N}(Q,k,m)
%&=\frac{1}{\phi(m)}\sum_{\chi\pmod m}\bar{\chi}(l)\sum_{n\leq Q}\chi(n)\mu_k(n)^2\sum_{\substack{a\leq n\\\gcd(a,n)=1}}1 \\
&=\frac{1}{\phi(m)}\sum_{\chi\pmod m}\bar{\chi}(b)\sum_{n\leq Q}\chi(n)\phi(n)\mu_k(n)^2.\numberthis\label{I7}
\end{align*}
The Dirichlet series of $\chi(n)\phi(n)\mu_k(n)^2$ is given by
\begin{align*}
    F(s)&=\sum_{n=1}^{\infty}\frac{\chi(n)\phi(n)\mu_k(n)^2}{n^s}
    %=\prod_{p}\left(1+\frac{\chi(p)(p-1)}{p^s}+\frac{\chi(p^2)p(p-1)}{p^{2s}}+\cdots+\frac{\chi(p^{k-1}p^{k-1}(p-1)}{p^{(k-1)s}} \right)\\
    %&=\prod_{p}\left\{1+\frac{\chi(p)(p-1)}{p^s}\left(1+\frac{\chi(p)}{p^{s-1}}+\cdots+\frac{\chi(p^{k-2})}{p^{(k-2)(s-1)}}\right) \right\}\\
    %&=\prod_{p}\left\{1+\frac{\chi(p)(p-1)}{p^s}\cdot\frac{1-\frac{\chi(p^{k-1})}{p^{(k-1)(s-1)}}}{1-\frac{\chi(p)}{p^{(s-1)}}} \right\}\\
    %&=\prod_{p}\left(1-\frac{\chi(p)}{p^{(s-1)}} \right)\prod_{p}\left(1-\frac{\chi(p)}{p^{(s-1)}}+\frac{\chi(p)(p-1)}{p^s}-\frac{\chi(p^{k})(p-1)}{p^{ks-(k-1)}} \right)\\
    %&=L(s-1,\chi)\prod_p\left(1-\frac{\chi(p^k)}{p^{ks-k}}-\frac{\chi(p)}{p^s}+\frac{\chi(p^k)}{p^{ks-(k-1)}} \right)\\
    %&=L(s-1,\chi)\prod_p\left(1-\frac{\chi(p^k)}{p^{ks-k}} \right)\prod_p\left(1+\frac{\chi(p^k)-\chi(p)p^{(k-1)(s-1)}}{p^{ks-(k-1)}}\cdot\frac{p^{k(s-1)}}{p^{k(s-1)}-\chi(p^k)} \right)\\
    =\frac{L(s-1,\chi)}{L(ks-k,\chi^k)}\prod_p\left(1+\frac{\chi(p^k)-\chi(p)p^{(k-1)(s-1)}}{p(p^{k(s-1)}-\chi(p^k))} \right),\numberthis\label{k19}
\end{align*}
which is absolutely convergent for $\Re(s)>2$ and has an analytic continuation to the half-plane $\Re(s)>1$ except for a simple pole at $s=2$ when $\chi=\chi_0$. For some fixed $\alpha=2+1/\log Q$ and the Dirichlet series $F(s)$, we apply Perron's formula (\cite{Tenenbaum}, Theorem 2, p. 132)
\[\sum_{\substack{n\leq Q}}\chi(n)\phi(n)\mu_k(n)^2=\frac{1}{2\pi i}\int_{\alpha-iT}^{\alpha+iT}F(s)\frac{Q^s}{s}ds+\BigO{R(T)}, \numberthis\label{k20}\]
    where
    \[R(T)\ll\frac{Q^\alpha}{T}\sum_{n=1}^\infty\frac{1}{n^{\alpha-1}|\log x/n|}\ll\frac{Q^2\log Q}{T}. \]
We use the Vinogradov-Korobov zero-free region for the Dirichlet $L$-functions modulo $m$ (see \cite{MR4732955}, Theorem 1.1) to estimate the integral in \eqref{k20}. We shift the line integral to the left of the line $\Re(s)=\alpha$, thereby replacing it by a rectangular contour with vertices $\alpha\pm iT$ and $\beta\pm iT$, where $\beta=1+1/k-c/(\log T)^{2/3}(\log\log T)^{1/3}$. 

Case-I: We first consider the principal Dirichlet character $\chi=\chi_0$. Since the integrand in \eqref{k20} is holomorphic on and within this contour except for a pole at $s=2$. Thus, by Cauchy's residue theorem, we have
\[\frac{1}{2\pi i}\int_{\alpha-iT}^{\alpha+iT}F(s)\frac{Q^s}{s}ds=\frac{Q^2}{2L(k,\chi_0)}\prod_{p|m}\left(1-\frac{1}{p} \right)\prod_{\substack{p\\(p,m)=1}}\left(1-\frac{p^{k-1}-1}{p(p^k-1)}\right)+\sum_{j=1}^3I_j, \]
where $I_1$ and $I_3$ are integrals along the horizontal segments $[\alpha-iT,\beta-iT]$ and $[\beta+iT, \alpha+iT]$, respectively and $I_2$ is defined as the integral along the vertical segment $[\beta-iT, \beta+iT]$. In order to estimate the integrals $I_j$'s, we use the standard bounds for $\zeta(s)$ provided in \cite[page 47]{MR882550}, modulo multiplication by constant depending on $m$. Therefore,
\begin{align*}
    I_1, I_3&\ll_{m} \int_{\beta}^{\alpha}\frac{Q^{\sigma}|\zeta(\sigma-1+iT)|d\sigma}{|\sigma+iT||\zeta(k\sigma-k+ikT)|}
    %&\ll_{m,k} \log T \left(\int_{\beta}^2\frac{Q^{\sigma}|\zeta(\sigma-1+iT)|}{|\sigma+iT|}d\sigma + \int_2^{\alpha}\frac{Q^{\sigma}|\zeta(\sigma-1+iT)|}{|\sigma+iT|}d\sigma\right)\\
    \ll_{m} \frac{(\log T)^2}{T}\left(\int_{\beta}^2Q^{\sigma}T^{1-\frac{\sigma}{2}}d\sigma + \int_2^{\alpha}Q^{\sigma}d\sigma\right)\ll_{m} \frac{Q^2(\log T)^2}{T}.
\end{align*}
Next, we estimate the integral $I_2$ using the mean value estimate for $\zeta(s)/s$ \cite[Proposition 2.1]{Bittu}. 
\begin{align*}
    I_2&\ll_{m} Q^{\beta}\int_0^T\frac{|\zeta(\beta-1+it)|}{|\beta+it||\zeta(k\beta-k+ikt)|}dt
    \ll_{m} Q^{\beta}\log T\int_0^T\frac{|\zeta(\beta-1+it)|}{|\beta+it|}dt\ll_{m}Q^{\beta}T^{\frac{3}{2}-\beta}(\log T)^2.
\end{align*}
Case-II: We next consider the case for non-principal character $\chi$ (mod $m$). We continue with the contour defined above and use the bounds for Dirichlet $L$-function provided in (see \cite{MR551704}). Therefore
\begin{align*}
    I_1, I_3\ll_{m}& \int_{\beta}^{\alpha}\frac{Q^{\sigma}|L(\sigma-1+iT,\chi)|}{|\sigma+iT||L(k\sigma-k+ikT,\chi^k)|}d\sigma
    %\ll_{m,k} \log T\int_{\beta}^{\alpha}\frac{Q^{\sigma}|L(\sigma-1+iT,\chi)|}{|\sigma+iT|}d\sigma\\
    \ll_{m}\log T\left((\log T)^3\int_{\beta}^{\frac{3}{2}}\frac{Q^{\sigma}T^{\frac{127-73\sigma}{108}}}{T}d\sigma \right. \\ &+ \left. (\log T)^3\int_{\frac{3}{2}}^2\frac{Q^{\sigma}T^{\frac{35(2-\sigma)}{108}}}{T}d\sigma+\int_{2}^{\alpha}\frac{Q^{\sigma}\log T}{T}d\sigma \right)
    \ll_{m}\frac{Q^{\alpha}(\log T)^{2}}{T\log Q},
    \end{align*}
and using the bound $|L(k\sigma-k+ikt,\chi^k)|\gg_m 1/\log T$ (see \cite{Vaughan}) and \cite[Proposition 2.2]{Bittu}, we obtain
\begin{align*}
    I_2&\ll_{m} \int_{-T}^{T}\frac{|Q^{\beta+it}||L(\beta-1+it,\chi)|}{|\sigma+it||L(k\beta-k+ikt,\chi^k)|}dt
    \ll_{m} Q^{\beta}\log T\int_0^T\frac{|L(\beta-1+it,\chi)|}{|\beta-1+it|}dt
    \ll_{m} Q^{\beta}T^{\frac{3}{2}-\beta}(\log T)^2.
\end{align*}
We choose optimally
\[T=Q^{\frac{2(k-1)}{3k-2}}\exp{\left(c\frac{(\log Q)^{\frac{3}{5}}}{(\log\log Q)^{\frac{1}{5}}} \right)}, \] 
By collecting all the above estimates, we obtain the required result.
\end{proof}

%The following corollary is an immediate outcome of Proposition \ref{prop1} on taking $m=1$.
%\begin{cor}\label{N(Q,k)}
%If $\mathscr{F}_{Q,k}$ is as in \eqref{f_k} then, we have
   %\[\mathcal{N}(Q,k)=\#\mathscr{F}_{Q,k}=\frac{Q^2}{2\zeta(k)}\prod_{p}\left(1-\frac{p^{k-1}-1}{p(p^k-1)}\right)+\BigOk{Q^{\frac{2(2k-1)}{3k-2}}\exp{\left(-c^{\prime}\left(\frac{\log Q}{\log\log Q}\right)^{1/3} \right)}}.\] 
%\end{cor}
\subsection{Averages of weighted M\"{o}bius function}
\begin{prop}\label{mu}
  Let $b\in\mathbb{Z}$ and $d,l,m$ be positive integers. If $\xi_{d,k}(n)=\mu_k(nd)^2$ then for $x\geq 1$, we have
    \[\sum_{\substack{n\leq x\\(n,\ell)=1\\ n\equiv b\pmod{m}\\ }}\mu(n)\xi_{d,k}(n)\ll_{m} \left\{\begin{array}{cc}
   x\exp{\left(-c\frac{(\log x)^{3/5}}{(\log\log x)^{1/5}} \right)}\prod_{p|d}\left(\frac{\sqrt{p}}{\sqrt{p}-1}\right)\prod_{p|\ell}\left(1-\frac{1}{\sqrt{p}} \right)^{-1},  & \mbox{unconditionally} ,\\
  {x^{\frac{1}{2}+\epsilon}}\prod_{p|d}\left(1+\frac{1}{\sqrt{p}-1}\right)\prod_{p|\ell}\left(1-\frac{1}{\sqrt{p}} \right)^{-1},   & \mbox{on the GRH}.
\end{array}\right. \]
\end{prop}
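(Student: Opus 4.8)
The plan is to first collapse the $k$-freeness weight $\xi_{d,k}(n)=\mu_k(nd)^2$ into a single coprimality condition, and then to estimate the resulting twisted M\"obius sum by the same contour method used for Proposition~\ref{prop1}. First I would note that $\mu(n)\neq 0$ forces $n$ square-free, and that if $d$ is \emph{not} $k$-free then $\mu_k(nd)^2=0$ for every $n$, so the whole sum vanishes and the claim is trivial; hence I may assume $d$ is $k$-free. For square-free $n$ and $k$-free $d$ one has $v_p(nd)=v_p(n)+v_p(d)$, so $nd$ fails to be $k$-free exactly when some prime $p\mid n$ has $v_p(d)=k-1$. Writing $r_d=\prod_{p^{k-1}\,\|\,d}p$ for the product of primes exactly dividing $d$ to the $(k-1)$-th power, this says $\mu(n)\,\xi_{d,k}(n)=\mu(n)\,[(n,r_d)=1]$ for every $n$. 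Consequently the sum reduces to a pure M\"obius sum with one coprimality restriction,
\[\sum_{\substack{n\leq x\\(n,\ell)=1\\ n\equiv b\pmod m}}\mu(n)\xi_{d,k}(n)=\sum_{\substack{n\leq x\\(n,\ell r_d)=1\\ n\equiv b\pmod m}}\mu(n).\]

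Next I would detect the progression using the orthogonality relation \eqref{k67}, reducing matters to bounding, for each $\chi\pmod m$, the character sum $\sum_{n\leq x,\,(n,\ell r_d)=1}\mu(n)\chi(n)$, whose Dirichlet series factors as
\[\sum_{\substack{n\geq 1\\(n,\ell r_d)=1}}\frac{\mu(n)\chi(n)}{n^{s}}=\frac{1}{L(s,\chi)}\prod_{p\mid \ell r_d}\Bigl(1-\chi(p)p^{-s}\Bigr)^{-1}.\]
The finite Euler product is holomorphic and zero-free for $\Re(s)>0$, so the only analytic constraint comes from $1/L(s,\chi)$, which is regular in the Vinogradov--Korobov zero-free region (unconditionally) and throughout $\Re(s)>\tfrac12$ under GRH, contributing \emph{no} pole and hence no main term. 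I would then apply Perron's formula and shift the contour to the left exactly as in Proposition~\ref{prop1}: to the boundary $\beta=1-c(\log T)^{-2/3}(\log\log T)^{-1/3}$ using the zero-free region of \cite{MR4732955} in the unconditional case, and to $\Re(s)=\tfrac12+\epsilon$ under GRH, estimating the horizontal and vertical integrals with the standard bounds for $1/L(s,\chi)$. Choosing $T$ optimally yields the saving $\exp\bigl(-c(\log x)^{3/5}(\log\log x)^{-1/5}\bigr)$ unconditionally and the bound $x^{1/2+\epsilon}$ under GRH.

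The crucial point is that all dependence on the modulus $\ell r_d$ must be absorbed into the explicit Euler product uniformly along the contour. Since the entire contour lies in $\Re(s)=\sigma\geq\tfrac12$, and $|1-\chi(p)p^{-s}|\geq 1-p^{-\sigma}$ with $(1-p^{-\sigma})^{-1}$ decreasing in $\sigma$, one has
\[\left|\prod_{p\mid \ell r_d}\bigl(1-\chi(p)p^{-s}\bigr)^{-1}\right|\leq\prod_{p\mid \ell r_d}\bigl(1-p^{-1/2}\bigr)^{-1}\]
uniformly on the contour. Pulling this factor out, and using $r_d\mid\operatorname{rad}(d)$ together with $(1-p^{-1/2})^{-1}=\sqrt p/(\sqrt p-1)=1+(\sqrt p-1)^{-1}$, gives
\[\prod_{p\mid \ell r_d}\bigl(1-p^{-1/2}\bigr)^{-1}\leq\prod_{p\mid d}\frac{\sqrt p}{\sqrt p-1}\;\prod_{p\mid\ell}\Bigl(1-\tfrac{1}{\sqrt p}\Bigr)^{-1},\]
which is precisely the arithmetic factor in both cases of the stated bound; summing $\phi(m)^{-1}\sum_{\chi}\bar\chi(b)$ then costs only a constant depending on $m$.

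I expect the main obstacle to be exactly this uniformity: guaranteeing that the implied constant depends on $m$ alone and not on $\ell$ or $d$, so that the $\ell r_d$-dependence is carried entirely by the explicit Euler product while the contour length and the bounds on $1/L(s,\chi)$ are controlled purely in terms of $m$ and $x$. A secondary technical point is that dominating the finite Euler product by its value at $\sigma=\tfrac12$ must not destroy the Vinogradov--Korobov saving; this is harmless because the shifted contour never crosses $\Re(s)=\tfrac12$, so the product is uniformly bounded by its value there regardless of how close $\beta$ lies to $1$.
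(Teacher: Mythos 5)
Your proposal is correct and takes essentially the same route as the paper: orthogonality over characters modulo $m$, factoring the Dirichlet series as $1/L(s,\chi)$ times a finite Euler product (the paper's factor $1+\chi(p)(1-\xi_{d,k}(p))(p^s-\chi(p))^{-1}$ over $p\mid d$ is nontrivial exactly when $p\mid r_d$, so it coincides with your product over $p\mid \ell r_d$), followed by Perron's formula, a contour shift to the Vinogradov--Korobov boundary (resp.\ $\Re(s)=\tfrac12+\epsilon$ under GRH), and the same uniform domination of the finite product by its value at $\sigma=\tfrac12$, giving the factors $\prod_{p\mid d}\sqrt{p}/(\sqrt{p}-1)=\prod_{p\mid d}\bigl(1+(\sqrt{p}-1)^{-1}\bigr)$ and $\prod_{p\mid \ell}\bigl(1-p^{-1/2}\bigr)^{-1}$. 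Your preliminary collapse of $\mu(n)\xi_{d,k}(n)$ into the coprimality condition $(n,r_d)=1$ is a clean but inessential repackaging of the paper's multiplicativity observation for $\xi_{d,k}$, and the rest of your argument, including the uniformity discussion, matches the paper's proof.
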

\begin{proof}
It is easy to observe that if $d$ is not $k$-free, then the result follows trivially. Thus, we assume that $d$ is $k$-free. Using \eqref{k67}, we have
\begin{align*}
   \sum_{\substack{n\leq x\\(n,\ell)=1\\ n\equiv b\pmod{m} }}\mu(n)\xi_{d,k}(n)=\frac{1}{\phi(m)}\sum_{\chi}\chi(\bar{b})\sum_{\substack{n\leq x\\(n,\ell)=1 }}\chi(n)\mu(n)\xi_{d,k}(n). 
\end{align*}
 Note that $\xi_{d,k}(n)$ is a multiplicative function of $n$. Let $(n_1,n_2)=1$. If $n_1n_2d$ is $k$-free, then it is easy to observe that $n_1d$ and $n_2d$ are $k$-free. Conversely, suppose that $n_1d$ and $n_2d$ are $k$-free. We need to show that $n_1n_2d$ is also $k$-free. Suppose, for contradiction, that $n_1n_2d$ is not $k$-free; that is, there exists a prime $p$ such that $p^k|n_1n_2d$. Since $\gcd(n_1,n_2)=1$, it follows that either $p^k|n_1d$ or $p^k|n_2d$, which is a contradiction. This proves that $\xi_{d,k}(n)$ is a multiplicative function of $n$. The Dirichlet series of $\chi(n)\mu(n)\xi_{d,k}(n)$ is given by
    \begin{align*}
        F(s)&=\sum_{\substack{n=1\\ (n,\ell)=1}}^{\infty}\frac{\chi(n)\mu(n)\xi_{d,k}(n)}{n^s}=\prod_{\substack{p\\(p,\ell)=1}}\left(1-\frac{\chi(p)\xi_{d,k}(p)}{p^s} \right)\\
        &=\frac{1}{L(s,\chi)}\prod_{p}\left(1+\frac{\chi(p)(1-\xi_{d,k}(p))}{p^s}\left(1-\frac{\chi(p)}{p^s}\right)^{-1}\right)\prod_{p|\ell}\left(1-\frac{\chi(p)\xi_{d,k}(p)}{p^s} \right)^{-1}\\
        &=\frac{1}{L(s,\chi)}\prod_{p|d}\left(1+\frac{\chi(p)(1-\xi_{d,k}(p))}{p^s}\left(1-\frac{\chi(p)}{p^s}\right)^{-1}\right)\prod_{p|\ell}\left(1-\frac{\chi(p)\xi_{d,k}(p)}{p^s} \right)^{-1}.
    \end{align*}
  In the last step, we used the fact that $\xi_{d,k}(p)=1$ if $(p,d)=1$.
%Using the Vinogradov-Korobov zero-free region for $L(s,\chi)$ modulo $m$,
The Dirichlet series $F(s)$ is absolutely convergent for $\Re(s)\geq\beta$, where $\beta=1-c/(\log T)^{2/3}(\log\log T)^{1/3}$.
    %\[\beta=1-\frac{c}{(\log T)^{\frac{2}{3}}(\log\log T)^{\frac{1}{3}}}. \]
    Employing Perron's formula (\cite{Tenenbaum}, Theorem 2, p. 132) for the Dirichlet series $F(s)$ with $\alpha=1+\frac{1}{\log x}$, we have
    \[\sum_{\substack{n\leq x\\ (n,\ell)=1}}\chi(n)\mu(n)\xi_{d,k}(n)=\frac{1}{2\pi i}\int_{\alpha-iT}^{\alpha+iT}F(s)\frac{x^s}{s}ds+\BigO{R(T)}, \]
    where
    \[R(T)\ll\frac{x^\alpha}{T}\sum_{n=1}^\infty\frac{1}{n^{\alpha}|\log x/n|}\ll\frac{x\log x}{T}. \numberthis\label{k9}\]
    In here, we bound the error term $R(T)$ as in Davenport (see \cite{Davenport}, p. 106-107). We next move the path of integration into a rectangular contour with line segments $[\alpha-iT, \alpha+iT],\ [\alpha+iT, \beta+iT],\ [\beta+iT, \beta-iT]$, and $[\beta-iT, \alpha-iT]$. For $\beta\leq\sigma\leq\alpha$, we have
    \begin{align*}
      \left|\prod_{p|\ell}\left(1-\frac{\chi(p)\xi_{d,k}(p)}{p^s} \right)^{-1} \right| %\leq\prod_{p|\ell}\left(1-\frac{\xi_{d,k}(p)}{p^{\sigma}} \right)^{-1}
      \leq\prod_{p|\ell}\left(1-\frac{1}{\sqrt{p}} \right)^{-1}\ \text{and}\ 
      \left|\prod_{p|d}\left(1+\frac{\chi(p)(1-\xi_{d,k}(p))}{p^s-\chi(p)}\right) \right|\leq \prod_{p|d}\left(1+\frac{1}{\sqrt{p}-1}\right).
    \end{align*}
    %\[\left|\prod_{p|\ell}\left(1-\frac{\chi(p)\xi_{d,k}(p)}{p^s} \right)^{-1} \right| \leq\prod_{p|\ell}\left(1-\frac{\xi_{d,k}(p)}{p^{\sigma}} \right)^{-1}\leq\prod_{p|\ell}\left(1-\frac{1}{\sqrt{p}} \right)^{-1}, \]
    %and
    %\[\left|\prod_{p|d}\left(1+\frac{\chi(p)(1-\xi_{d,k}(p))}{p^s}\left(1-\frac{\chi(p)}{p^s}\right)^{-1}\right) \right|\leq \prod_{p|d}\left(1+\frac{1}{\sqrt{p}-1}\right). \]
    By Cauchy's theorem, we have
    \[\frac{1}{2\pi i}\int_{\alpha-iT}^{\alpha+iT}F(s)\frac{x^s}{s}ds=\frac{1}{2\pi i}\left(\int_{\alpha-iT}^{\beta-iT}+\int_{\beta-iT}^{\beta+iT}+\int_{\beta+iT}^{\alpha+iT} \right)F(s)\frac{x^s}{s}ds:=I_1+I_2+I_3. \]
    We first estimate the integrals $I_1$ and $I_3$:
    \begin{align*}
    I_1, I_3
    %&\ll\prod_{p|d}\left(1+\frac{1}{\sqrt{p}-1}\right)\prod_{p|\ell}\left(1-\frac{1}{\sqrt{p}} \right)^{-1}\int_{\beta}^{\alpha}\frac{x^{\sigma}}{|\sigma+iT||L(\sigma+iT,\chi)|}d\sigma\\
    %&\ll_{k,m}\prod_{p|d}\left(1+\frac{1}{\sqrt{p}-1}\right)\prod_{p|\ell}\left(1-\frac{1}{\sqrt{p}} \right)^{-1}\frac{\log T}{T}\int_{\beta}^{\alpha}x^{\sigma}d\sigma
    &\ll_{m} \frac{x\log T}{T\log x}\prod_{p|\ell}\left(1-\frac{1}{\sqrt{p}} \right)^{-1}\prod_{p|d}\left(1+\frac{1}{\sqrt{p}-1}\right). 
    \end{align*}
    The integral $I_2$ is estimated as
    \begin{align*}
     I_2 %&\ll\prod_{p|d}\left(1+\frac{1}{\sqrt{p}-1}\right)\prod_{p|\ell}\left(1-\frac{1}{\sqrt{p}} \right)^{-1}x^{\beta}\int_{0}^T\frac{1}{|\beta+it||L(\beta+it,\chi)|}dt
     &\ll_{m} x^{\beta}(\log T)^2\prod_{p|d}\left(1+\frac{1}{\sqrt{p}-1}\right)\prod_{p|\ell}\left(1-\frac{1}{\sqrt{p}} \right)^{-1}.    \end{align*}
We collect all the above estimate and take $ T=\exp{\left(\frac{c(\log x)^{\frac{3}{5}}}{(\log\log x)^{\frac{1}{5}}}\right)}$. This completes the proof unconditionally. Assuming GRH, the Dirichlet series $F(s)$ is absolutely convergent for $\Re(s)>1/2$. By using Perron's formula with $\alpha=1+\frac{1}{\log x}$ and $\beta=\frac{1}{2}+\epsilon$, and proceeding in a similar manner as in the unconditional case, we obtain the proof under GRH. This completes the proof of Proposition \ref{mu}.
%\[\sum_{\substack{n\leq x\\ (n,q)=1}}\mu(n)\ll x\exp{\left(-c\frac{(\log x)^{2/3}}{(\log\log x)^{1/3}} \right)}\prod_{p|q}\left(1-\frac{1}{p^{\frac{1}{2}}} \right)^{-1}. \]
    \end{proof}

\begin{prop}\label{Prop1}
 Let $b\in\mathbb{Z}$, and let $d,l,m$ be positive integers. Suppose $d$ is $k$-free and $\xi_{d,k}(n)=\mu_k(nd)^2$. For $x\geq 2$, we have
   \begin{align*}
   \sum_{\substack{n\leq x\\(n,\ell)=1\\ n\equiv b\pmod{m}}}\frac{\xi_{d,k}(n)}{n}=&\mathcal{M}_{m,d,l}(x) +\BigOkmld{x^{\frac{-2(k-1)}{3k-2}}\exp{\left(-c\frac{(\log x)^{3/5}}{(\log\log x)^{1/5}} \right)}},
   \end{align*}
   where $c$ is some positive constant and
   \begin{align*}
      \mathcal{M}_{m,d,l}(x)=& \left(\log x -k\frac{L^{\prime}(k,\chi_0)}{L(k,\chi_0)}+\sum_{\substack{p|d\\(p,m)=1}}\left(\frac{-k\log p}{p^k-1}+\frac{\log p}{p-1}-\log p\sum_{\substack{j=1}}^{k-1}\frac{j\xi_{d,k}(p^j)}{p^j}  \left(1+\sum_{j=1}^{k-1}\frac{\xi_{d,k}(p^j)}{p^j} \right)^{-1} \right) \right. \\ &+ \left.  \log p\sum_{\substack{p|\ell\\(p,m)=1}}\sum_{j=1}^{k-1}\frac{j\xi_{d,k}(p^j)}{p^j}\left(1+\sum_{j=1}^{k-1}\frac{\xi_{d,k}(p^j)}{p^j} \right)^{-1}+\gamma+\sum_{p|m}\frac{\log p}{p-1}   \right)\frac{1}{L(k,\chi_0)}\prod_{p|m}\left(1-\frac{1}{p}\right)\\&\times\prod_{\substack{p|d\\(p,m)=1}}\left(1-\frac{1}{p^{k}} \right)^{-1}\left(1-\frac{1}{p} \right)\prod_{\substack{p|d\\(p,m)=1}}\left(1+\sum_{j=1}^{k-1}\frac{\xi_{d,k}(p^j)}{p^j} \right)\prod_{\substack{p|\ell\\(p,m)=1}}\left(1+\sum_{j=1}^{k-1}\frac{\xi_{d,k}(p^j)}{p^j} \right)^{-1}.
   \end{align*}
\end{prop}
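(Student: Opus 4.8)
The plan is to mirror the contour-integration argument used for Proposition~\ref{mu}, but now extracting the complete main term rather than merely an upper bound. First I would apply the orthogonality relation \eqref{k67} to write
\[
\sum_{\substack{n\le x,\ (n,\ell)=1\\ n\equiv b\,(m)}}\frac{\xi_{d,k}(n)}{n}=\frac{1}{\phi(m)}\sum_{\chi\pmod m}\bar\chi(b)\sum_{\substack{n\le x\\(n,\ell)=1}}\frac{\chi(n)\xi_{d,k}(n)}{n}.
\]
Because $\xi_{d,k}$ is multiplicative with $\xi_{d,k}(p^j)=1$ whenever $(p,d)=1$ and $j\le k-1$, the local factor at a prime with $(p,d\ell)=1$ equals $\frac{1-\chi^k(p)p^{-ks}}{1-\chi(p)p^{-s}}$, so the associated Dirichlet series factors as
\[
F_\chi(s):=\sum_{(n,\ell)=1}\frac{\chi(n)\xi_{d,k}(n)}{n^s}=\frac{L(s,\chi)}{L(ks,\chi^k)}\,E_\chi(s),
\]
where $E_\chi(s)$ is a finite Euler product over the primes dividing $d\ell$. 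The key structural point is that $1/L(ks,\chi^k)$ and $E_\chi$ are holomorphic and (uniformly in $d,\ell,m$) harmless in a neighbourhood of $\Re(s)=1$, since there $\Re(ks)\approx k\ge 2$ lies well inside the region of absolute convergence.

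I would then feed each character sum into Perron's formula, integrating $F_\chi(s)\,x^{s-1}/(s-1)$ along $\Re(s)=\alpha=1+1/\log x$ and moving the line to the left exactly as in the proofs of Propositions~\ref{prop1} and~\ref{mu}. The main term arises from the principal character: $L(s,\chi_0)=\zeta(s)\prod_{p\mid m}(1-p^{-s})$ has a pole at $s=1$, so the integrand has a \emph{double} pole there and the residue has the form $c_{-1}\log x+c_0$. The leading coefficient $c_{-1}=\operatorname{Res}_{s=1}F_{\chi_0}(s)$ supplies the $\log x$ term together with the product factors displayed in $\mathcal{M}_{m,d,l}(x)$, while $c_0$ is assembled from the second Laurent/Taylor coefficients of the three factors: the expansion of $\zeta(s)\prod_{p\mid m}(1-p^{-s})$ at $s=1$ yields $\gamma$ and $\sum_{p\mid m}\frac{\log p}{p-1}$; the logarithmic derivative of $1/L(ks,\chi_0)$ gives $-k\,L'(k,\chi_0)/L(k,\chi_0)$; and the expansion of $E_{\chi_0}(s)$ produces the sums over $p\mid d$ and $p\mid\ell$. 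Collecting these reproduces $\mathcal{M}_{m,d,l}(x)$ exactly.

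For the error I would bound the horizontal and vertical integrals on the shifted contour using the Vinogradov--Korobov zero-free region for $L(ks,\chi^k)$ together with the standard convexity bounds for the $L$-factors, and combine these with the truncation error from Perron's formula; optimizing the height $T$ reproduces exactly the relative error of Proposition~\ref{prop1}, which is the claimed bound.

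The step I expect to be the main obstacle is the exact evaluation of the order-two residue at $s=1$: one must compute the first two coefficients in the Laurent/Taylor expansions of $\zeta(s)\prod_{p\mid m}(1-p^{-s})$, $1/L(ks,\chi_0)$ and $E_{\chi_0}(s)$, multiply them together, and verify that the product matches every explicit term of $\mathcal{M}_{m,d,l}(x)$ while keeping all dependence on $d,\ell,m$ uniform. A secondary delicate point is the non-principal characters: since $L(s,\chi)$ is entire there, the integrand has only the simple pole of Perron's kernel at $s=1$, whose residue $F_\chi(1)$ is $\BigO{1}$ rather than $o(1)$; one has to check that these bounded contributions do not disturb the stated main term (in the downstream correlation applications they cancel, as noted in the remark following Theorem~\ref{v correlation}).
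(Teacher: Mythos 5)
Your plan is, in all essentials, the paper's own proof: orthogonality \eqref{k67}; the factorization $F_\chi(s)=\frac{L(s,\chi)}{L(ks,\chi^k)}E_\chi(s)$ with $E_\chi$ a finite Euler product over $p\mid d\ell$ (the paper writes the same series shifted by one, $\sum_n \xi_{d,k}(n)\chi(n)n^{-s-1}$, so its double pole sits at $s=0$ rather than $s=1$); Perron's formula; a contour shift into the Vinogradov--Korobov region dictated by the zeros of $L(ks,\chi^k)$; the order-two residue at the principal character assembling $\mathcal{M}_{m,d,l}(x)$ exactly as you describe ($\gamma$ and $\sum_{p\mid m}\frac{\log p}{p-1}$ from $\zeta(s)\prod_{p\mid m}(1-p^{-s})$, the term $-kL^{\prime}(k,\chi_0)/L(k,\chi_0)$ from the denominator, the $p\mid d$ and $p\mid\ell$ sums from $E_{\chi_0}$); and the choice $T=x^{2(k-1)/(3k-2)}\exp\left(c(\log x)^{3/5}(\log\log x)^{-1/5}\right)$, which, as you note, reproduces the relative error of Proposition~\ref{prop1}.

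The one substantive divergence is your flagged ``secondary delicate point,'' and there your instinct is sharper than your proposed resolution. For $\chi\ne\chi_0$ the shifted rectangle still encloses the simple pole of the Perron kernel (at $s=1$ in your normalization, $s=0$ in the paper's), contributing the residue $F_\chi(1)=\frac{L(1,\chi)}{L(k,\chi^k)}E_\chi(1)$; since $L(1,\chi)\ne 0$, the resulting constant $\frac{1}{\phi(m)}\sum_{\chi\ne\chi_0}\bar\chi(b)F_\chi(1)$ is in general nonvanishing and depends on $b$, it does not appear in $\mathcal{M}_{m,d,l}(x)$, and it is certainly not $\BigOkmld{x^{\frac{-2(k-1)}{3k-2}}\exp\left(-c\frac{(\log x)^{3/5}}{(\log\log x)^{1/5}}\right)}$. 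The cancellation you invoke via the remark after Theorem~\ref{v correlation} is a different mechanism: there the non-principal contributions are oscillating character sums over long ranges, handled by Proposition~\ref{prop15}, whereas here one faces fixed residues that no such averaging removes. To be fair, this defect is inherited rather than introduced: the paper's own Case-II for $\chi\ne\chi_0$ bounds only $I_1,I_2,I_3$ and silently drops this residue, so your write-up is, if anything, more honest than the source. Repairing the statement requires either adding the non-principal constant to the main term or weakening the error to $\BigO{1}$; the weaker form still suffices for the proposition's only use, in the hyperbola-method step of Theorem~\ref{beta2}.
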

\begin{proof}
We take into account \eqref{k67} to obtain
\begin{align*}
   \sum_{\substack{n\leq x\\(n,\ell)=1\\ n\equiv b\pmod{m} }}\frac{\xi_{d,k}(n)}{n}=\frac{1}{\phi(m)}\sum_{\chi}\chi(\bar{b})\sum_{\substack{n\leq x\\(n,\ell)=1 }}\frac{\chi(n)\xi_{d,k}(n)}{n}. 
\end{align*}
   The Dirichlet series of $\frac{\xi_{d,k}(n)}{n}$ is as follows:
   \begin{align*}
       F(s)=&\sum_{n=1}^{\infty}\frac{\xi_{d,k}(n)\chi(n)}{n^{s+1}}=\frac{L(s+1,\chi)}{L(ks+k,\chi^k)}\prod_{p|d}\left(1-\frac{\chi(p)}{p^{s+1}} \right)\left(1-\frac{\chi(p^k)}{p^{k(s+1)}}\right)^{-1}\\&\times\prod_{p|d}\left(1+\sum_{j=1}^{k-1}\frac{\xi_{d,k}(p^{j})\chi(p^j)}{p^{j(s+1)}} \right)\prod_{p|\ell}\left(1+\sum_{j=1}^{k-1}\frac{\xi_{d,k}(p^j)\chi(p^j)}{p^{j(s+1)}} \right)^{-1}.
   \end{align*}
Note that $F(s)$ is absolutely convergent for $\Re(s)>0$ and it can be analytically continued to the half-plane $\Re(s)=\beta>-1+\frac{1}{k}-\frac{c}{(\log T)^{2/3}(\log\log T)^{1/3}}$ except for a pole at $s=0$ when $\chi=\chi_0$. For some fixed $\alpha=1/\log x$ and the Dirichlet series $F(s)$, we apply Perron's formula (\cite{Tenenbaum}, Theorem 2, p. 132)
\[\sum_{\substack{n\leq x\\(n,\ell)=1}}\frac{\xi_{d,k}(n)\chi(n)}{n}=\frac{1}{2\pi i}\int_{\alpha-iT}^{\alpha+iT}F(s)\frac{x^s}{s}ds+\BigO{R(T)},\numberthis\label{k21} \]
    where
    \[R(T)\ll\frac{x^\alpha}{T}\sum_{n=1}^\infty\frac{1}{n^{\alpha+1}|\log x/n|}\ll\frac{\log x}{T}. \]
To estimate the integral on the right hand side of \eqref{k21}, we shift the line of integral into a rectangular contour with vertices $\alpha\pm iT$ and $\beta\pm iT$. We first consider the principal character. In this case, the integrand in \eqref{k21} has a pole of order $2$ at $s=0$. Denote
\begin{align*}
    Z(s)=&\frac{x^sL(s+1,\chi)}{sL(ks+k,\chi^k)}\prod_{p|d}\left(1-\frac{\chi(p)}{p^{s+1}} \right)\left(1-\frac{\chi(p^k)}{p^{k(s+1)}}\right)^{-1}\left(1+\sum_{j=1}^{k-1}\frac{\xi_{d,k}(p^j)\chi(p^j)}{p^{j(s+1)}} \right)\\&\times\prod_{p|\ell}\left(1+\sum_{j=1}^{k-1}\frac{\xi_{d,k}(p^j)\chi(p^j)}{p^{j(s+1)}} \right)^{-1} .
\end{align*}
 By Cauchy's residue theorem, we have
\[\frac{1}{2\pi i}\int_{\alpha-iT}^{\alpha+iT}F(s)\frac{x^s}{s}ds=\text{Res}_{s=0}Z(s)+\sum_{i=1}^{3}I_i, \]
where $I_1$ and $I_3$ are integrals along horizontal segments $[\alpha-iT,\beta-iT]$ and $[\beta+iT,\alpha+iT]$, respectively and $I_2$ is the integral along vertical segment $[\beta-iT,\beta+iT]$. The first term in the above identity is the residue of the second order pole of $Z(s)$ at $s=0$, and is given by $\mathcal{M}_{m,d,l}(x)$.  
We use the standard bounds for the Riemann zeta function $\zeta(s)$ \cite[page 47]{MR882550}, modulo multiplication by constants depending on $d$ and $\ell$
\begin{align*}
    I_1, I_3 
    %&\ll_{k,\ell} \frac{\log T}{T}\prod_{p|d}\left(1-\frac{1}{\sqrt{p}-1}\right)\left(\int_{\beta}^0x^{\sigma}|\zeta(\sigma+iT)|d\sigma+\int_{0}^{\alpha}x^{\sigma}|\zeta(\sigma+iT)|d\sigma \right)\\
    &\ll_{m,d,\ell} \frac{\log T}{T}\left(\int_{\beta}^0x^{\sigma}T^{\frac{-\sigma}{2}}d\sigma+\int_{0}^{\alpha}x^{\sigma}\log T d\sigma \right)
    \ll_{m,d,\ell} \frac{(\log T)^2}{T\log x}.
\end{align*}
We next estimate the integral $I_2$ using \cite[Proposition 2.1]{Bittu} 
\begin{align*}
    I_2 \ll_{m,d,\ell} x^{\beta}\log T \int_{0}^{T}\frac{|\zeta(\beta+1+it)|}{|\beta+it|}dt
    \ll_{m,d,\ell} x^{\beta}T^{\frac{-1}{2}-\beta}(\log T)^2.
\end{align*}
We next consider the case for the non-principal character $\chi\ne\chi_0$. We continue with the contour defined above. Using the bounds for the Dirichlet $L$-function modulo $m$ and \cite[Proposition 2.2]{Bittu}, we obtain
\[I_1, I_3\ll_{m,d,\ell} \frac{(\log T)^2}{T\log x}\ \text{and}\ I_2\ll_{m,d,\ell} x^{\beta}T^{\frac{-1}{2}-\beta}(\log T)^2. \]
We collect all the above estimate and take optimally $T=x^{\frac{2(k-1)}{3k-2}}\exp\left(c\frac{(\log x)^{3/5}}{(\log\log x)^{1/5}} \right)$. This completes the proof of Proposition \ref{Prop1}.
\end{proof}

%We next prove a similar result to Proposition \ref{Prop}.
\begin{prop}\label{mu product}
 For $x\geq 1$, we have
    \[\sum_{n\leq x}\mu_k(n)^2\prod_{p|n}\left(1-\frac{1}{\sqrt{p}} \right)^{-1}=\frac{x}{\zeta(k)}\prod_{p}\left(1+\frac{p^{k-1}-1}{(\sqrt{p}-1)(p^k-1)} \right)+\BigO{x^{\frac{k}{3k-2}}\exp{\left(-c\frac{(\log x)^{3/5}}{(\log\log x)^{1/5}} \right)}}. \]
\end{prop}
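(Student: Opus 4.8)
The plan is to evaluate the sum by Mellin inversion, exactly along the lines of Propositions~\ref{prop1} and~\ref{Prop1}. Set $h(n)=\mu_k(n)^2\prod_{p\mid n}\left(1-p^{-1/2}\right)^{-1}$ and observe that $h$ is multiplicative, with $h(p^j)=(1-p^{-1/2})^{-1}$ for $1\le j\le k-1$ and $h(p^j)=0$ for $j\ge k$. Hence its Dirichlet series factors as an Euler product,
\[
F(s)=\sum_{n=1}^{\infty}\frac{h(n)}{n^s}=\prod_p\left(1+\frac{1}{1-p^{-1/2}}\sum_{j=1}^{k-1}p^{-js}\right),
\]
absolutely convergent for $\Re(s)>1$. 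The first step is to strip off the principal singularity at $s=1$: pulling out $\zeta(s)=\prod_p(1-p^{-s})^{-1}$ and telescoping $\sum_{j=0}^{k-1}p^{-js}(1-p^{-s})=1-p^{-ks}$, I would write $F(s)=\zeta(s)\,G(s)$, where $G$ has local factor $1+(a_p-1)p^{-s}-a_p p^{-ks}$ with $a_p=(1-p^{-1/2})^{-1}$ and $a_p-1=p^{-1/2}/(1-p^{-1/2})$. A direct computation collecting these local factors shows that $G(1)=\zeta(k)^{-1}\prod_p\bigl(1+\tfrac{p^{k-1}-1}{(\sqrt p-1)(p^k-1)}\bigr)$, which is precisely the constant in the asserted main term; thus the residue of $F(s)x^s/s$ at the simple pole $s=1$ equals $\frac{x}{\zeta(k)}\prod_p(\dots)$.

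Next I would apply Perron's formula with $\alpha=1+1/\log x$ and truncation height $T$, bounding the truncation error by $\ll x(\log x)/T$ exactly as in the proof of Proposition~\ref{mu}. Shifting the line of integration to a contour with vertices $\alpha\pm iT$ and $\beta\pm iT$, where $\beta=\tfrac1k-c/(\log T)^{2/3}(\log\log T)^{1/3}$ lies inside the Vinogradov--Korobov zero-free region of $\zeta(ks)$, one collects the residue at $s=1$ together with the horizontal and vertical integrals $I_1,I_2,I_3$. These are estimated by the same ingredients used throughout Section~2: the standard bounds for $\zeta$ from \cite{MR882550}, the lower bound $|\zeta(ks)|\gg 1/\log T$ on the contour, and the mean-value estimate for $\zeta(s)/s$ of \cite{Bittu}. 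Choosing $T=x^{2(k-1)/(3k-2)}\exp\!\big(c(\log x)^{3/5}(\log\log x)^{-1/5}\big)$ to balance the two contributions then produces the stated error $\BigO{x^{k/(3k-2)}\exp\!\big(-c(\log x)^{3/5}(\log\log x)^{-1/5}\big)}$.

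The genuinely new difficulty, absent from the unweighted estimate of $\sum_{n\le x}\mu_k(n)^2$, is that the weight $\prod_{p\mid n}(1-p^{-1/2})^{-1}$ feeds a term of size $p^{-s-1/2}$ into each local factor of $G$, so that $G(s)$ behaves like $\zeta(s+1/2)$ and threatens a pole at $s=\tfrac12$. Since for $k\ge 3$ the contour is pushed to $\Re(s)=\beta<\tfrac12$, the main obstacle is to rule out a spurious secondary main term of order $x^{1/2}$. The clean way to control this is to refine the factorization to $F(s)=\dfrac{\zeta(s)\,\zeta(s+1/2)}{\zeta(ks)}\,V(s)$, with $V$ holomorphic and of polynomial growth well to the left of $\Re(s)=\tfrac12$; in the case $k=2$ the pole of $\zeta(s+1/2)$ at $s=\tfrac12$ is exactly cancelled by the zero of $1/\zeta(ks)=1/\zeta(2s)$ at that point, so no extra term survives and the argument closes precisely as stated. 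Establishing this cancellation, and verifying the holomorphy and growth of $V$ on the shifted contour, is the technical heart of the proof; once it is secured, the residue calculus and the integral bounds go through routinely.
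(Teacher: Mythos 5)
Your skeleton (multiplicativity of the weight, the Euler product, factoring out $\zeta(s)$, Perron with $\alpha=1+1/\log x$, the contour shift into the Vinogradov--Korobov region of $\zeta(ks)$, and the balancing choice of $T$) is exactly the route the paper intends --- its own proof is the one-line ``similar to Proposition~\ref{mu}'' --- and your identification of the main-term constant as the residue at $s=1$ is correct: at $s=1$ the corrected local factor is $\left(1-p^{-k}\right)^{-1}\left(1-p^{-k}+\frac{p^{-1}-p^{-k}}{\sqrt{p}-1}\right)=1+\frac{p^{k-1}-1}{(\sqrt{p}-1)(p^k-1)}$, matching the statement. You also correctly isolate the real crux, which the paper passes over in silence: since $a_p-1=(\sqrt{p}-1)^{-1}\sim p^{-1/2}$, the quotient $F(s)/\zeta(s)$ behaves like $\zeta(s+\tfrac12)$, so a singularity at $s=\tfrac12$ sits to the \emph{right} of the shifted line $\Re(s)\approx 1/k$ as soon as $k\ge 3$.

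However, the cancellation you propose to ``establish'' at $s=\tfrac12$ does not exist for $k\ge 3$, so your proof cannot be closed as written --- and in fact the stated error term itself fails there. Writing $F(s)=\zeta(s)\,\zeta(s+\tfrac12)\,\zeta(ks)^{-1}V(s)$, the factor $V$ is holomorphic near $s=\tfrac12$ with local factors $1+\operatorname{O}\left(p^{-3/2}\right)$ at that point, each positive, so $V(\tfrac12)>0$. The pole of $\zeta(s+\tfrac12)$ is killed by $1/\zeta(ks)$ only when $k=2$ (the pole of $\zeta(2s)$ at $s=\tfrac12$); for $k\ge 3$ the value $\zeta(k/2)$ is finite and nonzero, so $F(s)x^s/s$ has a genuine simple pole at $s=\tfrac12$ with residue $2\zeta(\tfrac12)V(\tfrac12)\zeta(k/2)^{-1}x^{1/2}\ne 0$. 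Since $k/(3k-2)<\tfrac12$ exactly when $k>2$, this secondary term of exact order $x^{1/2}$ dominates the claimed error $\operatorname{O}\left(x^{k/(3k-2)}\exp\left(-c(\log x)^{3/5}(\log\log x)^{-1/5}\right)\right)$ for every $k\ge 3$. So your argument is complete only for $k=2$, where $k/(3k-2)=\tfrac12$ and the Vinogradov--Korobov region of $\zeta(2s)$ lets the contour pass just left of $\Re(s)=\tfrac12$, recovering precisely the stated bound; for $k\ge 3$ you must either halt the contour at $\Re(s)=\tfrac12$ and settle for an error of order $x^{1/2}$, or record the secondary term $c_kx^{1/2}$ explicitly. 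Neither repair damages the paper: its sole use of this proposition, in the proof of Theorem~\ref{thm1}, needs only the upper bound $\ll x$.
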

\begin{proof}
The proof is similar to Proposition \ref{mu}. %with the Dirichlet series
%\[F(s)=\frac{\zeta(s)}{\zeta(ks)}\prod_{p}\left(1+\frac{p^{(k-1)s}-1}{(\sqrt{p}-1)(p^{ks}-1)} \right). \]
\end{proof}
\subsection{Weighted $k$-free Farey sums} We next expand Farey sums for Farey fractions in $\mathscr{F}_{Q,k}^{(m)}$ using the M\"{o}bius and the $k$-free M\"{o}bius function. This is helpful in the reduction and estimation of the exponential sums for $\mathscr{F}_{Q,k}^{(m)}$ required for the proof of Theorem \ref{thm2}. 
\begin{lem}\label{f(gamma)}
    Assume that $f$ is any complex-valued function defined on the interval $[0,1]$, and let $\gamma_i\in\mathscr{F}_{Q,k}^{(m)}\ \text{for}\ 1\leq i\leq \mathcal{N}(Q,k,m)$. Then, we have
    \[\sum_{j=1}^{\mathcal{N}(Q,k,m)}f(\gamma_j)=\sum_{q\leq Q}M_q\left(\frac{Q}{q}\right)\sum_{a\leq q}f\left(\frac{a}{q}\right), \]
    where \[ M_q(x)=\sum_{\substack{n\leq x\\qn\equiv b\pmod{m}}}\mu(n)\mu_k(qn)^2 .\]
\end{lem}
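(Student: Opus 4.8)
The plan is to start from the right-hand side and collapse it to the left-hand side by a single reindexing followed by M\"obius inversion, so that no estimate is needed at all. First I would insert the definition of $M_q$ and write the right-hand side as
\[\sum_{q\le Q}\ \sum_{\substack{n\le Q/q\\ qn\equiv b\pmod m}}\mu(n)\,\mu_k(qn)^2\sum_{a\le q}f\!\left(\frac{a}{q}\right).\]
The decisive step is the change of variable $N=qn$: the conditions $q\le Q$ and $n\le Q/q$ are together equivalent to $q\mid N$ and $N\le Q$, the congruence $qn\equiv b\pmod m$ becomes $N\equiv b\pmod m$, and the weight $\mu_k(qn)^2=\mu_k(N)^2$ is exactly the indicator that $N$ is $k$-free. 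Grouping terms by $N$ therefore rewrites the right-hand side as
\[\sum_{\substack{N\le Q,\ N\ k\text{-free}\\ N\equiv b\pmod m}}\ \sum_{q\mid N}\mu\!\left(\frac{N}{q}\right)\sum_{a\le q}f\!\left(\frac{a}{q}\right).\]

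Second, I would evaluate the innermost double sum for each fixed $N$. Writing $d=N/q$ and using the substitution $a\mapsto ad$, which matches the integers $a\le N/d$ with the multiples of $d$ in $\{1,\dots,N\}$, this double sum becomes $\sum_{d\mid N}\mu(d)\sum_{\substack{a'\le N\\ d\mid a'}}f(a'/N)$. Interchanging the order of summation and applying the identity $\sum_{d\mid\gcd(a',N)}\mu(d)=\mathbf{1}[\gcd(a',N)=1]$ collapses it to $\sum_{\substack{a'\le N\\ \gcd(a',N)=1}}f(a'/N)$, which is precisely the contribution of the denominator $N$ to $\sum_{\gamma\in\mathscr{F}_{Q,k}^{(m)}}f(\gamma)$. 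Summing over the admissible $N$ then yields the left-hand side and closes the argument.

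Since everything is an exact algebraic identity, there is no analytic obstacle; the only point demanding care is the bookkeeping in the reindexing $N=qn$ --- verifying that the summation ranges and the arithmetic constraints transform as claimed, and that the passage between the unrestricted sum over all $a\le q\le Q$ on the right and the coprimality-restricted sum over $\mathscr{F}_{Q,k}^{(m)}$ on the left neither loses nor double-counts any fraction. As a consistency check I would also compute, for a fixed reduced fraction $a'/q'$ with $q'\le Q$, the total coefficient with which $f(a'/q')$ appears on the right; this coefficient is $\sum_{t:\,q't\le Q}M_{q't}\!\left(Q/(q't)\right)$, and the same type of reindexing together with $\sum_{d\mid s}\mu(d)=\mathbf{1}[s=1]$ reduces it to $\mu_k(q')^2\,\mathbf{1}[q'\equiv b\pmod m]$, which is exactly the weight attached to $a'/q'$ on the left.
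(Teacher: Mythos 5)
Your proposal is correct and is essentially the paper's argument run in reverse: the paper starts from the left-hand side, detects the coprimality condition via $\sum_{d\mid\gcd(a,q)}\mu(d)$ and reindexes $q\mapsto qd$, $a\mapsto ad$ to produce the sums $M_q(Q/q)$, whereas you start from the right-hand side, reindex with $N=qn$, and apply the same M\"obius identity $\sum_{d\mid\gcd(a',N)}\mu(d)=\mathbf{1}[\gcd(a',N)=1]$ to reinstate the coprimality condition. All the bookkeeping (the equivalence of $q\le Q$, $n\le Q/q$ with $q\mid N$, $N\le Q$; the transfer of the congruence and the $k$-free weight; and the closing coefficient check, which correctly collapses to $\mu_k(q')^2\,\mathbf{1}[q'\equiv b\ (\mathrm{mod}\ m)]$ via $\sum_{n\mid s}\mu(n)=\mathbf{1}[s=1]$) is sound, so there is no gap.
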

\begin{proof}
    We write
    %\[\sum_{v=1}^{\mathcal{N}(Q,k)}f(\gamma_v)=\sum_{q\leq Q}\mu_k(q)^2\sum_{\substack{a\leq q\\\gcd(a,q)=1}}f\left(\frac{a}{q}\right). \]
    \begin{align*}
     \sum_{j=1}^{\mathcal{N}(Q,k,m)}f(\gamma_j)&=\sum_{\substack{q\leq Q\\q\equiv b\pmod{m}}}\mu_k(q)^2\sum_{\substack{a\leq q\\(a,q)=1}}f\left(\frac{a}{q}\right)\\
     &=\sum_{\substack{q\leq Q\\q\equiv b\pmod{m}}}\mu_k(q)^2\sum_{\substack{a\leq q}}f\left(\frac{a}{q}\right)\sum_{\substack{d|a\\d|q}}\mu(d)
     =\sum_{d\leq Q}\mu(d)\sum_{\substack{q\leq Q\\q\equiv b\pmod{m}\\ d|q}}\mu_k(q)^2\sum_{\substack{\substack{a\leq q\\ d|a}}}f\left(\frac{a}{q}\right)\\
     &=\sum_{d\leq Q}\mu(d)\sum_{\substack{q\leq\frac{Q}{d}\\qd\equiv b\pmod{m}}}\mu_k(qd)^2\sum_{a\leq q}f\left(\frac{a}{q}\right)=\sum_{q\leq Q}M_q\left(\frac{Q}{q}\right)\sum_{a\leq q}f\left(\frac{a}{q}\right).
     %&=\sum_{d\leq Q}\mu(d)\sum_{\substack{q\leq\frac{Q}{d}\\\gcd(q,d)=1}}\mu_k(q)^2\sum_{a\leq q}f\left(\frac{a}{q}\right)+\sum_{d\leq Q}\mu(d)\sum_{\substack{q\leq\frac{Q}{d}\\\gcd(q,d)\ne 1}}\mu_k(qd)^2\sum_{a\leq q}f\left(\frac{a}{q}\right).\numberthis\label{2}
    \end{align*}
    \end{proof}

%\begin{prop}\label{prop6}
   % Let $f(x)=x-\lfloor x\rfloor-\frac{1}{2}$. Then, we have
   % \[\sum_{m=0}^{n-1}f\left(x+\frac{m}{n}\right)=f(nx). \]
%\end{prop}
\begin{lem}
\label{prop7}
    Let $f(x)=x-\lfloor x\rfloor-\frac{1}{2}$ and $M_n$ as in Lemma \ref{f(gamma)}. For any real number $u\in[0,1]$ lying between two successive Farey fractions $\gamma_v$ and $\gamma_{v+1}$ in $\mathscr{F}_{Q,k}^{(m)}$, we have %Assume that $\gamma_i\in\mathscr{F}_{Q,k}^{(m)}\ \text{for}\ 1\leq i\leq \mathcal{N}(Q,k,m)$. 
    
    \[\sum_{j=1}^{\mathcal{N}(Q,k,m)}f(u+\gamma_j)=\mathcal{N}(Q,k,m)u-v-\frac{1}{2}. \]
    %then between $\gamma_v$ and $\gamma_{v+1}$, the value of $G$ is given by the formula $G(u)=\mathcal{N}(Q,k,m)u-v-\frac{1}{2}$.
\end{lem}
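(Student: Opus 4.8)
The plan is to evaluate the sum directly, by expanding the sawtooth $f$ and then exploiting the reflection symmetry $a/q\mapsto (q-a)/q$ of the set $\mathscr{F}_{Q,k}^{(m)}$. First I would write, for each $j$,
\[
f(u+\gamma_j)=(u+\gamma_j)-\lfloor u+\gamma_j\rfloor-\tfrac12,
\]
and sum over $1\le j\le \mathcal{N}(Q,k,m)$ to obtain
\[
\sum_{j=1}^{\mathcal{N}(Q,k,m)} f(u+\gamma_j)=\mathcal{N}(Q,k,m)\,u+S_1-S_2-\frac{\mathcal{N}(Q,k,m)}{2},
\]
where $S_1=\sum_j\gamma_j$ and $S_2=\sum_j\lfloor u+\gamma_j\rfloor$. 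Everything thus reduces to evaluating the two auxiliary sums $S_1$ and $S_2$ in closed form.

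The key structural input is that $\gamma=a/q\mapsto 1-\gamma=(q-a)/q$ is an involution of $\mathscr{F}_{Q,k}^{(m)}$: it fixes the denominator $q$, hence preserves both the $k$-freeness and the congruence $q\equiv b\pmod m$, and it sends numerators coprime to $q$ to numerators coprime to $q$. Pairing each fraction with its reflection about $1/2$ immediately gives $S_1=\mathcal{N}(Q,k,m)/2$, up to the contribution of the unpaired endpoint. For $S_2$, I would observe that since $u\in[0,1]$ and $\gamma_j\in(0,1]$ we have $\lfloor u+\gamma_j\rfloor\in\{0,1\}$, with value $1$ precisely when $u+\gamma_j\ge1$, i.e. $\gamma_j\ge 1-u$; hence $S_2=\#\{j:\gamma_j\ge 1-u\}$. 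Applying the reflection once more converts the condition $\gamma_j\ge 1-u$ into $1-\gamma_j\le u$, so that $S_2$ counts the fractions lying $\le u$, which by the hypothesis that $u$ is situated between $\gamma_v$ and $\gamma_{v+1}$ equals $v$ (again up to the endpoint term). Substituting $S_1$ and $S_2$ back, the two copies of $\mathcal{N}(Q,k,m)/2$ cancel and one is left with $\mathcal{N}(Q,k,m)\,u-v$ plus a constant coming from the boundary.

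The delicate point, and the main obstacle, is exactly this endpoint bookkeeping: the set contains the fraction $1=1/1$ but not $0$, so $\gamma\mapsto 1-\gamma$ fails to be a symmetry at the boundary, and it is this single asymmetric term that shifts the constant from $0$ to $-\tfrac12$ (the same mechanism is visible through the distribution relation $\sum_{a=1}^{q} f(u+a/q)=f(qu)$ underlying Lemma \ref{f(gamma)}, where the $-\tfrac12$ persists). To handle this cleanly I would first restrict to $u$ in the open interval $(\gamma_v,\gamma_{v+1})$: there no $u+\gamma_j$ can be an integer, because $1-\gamma_j$ is itself a member of $\mathscr{F}_{Q,k}^{(m)}$ and so lies outside the fraction-free open interval, forcing $u\neq 1-\gamma_j$ for all $j$. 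Consequently $S_2$ is constant and the two reflection computations are valid uniformly on the interval, yielding $\sum_j f(u+\gamma_j)=\mathcal{N}(Q,k,m)\,u-v-\tfrac12$ throughout, as claimed.
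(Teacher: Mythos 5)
Your argument takes a genuinely different route from the paper's: the paper feeds the shifted sawtooth into Lemma \ref{f(gamma)}, uses the distribution relation $\sum_{a\le q}f\left(u+\frac{a}{q}\right)=f(qu)$ to obtain $\sum_{j}f(u+\gamma_j)=\sum_{n\le Q}f(nu)M_n\left(\frac{Q}{n}\right)$, and then recognizes $\sum_{n\le Q}\lfloor nu\rfloor M_n\left(\frac{Q}{n}\right)$ as the number of elements of $\mathscr{F}_{Q,k}^{(m)}$ not exceeding $u$, namely $v$. Your direct expansion plus the reflection $\gamma\mapsto 1-\gamma$ is more elementary, and your observation that $u+\gamma_j$ is never an integer for $u$ in the open interval $(\gamma_v,\gamma_{v+1})$ is correct. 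However, there is a genuine gap at exactly the point you flag and then defer: you assert that $1=1/1$ belongs to $\mathscr{F}_{Q,k}^{(m)}$ and that this single unpaired term produces the constant $-\frac{1}{2}$, but you never carry out the bookkeeping. The fraction $1$ can only arise from $q=1$ (since $(a,q)=1$ with $a=q$ forces $q=1$), and $q=1$ meets the congruence restriction only when $b\equiv 1\pmod{m}$. In that case your computation does close: $S_1=\frac{\mathcal{N}(Q,k,m)-1}{2}+1$, and $S_2=v+1$ because the term $\gamma_j=1$ always contributes $\lfloor u+1\rfloor=1$ while your bijection on the remaining $\mathcal{N}(Q,k,m)-1$ elements yields $v$; substitution then gives $\mathcal{N}(Q,k,m)u-v-\frac{1}{2}$, as claimed.

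But when $b\not\equiv 1\pmod{m}$, the reflection is an involution of the \emph{whole} set, so your own method forces $S_1=\frac{\mathcal{N}(Q,k,m)}{2}$ and $S_2=v$, and the sum equals $\mathcal{N}(Q,k,m)u-v$ with no $-\frac{1}{2}$: the step ``the boundary term shifts the constant to $-\frac{1}{2}$ throughout'' fails because there is no boundary term. This is not something more care with the same assertion could repair; carried out honestly, your approach shows the stated identity is false unless $b\equiv 1\pmod{m}$. For instance, $m=3$, $b=2$, $k=2$, $Q=5$ gives $\mathscr{F}_{Q,k}^{(m)}=\left\{\frac{1}{5},\frac{2}{5},\frac{1}{2},\frac{3}{5},\frac{4}{5}\right\}$, and for $u\in\left(\frac{2}{5},\frac{1}{2}\right)$ the left-hand side equals $5u-2$, not $5u-\frac{5}{2}$. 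It is worth noting that the paper's own proof hides the same assumption: its constant is really $-\frac{1}{2}\sum_{n\le Q}M_n\left(\frac{Q}{n}\right)$, and the substitution $N=nd$ collapses this sum to $\sum_{\substack{N\le Q,\ N\equiv b\pmod{m}}}\mu_k(N)^2\sum_{d\mid N}\mu(d)$, which equals $1$ precisely when $b\equiv 1\pmod{m}$ and $0$ otherwise. So to complete your proof you must either add the hypothesis $b\equiv 1\pmod{m}$ or state the conclusion as $\mathcal{N}(Q,k,m)u-v-\frac{1}{2}\,\mathbf{1}\left[b\equiv 1\pmod{m}\right]$; with that amendment your reflection argument is complete and, unlike the paper's, makes the dependence on the residue class visible.
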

\begin{proof}
  Similar to the proof of Lemma \ref{f(gamma)}, we can write
  \begin{align*}
      \sum_{j=1}^{\mathcal{N}(Q,k,m)}f(u+\gamma_j)&=\sum_{n\leq Q}f(nu)M_n\left(\frac{Q}{n}\right)
      =\sum_{n\leq Q}\left(nu-\lfloor nu\rfloor-\frac{1}{2}\right)M_n\left(\frac{Q}{n}\right)
      %&=u\sum_{n\leq Q}n\left(\mu_k(n)^2M_n^{(1)}\left(\frac{Q}{n}\right)+M_n^{(2)}\left(\frac{Q}{n}\right)\right)-\sum_{n\leq Q}\lfloor nu\rfloor\left(\mu_k(n)^2M_n^{(1)}\left(\frac{Q}{n}\right)+M_n^{(2)}\left(\frac{Q}{n}\right)\right)\\
      %&-\frac{1}{2}\sum_{n\leq Q}\left(\mu_k(n)^2M_n^{(1)}\left(\frac{Q}{n}\right)+M_n^{(2)}\left(\frac{Q}{n}\right)\right)\\
      \\&=\mathcal{N}(Q,k,m)u-\sum_{n\leq Q}\lfloor nu\rfloor M_n\left(\frac{Q}{n}\right)-\frac{1}{2}.
  \end{align*}
  %We next estimate the third sum on the above equation
  %\begin{align*}
      %S:&=\sum_{n\leq Q}\left(\mu_k(n)^2M_n^{(1)}\left(\frac{Q}{n}\right)+M_n^{(2)}\left(\frac{Q}{n}\right)\right)\\
     % &=\sum_{n\leq Q}\mu_k(n)^2\left(\sum_{\substack{d\leq \frac{Q}{n}\\ \gcd(d,n)=1}}\mu(d)+\sum_{\substack{d\leq \frac{Q}{n}\\ \gcd(d,n)\ne 1}}\mu(d)\mu_k(dn)^2\right)\\
      %&=\sum_{n\leq Q}\sum_{d\leq \frac{Q}{n}}\mu(d)\mu_k(dn)^2
      %=\sum_{d\leq Q}\mu(d)\sum_{\substack{n\leq {Q}\\d|n}}\mu_k(n)^2=\sum_{n\leq Q}\mu_k(n)^2\sum_{d|n}\mu(d)=1.
  %\end{align*}
 % Inserting the above estimate into \eqref{k17} leads to
  %\[G(u)=\mathcal{N}(Q,k)u-\sum_{n\leq Q}\lfloor nu\rfloor\left(\mu_k(n)^2M_n^{(1)}\left(\frac{Q}{n}\right)+M_n^{(2)}\left(\frac{Q}{n}\right)\right)-\frac{1}{2}. \]
  Note that the sum on the right-hand side of the above equation counts the number of fractions in $\mathscr{F}_{Q,k}^{(m)}$ less than or equal to $u$. Therefore between $\gamma_v$ and $\gamma_{v+1}$ the above sum is equal to $\mathcal{N}(Q,k,m)u-v-\frac{1}{2}$.
\end{proof}

%\begin{lem}\label{key lemma}
 %    Let $\Omega\subset [1, R]^2$ be a bounded region and $f$ is a continuously differentiable function on $\Omega.$ For any positive integers $r_1$ and $r_2$, we have
  %   \[\sum_{\substack{(a,b)\in \Omega\cap\mathbb{Z}^2\\(a,r_1)=(b,r_2)=(a,b)=1}}\mu_k(a)^2\mu_k(b)^2f(a,b)=\frac{6P_k(r_1,r_2)}{\pi^2}\iint_{\Omega}f(x,y)dxdy+E,\numberthis\label{2.3}\] 
   %  where
    %  \begin{align*} 
  %P_k(r_1,r_2)=\frac{\phi(r_1)\phi(r_2)}{r_1r_2}\prod_{\substack{p|r_1\\(p,r_2)=1}}\left(1-\frac{1}{p^k} \right)\prod_{\substack{p|r_2\\(p,r_1)=1}}\left(1-\frac{1}{p^k} \right)\prod_{p|r_1r_2}\left(1-\frac{1}{p^2}\right)^{-1}\prod_{\substack{p\\ (p,r_1r_2)=1}}\left(1-\frac{2}{p^{k-1}(p+1)}\right),
   %  \end{align*}
    % and
     %  \[ E\ll_k \left(\tau(r_1)\left\|{\frac{\partial f}{\partial x}}\right\|_{\infty}+\tau(r_2)\left\|{\frac{\partial f}{\partial y}}\right\|_{\infty}\right)\text{Area}(\Omega)\sqrt{R}\log^2 R+\|f\|_{\infty}(\tau(r_1)+\tau(r_2))R^{\frac{3}{2}}\log^2 R.\]
    %\end{lem}
\subsection{Weighted lattice point counting}
The results in this section are essential in the derivation of the $\nu$-level correlation function and the explicit expression for the pair correlation function. These results involve a variation of the following result of \cite{Cobeli} for lattice point counting in bounded domains. 
\begin{lem}[Lemma 1, \cite{Cobeli}]\label{Zaharescu lemma}
    Let $\Omega\subset [1, R]^2$ be a bounded region and assume that $f$ is a continuously differentiable function on $\Omega,$ then
   \[\sum_{\substack{(a,b)\in \Omega\cap\mathbb{Z}^2\\}}f(a,b)=\iint_{\Omega}f(x,y)dxdy+ \left(\left\|{\frac{\partial f}{\partial x}}\right\|_{\infty}+\left\|{\frac{\partial f}{\partial y}}\right\|_{\infty}\right)\text{Area}(\Omega)\log R+\left\|f \right\|_{\infty}(R+\text{length}(\delta\Omega)\log R).\] 
\end{lem}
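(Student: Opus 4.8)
The plan is to prove the estimate by reducing the two--dimensional sum to iterated one--dimensional summations and comparing each with the corresponding integral via the Euler--Maclaurin formula, while carefully tracking the boundary effects of $\Omega$ and of the ambient box $[1,R]^2$. First I would fix an integer $a\in[1,R]$ and examine the vertical slice $I_a=\{y:(a,y)\in\Omega\}$, which, under the mild regularity assumption that each vertical line meets $\partial\Omega$ in finitely many points, is a finite union of intervals. Applying the one--variable Euler--Maclaurin summation to the inner sum on each component of $I_a$ gives
\[\sum_{b\in\mathbb{Z},\,(a,b)\in\Omega}f(a,b)=\int_{I_a}f(a,y)\,dy+O\!\left(\|f\|_\infty\,\kappa(a)\right)+O\!\left(\Big\|\tfrac{\partial f}{\partial y}\Big\|_\infty|I_a|\right),\]
where $\kappa(a)$ is the number of components of $I_a$; the first error arises from the endpoint terms and the second from the remainder integral $\int(\{y\}-\tfrac12)\partial_y f\,dy$.

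Next I would sum over $a=1,\dots,\lfloor R\rfloor$. The $y$--derivative remainders accumulate to $O(\|\partial_y f\|_\infty\sum_a|I_a|)=O(\|\partial_y f\|_\infty\,\mathrm{Area}(\Omega))$ after comparing $\sum_a|I_a|$ with $\int|I_x|\,dx=\mathrm{Area}(\Omega)$, the discrepancy being controlled by the variation of $x\mapsto|I_x|$. The endpoint errors accumulate to $O(\|f\|_\infty\sum_a\kappa(a))$, and since each component of $I_a$ contributes two points of $\partial\Omega$ on the line $x=a$, the quantity $\sum_a\kappa(a)$ is bounded by the number of intersections of $\partial\Omega$ with the vertical integer lines, hence in terms of $\mathrm{length}(\partial\Omega)$ and $R$ by a Cauchy--Crofton/coarea estimate. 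The surviving main term is $\sum_a G(a)$ with $G(x)=\int_{I_x}f(x,y)\,dy$; a second application of Euler--Maclaurin in $x$ turns this into $\int_1^{R}G(x)\,dx+\int(\{x\}-\tfrac12)G'(x)\,dx+O(\|G\|_\infty)$. Here $\int_1^R G=\iint_\Omega f$ is the desired main term, the endpoint contribution is $O(\|G\|_\infty)=O(\|f\|_\infty R)$ because $|G(x)|\le\|f\|_\infty|I_x|\le\|f\|_\infty R$ (this is precisely the $\|f\|_\infty R$ term), and writing $G'(x)=\int_{I_x}\partial_x f\,dy+(\text{moving-endpoint terms})$ yields $O(\|\partial_x f\|_\infty\,\mathrm{Area}(\Omega))$ together with $O(\|f\|_\infty\,\mathrm{length}(\partial\Omega))$ from the endpoints of $I_x$ sweeping along $\partial\Omega$.

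Collecting these pieces reproduces the asserted shape. The main obstacle is the rigorous control of the boundary: relating both $\sum_a\kappa(a)$ and the total variation of the endpoints of the slices $I_x$ to $\mathrm{length}(\partial\Omega)$ requires regularity of $\Omega$, and a coarse treatment of these boundary crossings (for instance covering $\partial\Omega$ at dyadic scales, or applying a uniform lattice-point-near-a-curve bound) is exactly what forces the extra $\log R$ factors onto the $\mathrm{Area}(\Omega)$ and $\mathrm{length}(\partial\Omega)$ terms in the stated estimate; under stronger convexity or smoothness hypotheses on $\Omega$ these logarithms can be removed, so the quoted form is a convenient uniform bound. The only remaining care is checking that Euler--Maclaurin applies on each slice, which is immediate from the assumed $C^1$ regularity of $f$.
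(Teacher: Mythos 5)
The paper itself offers no proof of this lemma: it is imported verbatim from \cite{Cobeli} (Lemma 1 there) and used as a black box to derive the weighted variants (Lemmas \ref{key lemma} and \ref{key lemma1}), so there is no internal argument to compare yours against; your double Euler--Maclaurin slicing is the standard way such lattice-point-versus-integral estimates are proved, and the overall architecture (inner sum over each vertical slice, then an outer summation in $x$, with the $\left\|f\right\|_{\infty}R$ term coming from the $O(\left\|G\right\|_{\infty})$ endpoint contribution and the $\left\|f\right\|_{\infty}\,\mathrm{length}(\partial\Omega)$ term from the moving endpoints of the slices) is sound and in fact yields a bound \emph{without} the $\log R$ factors --- the quoted statement has slack there, since the logarithms genuinely arise only in the M\"obius-sieved, coprimality-constrained versions that the present paper proves from it.

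There is, however, one step you should not paper over: your claim that $\sum_a\kappa(a)$ is controlled ``in terms of $\mathrm{length}(\partial\Omega)$ and $R$ by a Cauchy--Crofton/coarea estimate'' is false for general bounded regions, and indeed the lemma as literally stated (for an arbitrary ``bounded region'') is false. A rectifiable curve can cross a single vertical integer line arbitrarily many times at arbitrarily small length cost, so crossings are not bounded by length; worse, taking $\Omega$ to be the union of disks of radius $\epsilon$ centered at every lattice point of $[1,R]^2$ makes the lattice sum of size $\left\|f\right\|_{\infty}R^2$ while the integral and every error term on the right-hand side stay $O\left(\left\|f\right\|_{\infty}(R+\epsilon R^2\log R)\right)$, which contradicts the bound as $\epsilon\to 0$. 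So the regularity you flagged is not a removable technicality but an essential hypothesis: the source states the result for convex regions (equivalently, regions whose boundary meets each vertical line in $O(1)$ points), under which $\kappa(a)\leq 1$, $\sum_a\kappa(a)\leq R$ (this is precisely the $\left\|f\right\|_{\infty}R$ term), and the slice endpoints are of bounded variation dominated by $\mathrm{length}(\partial\Omega)$, so your argument closes. One further bookkeeping point: bounding the inner remainder by $\left\|\partial_y f\right\|_{\infty}|I_a|$ and then summing gives an extra $\left\|\partial_y f\right\|_{\infty}R$ contribution from the total-variation discrepancy between $\sum_a|I_a|$ and $\mathrm{Area}(\Omega)$ (visible for a thin vertical strip straddling an integer line), a term absent from the statement; it is repaired by using the sharper per-unit-interval error $\min\left(2\left\|f\right\|_{\infty},\left\|\partial_y f\right\|_{\infty}\right)$, after which it is absorbed into $\left\|f\right\|_{\infty}R$. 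Since the paper only ever applies the lemma to rectangles ($\Omega_{(d_1,d_2)}$, $\Gamma$) and to intersections of finitely many half-planes ($Q\Omega_{n,e,k,\Delta}$), your proof, completed with the convexity hypothesis made explicit, fully suffices for its use here.
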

We prove weighted versions of the above result consisting of coprimality constraints and twisted by M\"{o}bius functions. This involves several significant modifications. In particular, we need to deal with
the extra M\"{o}bius twists in the sums and handle the extra coprimality conditions by carefully reducing the regions using several change of variables.

\begin{lem}\label{key lemma}
Let $R>1$ be a real number and let $\delta_1$ and $\delta_2$ be $k$-free numbers. Let $\Omega\subset [1, R]^2$ be a bounded region and assume that $f$ is a continuously differentiable function on $\Omega.$ For any positive integers $r_1$ and $r_2$, we have 
   \[\sum_{\substack{(a,b)\in \Omega\cap\mathbb{Z}^2\\(a,r_1)=(b,r_2)=(a,b)=1}}\mu_k(a\delta_1)^2\mu_k(b\delta_2)^2f(a,b)=\frac{6P_{r_1,r_2}^{k}(\delta_1,\delta_2)}{\pi^2}\iint_{\Omega}f(x,y)dxdy+E(r_1,r_2),\numberthis\label{2.3}\] 
   where
   \begin{align*}
       P_{r_1,r_2}^{k}(\delta_1,\delta_2)=&\frac{\phi(r_1)\phi(r_2)}{r_1r_2}\prod_{p|r_1r_2}\left(1-\frac{1}{p^2}\right)^{-1} \prod_{\substack{p|r_1\\(p,r_2)=1}}\left(1-\frac{\gcd(p^k,\delta_2)}{p^k} \right)\prod_{\substack{p|r_2\\(p,r_1)=1}}\left(1-\frac{\gcd(p^k,\delta_1)}{p^{k}} \right) \\&\times\prod_{\substack{p\\(p,r_1r_2)=1}}\left(1-\frac{\gcd(p^k,\delta_2)}{p^{k-1}(p+1)} \right)\left(1-\frac{\gcd(p^k,\delta_1)}{p^{k-1}(p+1)}\left(1-\frac{\gcd(p^k,\delta_2)}{p^{k-1}(p+1)} \right)^{-1} \right)
   \end{align*}
   and
       \[ E(r_1,r_2)\ll_k \left(\tau\left(r_1 \right)\left\|{\frac{\partial f}{\partial x}}\right\|_{\infty}+\tau\left(r_2 \right)\left\|{\frac{\partial f}{\partial y}}\right\|_{\infty}\right)\text{Area}(\Omega)R^{\frac{1}{k}}\log^2R+R^{1+\frac{1}{k}}\log^2R\left\|f \right\|_{\infty}\left(\tau(r_1)+\tau(r_2) \right).\]
\end{lem}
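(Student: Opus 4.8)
The plan is to reduce the weighted, constrained sum on the left of \eqref{2.3} to a finite family of \emph{unweighted} lattice-point counts over sublattices of $\mathbb{Z}^2$, each handled by Lemma \ref{Zaharescu lemma}, and then to reassemble the main terms into the Euler product $P_{r_1,r_2}^k(\delta_1,\delta_2)$. First I would detect every constraint by M\"obius inversion. The three coprimality conditions are opened as $\mathbf{1}_{(a,b)=1}=\sum_{e\mid (a,b)}\mu(e)$, $\mathbf{1}_{(a,r_1)=1}=\sum_{g_1\mid (a,r_1)}\mu(g_1)$ and $\mathbf{1}_{(b,r_2)=1}=\sum_{g_2\mid (b,r_2)}\mu(g_2)$, while the two $k$-free weights are opened through $\mu_k(n)^2=\sum_{d^k\mid n}\mu(d)$, giving $\mu_k(a\delta_1)^2=\sum_{d_1^k\mid a\delta_1}\mu(d_1)$ and $\mu_k(b\delta_2)^2=\sum_{d_2^k\mid b\delta_2}\mu(d_2)$. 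Since $\mu(d_i)$ restricts to squarefree $d_i$ and $\delta_i$ is $k$-free, the divisibility $d_1^k\mid a\delta_1$ is equivalent to the single congruence $c_1\mid a$ with $c_1:=\prod_{p\mid d_1}p^{\,k-v_p(\delta_1)}=\prod_{p\mid d_1}\frac{p^{k}}{\gcd(p^k,\delta_1)}$, and likewise $d_2^k\mid b\delta_2\iff c_2\mid b$; this is the step where the factors $\gcd(p^k,\delta_i)$ appearing in $P_{r_1,r_2}^k$ first enter.

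After interchanging summation, the inner sum runs over $(a,b)\in\Omega\cap\mathbb{Z}^2$ subject to $a\equiv 0\pmod{A}$ and $b\equiv 0\pmod{B}$, where $A=\lcm(e,g_1,c_1)$ and $B=\lcm(e,g_2,c_2)$. Writing $a=Aa'$, $b=Bb'$ turns this into an unconstrained lattice sum of $\tilde f(a',b'):=f(Aa',Bb')$ over the scaled region $\Omega'=\{(a/A,b/B):(a,b)\in\Omega\}$, to which Lemma \ref{Zaharescu lemma} applies (after a harmless extension of the reference box, since $\Omega'$ may dip below $1$), with main term $\frac{1}{AB}\iint_\Omega f\,dx\,dy$ and the three Cobeli error contributions rescaled by the substitution, so that $\|\partial_{a'}\tilde f\|_\infty=A\|\partial_x f\|_\infty$, $\operatorname{Area}(\Omega')=\operatorname{Area}(\Omega)/AB$, and the bounding box of $\Omega'$ has sides $R/A$, $R/B$. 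Collecting the main terms gives the coefficient
\[ C=\sum_{e\ge 1}\ \sum_{g_1\mid r_1}\ \sum_{g_2\mid r_2}\ \sum_{d_1,d_2}\frac{\mu(e)\mu(g_1)\mu(g_2)\mu(d_1)\mu(d_2)}{\lcm(e,g_1,c_1)\,\lcm(e,g_2,c_2)}, \]
a sum that is multiplicative in $p$; a prime-by-prime evaluation, splitting into the cases $p\mid r_1$, $p\mid r_2$ and $(p,r_1r_2)=1$ and recording the local densities $1/c_i=\prod_{p\mid d_i}\gcd(p^k,\delta_i)/p^k$, collapses $C$ to exactly $\frac{6}{\pi^2}P_{r_1,r_2}^k(\delta_1,\delta_2)$, the factor $6/\pi^2=\zeta(2)^{-1}$ arising from $\sum_e\mu(e)/e^2$ produced by the $(a,b)=1$ condition.

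It then remains to sum the error terms, where the admissible ranges are the key input: $c_i\mid a$ (resp. $b$) with $a,b\le R$ force $d_i\le R^{1/k}$, while $e\mid(a,b)$ forces $e\le R$ and $g_i\mid r_i$. The rescaled derivative term $\bigl(\|\partial_x f\|_\infty/B+\|\partial_y f\|_\infty/A\bigr)\operatorname{Area}(\Omega)\log R$, summed over the free variable $d_1$ (which does not appear in $B$) up to $R^{1/k}$, over $g_1\mid r_1$ to produce $\tau(r_1)$, and over $e\le R$ to produce $\sum_{e\le R}1/e\ll\log R$ (the sums $\sum_{d_2}1/c_2$ and $\sum_{g_2\mid r_2}$ being bounded), yields the first group of $E(r_1,r_2)$ with its $R^{1/k}\log^2 R$, and the $\|\partial_y f\|$ term is symmetric. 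The bounding-box term $\|f\|_\infty\max(R/A,R/B)$ together with the boundary term $\|f\|_\infty\operatorname{length}(\partial\Omega')\log R$ are summed in the same way, contributing $R^{1+1/k}\log^2 R\,\|f\|_\infty(\tau(r_1)+\tau(r_2))$.

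The main obstacle, in my view, is twofold and lies in this last bookkeeping together with the main-term evaluation. One must verify that the congruence moduli $A,B$ factor correctly through the $\lcm$'s so that the local factors of $C$ reproduce the intricate definition of $P_{r_1,r_2}^k(\delta_1,\delta_2)$, in particular the asymmetric final Euler factor coupling $\delta_1$ and $\delta_2$; and one must track the ranges of the free M\"obius variables precisely enough to obtain the stated powers $R^{1/k}$, $R^{1+1/k}$ and $\log^2 R$ rather than weaker bounds. The coupling of $a$ and $b$ through the common modulus $e$ coming from $(a,b)=1$ is exactly what prevents the sum from splitting as a product and is the technical heart of both the main-term and error computations.
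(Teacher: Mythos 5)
Your proposal is correct and rests on the same skeleton as the paper's proof: detect every constraint by M\"obius inversion, reduce to the lattice-point count of Lemma \ref{Zaharescu lemma} on dilated sublattices, and reassemble the density into the Euler product $P_{r_1,r_2}^k(\delta_1,\delta_2)$; your reduction $d_1^k\mid a\delta_1\iff c_1\mid a$ with $c_1=d_1^k/\gcd(d_1^k,\delta_1)$ is exactly the paper's use of ``$a\mid bc$ iff $\frac{a}{\gcd(a,c)}\mid b$'', and it is precisely here that the $k$-freeness of $\delta_1,\delta_2$ is consumed in both arguments. The genuine difference is organizational: the paper expands \emph{sequentially} (first $d_1,d_2$ with a substitution that pushes coprimality constraints onto the outer variables, then the $(a_1,b_1)=1$ variable $d$ subject to $(d,r_1r_2c_1c_2)=1$, then $s,t$), applies the Cobeli lemma only at the innermost level, and unwinds nested geometric series, which is how the asymmetric coupled Euler factor in $P_{r_1,r_2}^k$ emerges directly; you expand all five M\"obius variables at once into a flat quintuple sum with moduli $A=\lcm(e,g_1,c_1)$, $B=\lcm(e,g_2,c_2)$ and \emph{no} constraints among the variables, apply the Cobeli lemma once, and must then verify the prime-local identity for $C$. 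That verification does go through: at $(p,r_1r_2)=1$ the eight local terms collapse to $\left(1-\frac{1}{p^2}\right)\left(1-\frac{\gcd(p^k,\delta_1)+\gcd(p^k,\delta_2)}{p^{k-1}(p+1)}\right)$, which is the paper's coupled factor multiplied out, at $p\mid r_1$, $(p,r_2)=1$ they give $\left(1-\frac1p\right)\left(1-\frac{\gcd(p^k,\delta_2)}{p^k}\right)$, and at $p\mid\gcd(r_1,r_2)$ they give $\left(1-\frac1p\right)^2$, all matching $\frac{6}{\pi^2}P_{r_1,r_2}^k$. So your route buys a cleaner, single multiplicative main-term computation at the cost of this local case analysis, while the paper's nested route avoids it but pays in bookkeeping of intermediate constraints. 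Two small points you should make explicit when writing it up: $1/\lcm(e,g_2,c_2)$ does not factor as $(1/e)(1/c_2)$, so to get $\log R$ from the $e$-sum together with a convergent $d_2$-sum, write $1/\lcm(e,c_2)=\gcd(e,c_2)/(ec_2)$ and use $\sum_{e\le R}\gcd(e,c_2)/e\ll\tau(c_2)\log R$ with $\sum_{d_2}\tau(c_2)/c_2<\infty$, which still fits the stated $\log^2R$ budget; and the truncated ranges $e\le R$, $d_i\ll R^{1/k}$ must be completed to the infinite sum $C$ with tail errors $\ll\|f\|_\infty R$, as the paper does around \eqref{k54}--\eqref{k55}. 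Like the paper, your bound silently absorbs $\delta_1,\delta_2$-dependent factors (e.g.\ $d_i\le(R\delta_i)^{1/k}$ and $\sum_{d_2}1/c_2\ll\delta_2$) into the implied constant, which is harmless in the intended application where the $\delta$'s are bounded.
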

\begin{proof}
We have
    \begin{align*}
        M:&=\sum_{\substack{(a,b)\in\Omega\cap\mathbb{Z}^2\\(a,r_1)=(b,r_2)=(a,b)=1}}\mu_k(a\delta_1)^2\mu_k(b\delta_2)^2f(a,b)
        %=\sum_{\substack{(a,b)\in\Omega\cap\mathbb{Z}^2\\(a,r_1)=(b,r_2)=(a,b)=1}}f(a,b)\sum_{d_1^k|a\delta_1}\mu(d_1)\sum_{d_2^k|b\delta_2}\mu(d_2)\\
        =\sum_{\substack{d_1^k\leq R\delta_1\\ d_2^k\leq R\delta_2}}\mu(d_1)\mu(d_2)\sum_{\substack{(a,b)\in\Omega\cap\mathbb{Z}^2\\(a,r_1)=(b,r_2)=(a,b)=1\\d_1^k|a\delta_1, d_2^k|b\delta_2}}f(a,b).
    \end{align*}
 Using the fact that $a|bc$ if and only if $\frac{a}{\gcd(a,c)}|b$, the above identity can be expressed as   
 \begin{align*}
   M&=\sum_{\substack{d_1^k\leq R\delta_1\\ d_2^k\leq R\delta_2}}\mu(d_1)\mu(d_2)\sum_{\substack{(a,b)\in\Omega\cap\mathbb{Z}^2\\(a,r_1)=(b,r_2)=(a,b)=1\\\frac{d_1^k}{\gcd(d_1^k,\delta_1)}|a, \frac{d_2^k}{\gcd(d_2^k,\delta_2)}|b}}f(a,b) \\
   %&=\sum_{\substack{d_1^k\leq R\delta_1\\ d_2^k\leq R\delta_2}}\mu(d_1)\mu(d_2)\sum_{\substack{(a,b)\in\Omega_{(d_1,d_2)}\cap\mathbb{Z}^2\\\left(\frac{d_1^ka_1}{\gcd(d_1^k,\delta_1)},r_1\right)=\left(\frac{d_2^kb_1}{\gcd(d_2^k,\delta_2)},r_2\right)=1\\\left(\frac{d_1^ka_1}{\gcd(d_1^k,\delta_1)},\frac{d_2^kb_1}{\gcd(d_2^k,\delta_2)}\right)=1\\}}g(a_1,b_1)\\
   &=\sum_{\substack{d_1^k\leq R\delta_1, d_2^k\leq R\delta_2\\ \left(\frac{d_1^k}{\gcd(d_1^k,\delta_1)},r_1\right)=\left(\frac{d_2^k}{\gcd(d_2^k,\delta_2)},r_2\right)=1\\\left(\frac{d_1^k}{\gcd(d_1^k,\delta_1)},\frac{d_2^k}{\gcd(d_2^k,\delta_2)}\right)=1}}\mu(d_1)\mu(d_2)\sum_{\substack{(a_1,b_1)\in\Omega_{(d_1,d_2)}\cap\mathbb{Z}^2\\\left(a_1,\frac{r_1d_2^k}{\gcd(d_2^k,\delta_2)}\right)=1=\left(b_1,\frac{r_2d_1^k}{\gcd(d_1^k,\delta_1)}\right)\\(a_1,b_1)=1}}g(a_1,b_1),\numberthis\label{k56}
 \end{align*}
 where $g(a_1,b_1)=f\left(\frac{d_1^ka_1}{\gcd(d_1^k,\delta_1)},\frac{d_2^kb_1}{\gcd(d_2^k,\delta_2)}\right)$ and
 \[\Omega_{(d_1,d_2)}=\left\{(x,y)\ :\ x\in\frac{\gcd(d_1^k,\delta_1)}{d_1^k}[1, R],\ y\in\frac{\gcd(d_2^k,\delta_2)}{d_2^k}[1, R] \right\}.\]
We first estimate the inner sum in the above identity. Therefore
\begin{align*}
M^{(1)}:&=\sum_{\substack{(a_1,b_1)\in\Omega_{(d_1,d_2)}\cap\mathbb{Z}^2\\\left(a_1,\frac{r_1d_2^k}{\gcd(d_2^k,\delta_2)}\right)=1=\left(b_1,\frac{r_2d_1^k}{\gcd(d_1^k,\delta_1)}\right)\\(a_1,b_1)=1}}g(a_1,b_1)
=\sum_{\substack{(a_1,b_1)\in\Omega_{(d_1,d_2)}\cap\mathbb{Z}^2\\\left(a_1,\frac{r_1d_2^k}{\gcd(d_2^k,\delta_2)}\right)=1=\left(b_1,\frac{r_2d_1^k}{\gcd(d_1^k,\delta_1)}\right)\\}}g(a_1,b_1)\sum_{d|\gcd(a_1,b_1)}\mu(d)\\
&=\sum_{\substack{d\leq R\min\left(\frac{\gcd(d_1^k,\delta_1)}{d_1^k},\frac{\gcd(d_2^k,\delta_2)}{d_2^k} \right)\\\left(d,\frac{r_1r_2d_1^kd_2^k}{\gcd(d_1^k,\delta_1)\gcd(d_2^k,\delta_2)}\right)=1}} \mu(d)\sum_{\substack{(a_2,b_2)\in\frac{1}{d}\Omega_{(d_1,d_2)}\cap\mathbb{Z}^2\\\left(a_2,\frac{r_1d_2^k}{\gcd(d_2^k,\delta_2)}\right)=1=\left(b_2,\frac{r_2d_1^k}{\gcd(d_1^k,\delta_1)}\right)}}g(da_2,db_2)\\
&=\sum_{\substack{d\leq R\min\left(\frac{\gcd(d_1^k,\delta_1)}{d_1^k},\frac{\gcd(d_2^k,\delta_2)}{d_2^k} \right)\\\left(d,\frac{r_1r_2d_1^kd_2^k}{\gcd(d_1^k,\delta_1)\gcd(d_2^k,\delta_2)}\right)=1}} \mu(d)\sum_{\substack{(a_2,b_2)\in\frac{1}{d}\Omega_{(d_1,d_2)}\cap\mathbb{Z}^2}}g(da_2,db_2)\sum_{\substack{s|a_2\\s|\frac{r_1d_2^k}{\gcd(d_2^k,\delta_2)}}}\mu(s)\sum_{\substack{t|b_2\\t|\frac{r_2d_1^k}{\gcd(d_1^k,\delta_1)}}}\mu(t)\\
&=\sum_{\substack{d\leq R\min\left(\frac{\gcd(d_1^k,\delta_1)}{d_1^k},\frac{\gcd(d_2^k,\delta_2)}{d_2^k} \right)\\\left(d,\frac{r_1r_2d_1^kd_2^k}{\gcd(d_1^k,\delta_1)\gcd(d_2^k,\delta_2)}\right)=1}} \mu(d)\sum_{\substack{s|\frac{r_1d_2^k}{\gcd(d_2^k,\delta_2)}}}\mu(s)\sum_{\substack{t|\frac{r_2d_1^k}{\gcd(d_1^k,\delta_1)}}}\mu(t)\sum_{\substack{(a_3,b_3)\in\Gamma\cap\mathbb{Z}^2}}h(a_3,b_3),\numberthis\label{k52}
\end{align*}
where $h(a_3,b_3)=f\left(\frac{dsd_1^ka_3}{\gcd(d_1^k,\delta_1)},\frac{dtd_2^kb_3}{\gcd(d_2^k,\delta_2)}\right)$ and
\[\Gamma=\left\{(x,y)\ :\ x\in\frac{\gcd(d_1^k,\delta_1)}{dsd_1^k}[1, R],\ y\in\frac{\gcd(d_2^k,\delta_2)}{dtd_2^k}[1, R] \right\}. \]
We use Lemma \ref{Zaharescu lemma} to estimate the innermost sum in \eqref{k52}
\begin{align*}
\sum_{\substack{(a_3,b_3)\in\Gamma\cap\mathbb{Z}^2}}h(a_3,b_3)=&\iint_{\Gamma}h(x,y)dxdy\\&+\BigO{\left(\left\|{\frac{\partial h}{\partial x}}\right\|_{\infty}+\left\|{\frac{\partial h}{\partial y}}\right\|_{\infty}\right)\text{Area}(\Gamma)+\left\|h \right\|_{\infty}(1+\text{length}(\partial \Gamma))}\\
=&\frac{\gcd(d_1^k,\delta_1)\gcd(d_2^k,\delta_2)}{std^2d_1^kd_2^k}\iint_{\Omega}f(x,y)dxdy+\BigO{\left\|f \right\|_{\infty}\frac{R}{d}\left(\frac{\gcd(d_1^k,\delta_1)}{sd_1^k}+\frac{\gcd(d_2^k,\delta_2)}{td_2^k}\right)}\\&+\BigO{\left(\frac{\gcd(d_2^k,\delta_2)}{dtd_2^k}\left\|{\frac{\partial f}{\partial x}}\right\|_{\infty}+\frac{\gcd(d_1^k,\delta_1)}{dsd_1^k}\left\|{\frac{\partial f}{\partial y}}\right\|_{\infty}\right)\text{Area}(\Omega)}.
\end{align*}
By invoking the above estimate into \eqref{k52}, we obtain
\begin{align*}
    M^{(1)}=&\frac{\gcd(d_1^k,\delta_1)\gcd(d_2^k,\delta_2)}{d_1^kd_2^k}\sum_{\substack{d\leq R\min\left(\frac{\gcd(d_1^k,\delta_1)}{d_1^k},\frac{\gcd(d_2^k,\delta_2)}{d_2^k} \right)\\\left(d,\frac{r_1r_2d_1^kd_2^k}{\gcd(d_1^k,\delta_1)\gcd(d_2^k,\delta_2)}\right)=1}} \frac{\mu(d)}{d^2}\sum_{\substack{s|\frac{r_1d_2^k}{\gcd(d_2^k,\delta_2)}, t|\frac{r_2d_1^k}{\gcd(d_1^k,\delta_1)}}}\frac{\mu(s)\mu(t)}{st}\iint_{\Omega}f(x,y)dxdy\\&+\BigO{\left(\tau\left(\frac{r_1d_2^k}{\gcd(d_2^k,\delta_2)} \right)\frac{\gcd(d_2^k,\delta_2)}{d_2^k}\left\|{\frac{\partial f}{\partial x}}\right\|_{\infty}+\tau\left(\frac{r_2d_1^k}{\gcd(d_1^k,\delta_1)} \right)\frac{\gcd(d_1^k,\delta_1)}{d_1^k}\left\|{\frac{\partial f}{\partial y}}\right\|_{\infty}\right)\text{Area}(\Omega)\log^2R}\\&+\BigO{R\log^2R\left\|f \right\|_{\infty}\left(\tau\left(\frac{r_2d_1^k}{\gcd(d_1^k,\delta_1)} \right)\frac{\gcd(d_1^k,\delta_1)}{d_1^k}+\tau\left(\frac{r_1d_2^k}{\gcd(d_2^k,\delta_2)} \right)\frac{\gcd(d_2^k,\delta_2)}{d_2^k}\right)}.\numberthis\label{k53}
\end{align*}
We next estimate the summation in \eqref{k53}
\begin{align*}
    M^{(11)}:&=\sum_{\substack{d\leq R\min\left(\frac{\gcd(d_1^k,\delta_1)}{d_1^k},\frac{\gcd(d_2^k,\delta_2)}{d_2^k} \right)\\\left(d,\frac{r_1r_2d_1^kd_2^k}{\gcd(d_1^k,\delta_1)\gcd(d_2^k,\delta_2)}\right)=1}} \frac{\mu(d)}{d^2}\sum_{\substack{s|\frac{r_1d_2^k}{\gcd(d_2^k,\delta_2)}, t|\frac{r_2d_1^k}{\gcd(d_1^k,\delta_1)}}}\frac{\mu(s)\mu(t)}{st}\\
    &=\sum_{\substack{d=1\\\left(d,\frac{r_1r_2d_1^kd_2^k}{\gcd(d_1^k,\delta_1)\gcd(d_2^k,\delta_2)}\right)=1}}^{\infty} \frac{\mu(d)}{d^2}\prod_{p|\frac{r_1d_2^k}{\gcd(d_2^k,\delta_2)}}\left(1-\frac{1}{p} \right)\prod_{p|\frac{r_2d_1^k}{\gcd(d_1^k,\delta_1)}}\left(1-\frac{1}{p} \right)+\BigO{\frac{\max(d_1^k, d_2^k)}{R}}\\
    &=\prod_{p|\frac{r_1d_2^k}{\gcd(d_2^k,\delta_2)}}\left(1-\frac{1}{p} \right)\prod_{p|\frac{r_2d_1^k}{\gcd(d_1^k,\delta_1)}}\left(1-\frac{1}{p} \right)\prod_{\substack{p\\\left(p,\frac{r_1r_2d_1^kd_2^k}{\gcd(d_1^k,\delta_1)\gcd(d_2^k,\delta_2)}\right)=1}}\left(1-\frac{1}{p^2}\right)+\BigO{\frac{\max(d_1^k, d_2^k)}{R}}. \numberthis\label{k54}
\end{align*}
The above estimate in conjunction with \eqref{k53} and \eqref{k56} gives
\begin{align*}
    M=&\frac{1}{\zeta(2)}\iint_{\Omega}f(x,y)dxdy\sum_{\substack{d_1^k\leq R\delta_1, d_2^k\leq R\delta_2\\ \left(\frac{d_1^k}{\gcd(d_1^k,\delta_1)},r_1\right)=\left(\frac{d_2^k}{\gcd(d_2^k,\delta_2)},r_2\right)=1\\\left(\frac{d_1^k}{\gcd(d_1^k,\delta_1)},\frac{d_2^k}{\gcd(d_2^k,\delta_2)}\right)=1}}\frac{\mu(d_1)\mu(d_2)\gcd(d_1^k,\delta_1)\gcd(d_2^k,\delta_2)}{d_1^kd_2^k}\prod_{p|\frac{r_1d_2^k}{\gcd(d_2^k,\delta_2)}}\left(1-\frac{1}{p} \right)\\ &\times\prod_{p|\frac{r_2d_1^k}{\gcd(d_1^k,\delta_1)}}\left(1-\frac{1}{p} \right) \prod_{p|\frac{r_1r_2d_1^kd_2^k}{\gcd(d_1^k,\delta_1)\gcd(d_2^k,\delta_2)}}\left(1-\frac{1}{p^2} \right)^{-1}+\BigO{R^{1+\frac{1}{k}}\log^2R\left\|f \right\|_{\infty}\left(\tau(r_1)+\tau(r_2) \right)}
    \\&+\BigO{\left(\tau\left(r_1 \right)\left\|{\frac{\partial f}{\partial x}}\right\|_{\infty}+\tau\left(r_2 \right)\left\|{\frac{\partial f}{\partial y}}\right\|_{\infty}\right)\text{Area}(\Omega)R^{\frac{1}{k}}\log^2R}\\
    =&\frac{1}{\zeta(2)}\prod_{p|r_1r_2}\left(1-\frac{1}{p^2} \right)^{-1}\prod_{p|r_1}\left(1-\frac{1}{p}\right)\prod_{p|r_2}\left(1-\frac{1}{p}\right)\iint_{\Omega}f(x,y)dxdy\sum_{\substack{d_1, d_2=1\\ \left(\frac{d_1^k}{\gcd(d_1^k,\delta_1)},r_1\right)=\left(\frac{d_2^k}{\gcd(d_2^k,\delta_2)},r_2\right)=1\\\left(\frac{d_1^k}{\gcd(d_1^k,\delta_1)},\frac{d_2^k}{\gcd(d_2^k,\delta_2)}\right)=1}}^{\infty}\\&\times\frac{\mu(d_1)\mu(d_2)\gcd(d_1^k,\delta_1)\gcd(d_2^k,\delta_2)}{d_1^kd_2^k}\prod_{\substack{p|\frac{d_1^k}{\gcd(d_1^k,\delta_1)}\\(p,r_2)=1}}\left(1-\frac{1}{p} \right)\prod_{\substack{p|\frac{d_2^k}{\gcd(d_2^k,\delta_2)}\\(p,r_1)=1}}\left(1-\frac{1}{p} \right)\\&\times \prod_{\substack{p|\frac{d_1^kd_2^k}{\gcd(d_1^k,\delta_1)\gcd(d_2^k,\delta_2)}\\(p,r_1r_2)=1}}\left(1-\frac{1}{p^2} \right)^{-1}+E(r_1,r_2).\numberthis\label{k55}
\end{align*}
We next estimate the summation in \eqref{k55}; let us denote it by $M_{r_1,r_2}$. Since $\delta_1$ and $\delta_2$ are $k$-free, it follows that $\left(\frac{d_1^k}{\gcd(d_1^k,\delta_1)},\frac{d_2^k}{\gcd(d_2^k,\delta_2)}\right)=1$ if and only if $(d_1,d_2)=1$; that $p|\frac{d_1^k}{\gcd(d_1^k,\delta_1)}$ if and only if $p|d_1$; and that $p|\frac{d_2^k}{\gcd(d_2^k,\delta_2)}$ if and only if $p|d_2$. Therefore the sum in \eqref{k55} becomes
%\begin{align*}
 %  M_{r_1, r_2}:&= \sum_{\substack{d_1^k\leq R\delta_1, d_2^k\leq R\delta_2\\ \left(\frac{d_1^k}{\gcd(d_1^k,\delta_1)},r_1\right)=\left(\frac{d_2^k}{\gcd(d_2^k,\delta_2)},r_2\right)=1\\\left(\frac{d_1^k}{\gcd(d_1^k,\delta_1)},\frac{d_2^k}{\gcd(d_2^k,\delta_2)}\right)=1}}\frac{\mu(d_1)\mu(d_2)\gcd(d_1^k,\delta_1)\gcd(d_2^k,\delta_2)}{d_1^kd_2^k}\prod_{\substack{p|\frac{d_1^k}{\gcd(d_1^k,\delta_1)}\\(p,r_2)=1}}\left(1-\frac{1}{p} \right)\\&\times\prod_{\substack{p|\frac{d_2^k}{\gcd(d_2^k,\delta_2)}\\(p,r_1)=1}}\left(1-\frac{1}{p} \right) \prod_{\substack{p|\frac{d_1^kd_2^k}{\gcd(d_1^k,\delta_1)\gcd(d_2^k,\delta_2)}\\(p,r_1r_2)=1}}\left(1-\frac{1}{p^2} \right)^{-1}\\
 %  &=\sum_{\substack{d_1=1\\\left(\frac{d_1^k}{\gcd(d_1^k,\delta_1)},r_1\right)=1}}^{\infty}\frac{\mu(d_1)\gcd(d_1^k,\delta_1)}{d_1^k}\prod_{\substack{p|\frac{d_1^k}{\gcd(d_1^k,\delta_1)}\\(p,r_2)=1}}\left(1-\frac{1}{p} \right)\prod_{\substack{p|\frac{d_1^k}{\gcd(d_1^k,\delta_1)}\\(p,r_1r_2)=1}}\left(1-\frac{1}{p^2} \right)^{-1}\\&\times\sum_{\substack{d_2=1\\\left(\frac{d_2^k}{\gcd(d_2^k,\delta_1)},r_1\right)=1\\\left(\frac{d_1^k}{\gcd(d_1^k,\delta_1)},\frac{d_2^k}{\gcd(d_2^k,\delta_2)}\right)=1}}^{\infty}\frac{\mu(d_2)\gcd(d_2^k,\delta_2)}{d_2^k}\prod_{\substack{p|\frac{d_2^k}{\gcd(d_2^k,\delta_1)}\\(p,r_1)=1}}\left(1-\frac{1}{p} \right)\prod_{\substack{p|\frac{d_2^k}{\gcd(d_2^k,\delta_1)}\\\left(p,\frac{r_1r_2d_1^k}{\gcd(d_1^k,\delta_1)}\right)=1}}\left(1-\frac{1}{p^2} \right)^{-1}\\&+\BigO{\frac{1}{R^{1-\frac{1}{k}}}}.
%\end{align*}

\begin{align*}
   M_{r_1, r_2}:
   %&= \sum_{\substack{d_1, d_2=1\\ \left(\frac{d_1^k}{\gcd(d_1^k,\delta_1)},r_1\right)=\left(\frac{d_2^k}{\gcd(d_2^k,\delta_2)},r_2\right)=1\\\left(\frac{d_1^k}{\gcd(d_1^k,\delta_1)},\frac{d_2^k}{\gcd(d_2^k,\delta_2)}\right)=1}}^{\infty}\frac{\mu(d_1)\mu(d_2)\gcd(d_1^k,\delta_1)\gcd(d_2^k,\delta_2)}{d_1^kd_2^k}\prod_{\substack{p|\frac{d_1^k}{\gcd(d_1^k,\delta_1)}\\(p,r_2)=1}}\left(1-\frac{1}{p} \right)\\&\times\prod_{\substack{p|\frac{d_2^k}{\gcd(d_2^k,\delta_2)}\\(p,r_1)=1}}\left(1-\frac{1}{p} \right) \prod_{\substack{p|\frac{d_1^kd_2^k}{\gcd(d_1^k,\delta_1)\gcd(d_2^k,\delta_2)}\\(p,r_1r_2)=1}}\left(1-\frac{1}{p^2} \right)^{-1}\\
   &=\sum_{\substack{d_1=1\\(d_1,r_1)=1}}^{\infty}\frac{\mu(d_1)\gcd(d_1^k,\delta_1)}{d_1^k}\prod_{\substack{p|d_1\\(p,r_2)=1}}\left(1-\frac{1}{p} \right)\prod_{\substack{p|d_1\\(p,r_1r_2)=1}}\left(1-\frac{1}{p^2} \right)^{-1}\sum_{\substack{d_2=1\\(d_2,r_2d_1)=1}}^{\infty}\frac{\mu(d_2)\gcd(d_2^k,\delta_2)}{d_2^k}\\&\times\prod_{\substack{p|d_2\\(p,r_1)=1}}\left(1-\frac{1}{p} \right)\prod_{\substack{p|d_2\\\left(p,r_1r_2d_1\right)=1}}\left(1-\frac{1}{p^2} \right)^{-1}+\BigO{\frac{1}{R^{1-\frac{1}{k}}}}\\
   %&=\sum_{\substack{d_1=1\\(d_1,r_1)=1}}^{\infty}\frac{\mu(d_1)\gcd(d_1^k,\delta_1)}{d_1^k}\prod_{\substack{p|d_1\\(p,r_2)=1}}\left(1-\frac{1}{p} \right)\prod_{\substack{p|d_1\\(p,r_1r_2)=1}}\left(1-\frac{1}{p^2} \right)^{-1}\prod_{\substack{p\\(p,r_1r_2d_1)=1}}\left(1-\frac{\gcd(p^k,\delta_2)}{p^{k-1}(p+1)} \right)\\ &\times\prod_{\substack{p|r_1\\ (p,r_2d_1)=1}}\left(1-\frac{\gcd(p^k,\delta_2)}{p^k}\right)+\BigO{\frac{1}{R^{1-\frac{1}{k}}}}\\
   %&=\prod_{\substack{p\\(p,r_1r_2)=1}}\left(1-\frac{\gcd(p^k,\delta_2)}{p^{k-1}(p+1)} \right)\prod_{\substack{p|r_1\\(p,r_2)=1}}\left(1-\frac{\gcd(p^k,\delta_2)}{p^k} \right)\sum_{\substack{d_1=1\\(d_1,r_1)=1}}^{\infty}\frac{\mu(d_1)\gcd(d_1^k,\delta_1)}{d_1^k}\prod_{\substack{p|d_1\\(p,r_2)=1}}\left(1-\frac{1}{p} \right) \\ &\times\prod_{\substack{p|d_1\\(p,r_1r_2)=1}}\left(1-\frac{1}{p^2} \right)^{-1}\prod_{\substack{p|d_1\\(p,r_2)=1}}\left(1-\frac{\gcd(p^k,\delta_2)}{p^{k-1}(p+1)} \right)^{-1}\prod_{\substack{p|r_1,p|d_1\\(p,r_2)=1}}\left(1-\frac{\gcd(p^k,\delta_2)}{p^k} \right)^{-1}+\BigO{\frac{1}{R^{1-\frac{1}{k}}}} \\
   &=\prod_{\substack{p\\(p,r_1r_2)=1}}\left(1-\frac{\gcd(p^k,\delta_2)}{p^{k-1}(p+1)} \right)\left(1-\frac{\gcd(p^k,\delta_1)}{p^{k-1}(p+1)}\left(1-\frac{\gcd(p^k,\delta_2)}{p^{k-1}(p+1)} \right)^{-1} \right)\\&\times\prod_{\substack{p|r_2\\(p,r_1)=1}}\left(1-\frac{\gcd(p^k,\delta_1)}{p^{k}} \right)\prod_{\substack{p|r_1\\(p,r_2)=1}}\left(1-\frac{\gcd(p^k,\delta_2)}{p^k} \right)+\BigO{\frac{1}{R^{1-\frac{1}{k}}}}.
\end{align*}
Inserting the above estimate into \eqref{k55} completes the proof of Lemma \ref{key lemma}.
\end{proof}

\begin{lem}\label{key lemma1}
     Let $\Omega\subset [1, R]^2$ be a bounded region and let $f$ be a continuously differentiable function on $\Omega.$ Then, we have
     \[\sum_{\substack{(a,b)\in \Omega\cap\mathbb{Z}^2\\(am,b)=1}}\mu_k(b)^2f(a,b)=\frac{6\phi(m)P_k(m)}{\pi^2}\iint_{\Omega}f(x,y)dxdy+E,\numberthis\label{2.}\] 
     where
      \begin{align*} 
  P_k(m)=\frac{1}{m}\prod_{p|m}\left(1-\frac{1}{p^2} \right)^{-1} \prod_{\substack{p\\(p,m)=1}}\left(1-\frac{1}{p^{k-1}(p+1)}\right),
     \end{align*}
     and
       \[ E\ll_{k,m} \left(\left\|{\frac{\partial f}{\partial x}}\right\|_{\infty}+\left\|{\frac{\partial f}{\partial y}}\right\|_{\infty}\right)\text{Area}(\Omega){R^{\frac{1}{k}}}\log^2 R+\|f\|_{\infty}R^{1+\frac{1}{k}}\log^2 R.\]
    \end{lem}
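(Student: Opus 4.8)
The plan is to follow the strategy of Lemma \ref{key lemma}, which becomes noticeably simpler here because only the $b$-variable carries the $k$-free weight $\mu_k(b)^2$ while $a$ is unrestricted, and the only coprimality constraints are $(a,b)=1$ and $(m,b)=1$, the two factors of $(am,b)=1$. First I would strip off the $k$-free condition via the identity $\mu_k(b)^2=\sum_{d^k\mid b}\mu(d)$ and substitute $b=d^k c$, so that
\[
\sum_{\substack{(a,b)\in\Omega\cap\mathbb{Z}^2\\(am,b)=1}}\mu_k(b)^2 f(a,b)
=\sum_{\substack{d^k\le R\\(d,m)=1}}\mu(d)\sum_{\substack{(a,c)\in\Omega^{(d)}\cap\mathbb{Z}^2\\(a,dc)=1,\ (c,m)=1}} f(a,d^k c),
\]
where $\Omega^{(d)}=\{(x,y):(x,d^k y)\in\Omega\}$ is the region rescaled by $1/d^k$ in the second coordinate. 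The restriction $(d,m)=1$ appears because $d^k\mid b$ together with $(m,b)=1$ forces $(m,d)=1$, and the inner conditions $(a,d)=1$, $(a,c)=1$, $(c,m)=1$ are exactly what remains of $(am,b)=1$ after the substitution.

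Next I would peel off the surviving coprimality conditions by nested M\"obius sums: detect $(a,c)=1$ through $\sum_{g\mid(a,c)}\mu(g)$, then $(a,d)=1$ through a divisor sum in $d$, and $(c,m)=1$ through a divisor sum over $t\mid m$. After writing $a$ and $c$ as multiples of the detection variables and rescaling the region accordingly, the innermost sum becomes a \emph{free} lattice-point sum of a continuously differentiable weight over a rescaled rectangle, to which Lemma \ref{Zaharescu lemma} applies directly. Its main term contributes $\iint_\Omega f(x,y)\,dx\,dy$ times a product of reciprocal scaling factors, while its two $O$-terms feed into the error $E$.

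The main term is then $\iint_\Omega f$ multiplied by a fourfold arithmetic sum over $d$ and the three detection variables, and collecting this sum is the crux of the argument. I would evaluate it as an absolutely convergent Euler product, treating each prime $p$ separately and distinguishing $p\mid m$ from $(p,m)=1$. The $(a,c)=1$ detection supplies the clean factor $\sum_g\mu(g)/g^2=1/\zeta(2)=6/\pi^2$; the constraint $(c,m)=1$ contributes $\prod_{p\mid m}$-type factors, and the $k$-free variable $d$ (squarefree, coprime to $m$) entangles with the local coprimality detections at the primes dividing it. The expected obstacle is precisely this bookkeeping: one must verify prime-by-prime that the local density equals $(1-1/p)$ for $p\mid m$ and $(1-1/p^2)\bigl(1-\tfrac{1}{p^{k-1}(p+1)}\bigr)$ for $(p,m)=1$, so that the total product collapses to $\tfrac{6}{\pi^2}\phi(m)P_k(m)$. (A quick consistency check: dividing $\prod_p(1-1/p^2)=1/\zeta(2)$ out of these local factors leaves $\prod_{p\mid m}\tfrac{p}{p+1}\prod_{(p,m)=1}\bigl(1-\tfrac{1}{p^{k-1}(p+1)}\bigr)=\phi(m)P_k(m)$, as required.) The interaction between the $d$-sum and the coprimality detections is what makes this the delicate step, exactly as in the assembly of $M_{r_1,r_2}$ in Lemma \ref{key lemma}.

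Finally I would assemble $E$. The $k$-free variable ranges over $d\le R^{1/k}$, and each M\"obius detection contributes at most a factor $\log R$ together with a divisor-type factor that is $\BigOm{1}$ since the modulus $m$ is fixed. Summing the error terms of Lemma \ref{Zaharescu lemma} over $d\le R^{1/k}$ and over the detection variables, the $\|f\|_\infty$-term accumulates to $R^{1+1/k}\log^2R\,\|f\|_\infty$ (the box side in the $a$-direction stays of size $\asymp R$ across all $\lesssim R^{1/k}$ values of $d$), while the derivative term accumulates to $\bigl(\|\partial f/\partial x\|_\infty+\|\partial f/\partial y\|_\infty\bigr)\text{Area}(\Omega)R^{1/k}\log^2R$, the original area reappearing once the rescalings of the region and of the derivatives cancel. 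Both carry an implied constant depending only on $k$ and $m$, which gives the stated bound for $E$ and completes the proof.
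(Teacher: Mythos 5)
Your proposal is correct and follows exactly the route the paper intends: the paper's own proof of this lemma is the single line ``similar to Lemma \ref{key lemma},'' and your write-up is a faithful instantiation of that argument with one $k$-free decomposition variable ($d^k\mid b$, $d\leq R^{1/k}$) instead of two, nested M\"obius detections for $(a,c)=1$, $s\mid d$, $t\mid m$, and Lemma \ref{Zaharescu lemma} applied to the rescaled rectangles. Your local-density bookkeeping checks out --- $(1-1/p)$ for $p\mid m$ and $(1-1/p^2)\bigl(1-\tfrac{1}{p^{k-1}(p+1)}\bigr)$ for $(p,m)=1$, collapsing to $\tfrac{6}{\pi^2}\phi(m)P_k(m)$ --- as does the error accumulation, where the $\|f\|_{\infty}R^{1+1/k}\log^2R$ term comes from summing the boundary/sup-norm contributions over $d\leq R^{1/k}$ and the gcd-detection variable $g\leq R$, matching the paper's treatment of $E(r_1,r_2)$ with $r_1=1$, $r_2=m$ fixed.
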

\begin{proof}
    The proof is similar to Lemma \ref{key lemma}.
\end{proof}
\subsection{Weighted character sums} We will need the following estimate on weighted character sums to deal with the contribution coming from non-principal Dirichlet characters modulo $m$ in the computations for Theorems \ref{v correlation} and \ref{main result}.
\begin{prop}\label{prop15}
  Let $R>1,\ M$ and $\Lambda$ be positive real numbers and let $\delta$ be a positive integer. Suppose $\chi$ is a non-principal Dirichlet character modulo $m$ and $f$ is a continuously differentiable function with Supp$(f)\subset(0,\Lambda)$. Then for any integer $r\geq 1$, we have
  \[\sum_{\substack{a\leq R\\(a,r)=1}}\mu_k(a\delta)^2\chi(a)f\left(\frac{M}{a}\right)=\BigOmlamda{\tau(r)R^{\frac{1}{k}} \log R}. \]
\end{prop}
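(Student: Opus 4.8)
The plan is to detect both the $k$-free weight $\mu_k(a\delta)^2$ and the coprimality condition $(a,r)=1$ by M\"{o}bius inversion, thereby reducing the left-hand side to a short average of \emph{smooth} one-variable character sums, each of which must cancel because $\chi$ is non-principal.

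First I would dispose of the degenerate case. If $\delta$ is not $k$-free, then $\delta\mid a\delta$ forces $a\delta$ to be non-$k$-free for every $a$, so $\mu_k(a\delta)^2\equiv0$ and the sum vanishes; hence assume $\delta$ is $k$-free. Writing $\mu_k(a\delta)^2=\sum_{d^k\mid a\delta}\mu(d)$ and using the elementary equivalence $d^k\mid a\delta\iff D\mid a$ with $D:=d^k/\gcd(d^k,\delta)$ (the same fact $a\mid bc\iff \tfrac{a}{\gcd(a,c)}\mid b$ used in Lemma~\ref{key lemma}), I substitute $a=Da_1$ and use complete multiplicativity $\chi(a)=\chi(D)\chi(a_1)$ to get
\[
\sum_{\substack{a\le R\\(a,r)=1}}\mu_k(a\delta)^2\chi(a)f\!\left(\tfrac{M}{a}\right)
=\sum_{\substack{d\ge1\\(D,r)=1}}\mu(d)\,\chi(D)\sum_{\substack{a_1\le R/D\\(a_1,r)=1}}\chi(a_1)\,f\!\left(\tfrac{M}{Da_1}\right),
\]
where a term survives only when $d^k\mid a\delta$ with $a\le R$, i.e. $d^k\le R\delta$, so $d$ ranges over $\ll R^{1/k}$ values (the harmless dependence on the fixed $\delta$ being absorbed).

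Next I would strip the remaining coprimality condition by $[(a_1,r)=1]=\sum_{e\mid(a_1,r)}\mu(e)$ and set $a_1=ea_2$; splitting $\chi(Dea_2)=\chi(D)\chi(e)\chi(a_2)$ leaves the triple sum
\[
\sum_{\substack{d\ge1\\(D,r)=1}}\mu(d)\,\chi(D)\sum_{e\mid r}\mu(e)\,\chi(e)\sum_{a_2\le R/(De)}\chi(a_2)\,f\!\left(\tfrac{M}{Dea_2}\right).
\]
The gain is that the innermost sum is a one-dimensional character sum weighted by the $C^1$ function $t\mapsto f(M/(Det))$. Since $\operatorname{Supp}(f)$ is closed and contained in the open interval $(0,\Lambda)$, it is bounded away from $0$; hence this weight is supported on an interval in $a_2$ of bounded ratio (controlled by $\Lambda$) and of finite total variation $\ll_{\Lambda}\lVert f'\rVert_{1}$. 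Applying Abel summation together with the incomplete-sum bound $\bigl|\sum_{a_2\le x}\chi(a_2)\bigr|\ll_m1$ for a non-principal character (P\'{o}lya--Vinogradov suffices), I obtain that each innermost sum is $\BigOmlamda{1}$, uniformly in $d$ and $e$.

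Finally I would assemble the estimate: the divisor sum over $e\mid r$ contributes at most $\tau(r)$ terms, the sum over admissible $d$ has $\ll R^{1/k}$ terms, and accounting for the counting loss yields the claimed $\BigOmlamda{\tau(r)R^{1/k}\log R}$. The crux is the innermost step, where one must extract \emph{genuine} cancellation from the smooth character sum so that its size is $O_{m,\Lambda}(1)$ independently of the length and position of the $a_2$-range --- this is precisely where the non-principality of $\chi$ is indispensable, since for $\chi=\chi_0$ no such saving is available --- while keeping the nested $k$-free and coprimality expansions, and the multiplicative splitting of $\chi$, mutually consistent throughout.
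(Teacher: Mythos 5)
Your proposal is correct and follows essentially the same route as the paper's own proof: expand $\mu_k(a\delta)^2=\sum_{d^k\mid a\delta}\mu(d)$, convert $d^k\mid a\delta$ into $D\mid a$ with $D=d^k/\gcd(d^k,\delta)$, strip the condition $(a,r)=1$ by a M\"obius sum over divisors of $r$ (giving the $\tau(r)$ factor), and finish with Abel summation against the bounded partial sums of the non-principal character $\chi$, with the sum over $d$ contributing the $R^{1/k}$ count. Your $\BigOmlamda{1}$ bound for the innermost smooth character sum is in fact slightly sharper than the paper's $\ll_{m,\Lambda}\log R$ at that step, and your explicit remarks on the trivial non-$k$-free $\delta$ case and on absorbing the fixed-$\delta$ dependence match the paper's (implicit) conventions.
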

\begin{proof}
    We have
    \begin{align*}
        \sum_{\substack{a\leq R\\(a,r)=1}}\mu_k(a\delta)^2\chi(a)f\left(\frac{M}{a}\right)&=\sum_{\substack{a\leq R\\(a,r)=1}}\chi(a)f\left(\frac{M}{a}\right)\sum_{d^k|a\delta}\mu(d)\\
        &=\sum_{\substack{d^k\leq R\delta\\\left(\frac{d^k}{\gcd(d^k,\delta)},r\right)=1}}\mu(d)\chi\left(\frac{d^k}{\gcd(d^k,\delta)}\right)\sum_{\substack{a\leq \frac{R\gcd(d^k,\delta)}{d^k}\\(a,r)=1}}\chi(a)f\left(\frac{M\gcd(d^k,\delta)}{d^ka}\right).\\
        %&=\sum_{\substack{d^k\leq R\\(d,r)=1}}\mu(d)\chi^k(d)\sum_{\substack{a\leq R/d^k}}\chi(a)f\left(\frac{M}{d^ka}\right)\sum_{s|(a,r)}\mu(s)\\
        %&=\sum_{s|r}\mu(s)\chi(s)\sum_{\substack{d^k\leq R\\(d,r)=1}}\mu(d)\chi^k(d)\sum_{\substack{a\leq R/d^ks}}\chi(a)f\left(\frac{M}{d^ksa}\right).
    \end{align*}
In the last step, we used the fact that $a|bc$ if and only if $\frac{a}{\gcd(a,c)}|b$. Since $\delta$ is $k$-free $-$ otherwise the result would follow trivially $-$ it follows that $\left(\frac{d^k}{\gcd(d^k,\delta)},r\right)=1$ if and only if $(d,r)=1$. Therefore
\begin{align*}
 \sum_{\substack{a\leq R\\(a,r)=1}}\mu_k(a\delta)^2\chi(a)f\left(\frac{M}{a}\right)&= \sum_{\substack{d^k\leq R\delta\\\left(d,r\right)=1}}\mu(d)\chi\left(\frac{d^k}{\gcd(d^k,\delta)}\right)\sum_{\substack{a\leq \frac{R\gcd(d^k,\delta)}{d^k}\\(a,r)=1}}\chi(a)f\left(\frac{M\gcd(d^k,\delta)}{d^ka}\right)\\
 &=\sum_{s|r}\mu(s)\chi(s)\sum_{\substack{d^k\leq R\delta\\\left(d,r\right)=1}}\mu(d)\chi\left(\frac{d^k}{\gcd(d^k,\delta)}\right)\sum_{\substack{a\leq \frac{R\gcd(d^k,\delta)}{sd^k}}}\chi(a)f\left(\frac{M\gcd(d^k,\delta)}{sd^ka}\right).\numberthis\label{1}
\end{align*}
To estimate the inner-most sum, we apply Abel summation formula
    \begin{align*}
     \sum_{\substack{a\leq \frac{R\gcd(d^k,\delta)}{sd^k}}}\chi(a)f\left(\frac{M\gcd(d^k,\delta)}{d^ksa}\right)=& f\left(\frac{M}{R}\right) \sum_{\substack{a\leq \frac{R\gcd(d^k,\delta)}{sd^k}}}\chi(a)\\&+\int_{1}^{\frac{R\gcd(d^k,\delta)}{sd^k}}\sum_{a\leq x}\chi(a)f^{\prime}\left(\frac{M\gcd(d^k,\delta)}{d^ksx}\right)\frac{M\gcd(d^k,\delta)dx}{d^ksx^2}\ll_{m,\Lambda}\log R. 
    \end{align*}
  The above estimate in conjunction with \eqref{1} gives the required result.   
\end{proof}
    \section{Weyl sum}
In this section, we study the equidistribution of the sequence $\left(\mathfrak{F}_{Q,k}^{(m)}\right)_Q$ by establishing an estimate for its associated Weyl sum.
%\subsection{Proof of Theorem \ref{Weyl sum}}
\begin{proof}[Proof of Theorem \ref{Weyl sum}]
     We have
    \begin{align*}
        \sum_{\gamma\in \mathfrak{F}_{Q,k}^{(m)}}e(r\gamma)&=\sum_{\substack{q\leq Q\\q\equiv b\pmod{m}}}\mu_k(q)^2\sum_{\substack{1\leq a\leq q\\(a,q)=1}}e\left(\frac{ar}{q} \right)
    =\sum_{\substack{q\leq Q\\q\equiv b\pmod{m}}}\mu_k(q)^2\sum_{1\leq a\leq q}e\left(\frac{ar}{q} \right)\sum_{d|\gcd(a,q)}\mu(d)\\
    %&=\sum_{d\leq Q}\mu(d) \sum_{\substack{q\leq Q\\q\equiv b\pmod{m}\\d|q}}\mu_k(q)^2\sum_{\substack{1\leq a\leq q\\ d|a}}e\left(\frac{ar}{q} \right)
    &=\sum_{d\leq Q}\mu(d) \sum_{\substack{q\leq \frac{Q}{d}\\qd\equiv b\pmod{m}}}\mu_k(qd)^2\sum_{1\leq a\leq q}e\left(\frac{ar}{q} \right)\\
    &=\sum_{d\leq Q}\mu(d)\sum_{\substack{q\leq \frac{Q}{d}\\qd\equiv b\pmod{m} \\q|r}}q\mu_k(qd)^2
    =\sum_{\substack{q\leq Q \\q|r}}q\sum_{\substack{d\leq\frac{Q}{q}\\qd\equiv b\pmod{m}}}\mu(d)\mu_k(qd)^2.
    %&=\sum_{\substack{q\leq Q\\ q|r}}q\mu_k(q)^2\sum_{\substack{d\leq\frac{Q}{q}\\(d,q)=1}}\mu(d)+\sum_{\substack{q\leq Q\\ q|r}}q\sum_{\substack{d\leq\frac{Q}{q}\\(d,q)\ne 1}}\mu(d)\mu_k(qd)^2
    \end{align*}
    We use Proposition \ref{mu} to estimate the inner sum above, and we find that
   \begin{align*}
    \sum_{\gamma\in \mathfrak{F}_{Q,k}^{(m)}}e(r\gamma)&\ll_{m} Q\exp{\left(-c\frac{(\log Q)^{3/5}}{(\log\log Q)^{1/5}} \right)}\sum_{q|r}\mu_k(q)^2 \prod_{p|q}\left(1-\frac{1}{\sqrt{p}} \right)^{-1} 
    \ll_{m,r} Q\exp{\left(-c\frac{(\log Q)^{3/5}}{(\log\log Q)^{1/5}} \right)}.
   \end{align*} 
   This completes the proof of Theorem \ref{Weyl sum}.
\end{proof}

\section{GRH and Farey fractions}
In this section, we develop an equivalent criterion for the Generalized Riemann Hypothesis in terms of the distribution of $\mathfrak{F}_{Q,k}^{(m)}$.
\subsection{Proof of Theorem \ref{thm2}}
We first assume that
\[\sum_{j=1}^{\mathcal{N}(Q,k,m)}R_{\mathcal{N}(Q,k,m)}(\gamma_j)=\BigOm{Q^{\frac{1}{2}+\epsilon}}. \]
We apply Lemma \ref{f(gamma)} with $f(x)=e(x)$ and obtain
\begin{align*}
   \sum_{v=1}^{\mathcal{N}(Q,k,m)}e(\gamma_v)&=\sum_{q\leq Q}M_q\left(\frac{Q}{q}\right)\sum_{a\leq q}e\left(\frac{a}{q}\right)=M_1(Q), 
\end{align*}
where $M_1(Q)=M(Q)=\sum_{\substack{n\leq Q\\n\equiv b\pmod{m}}}\mu(n)$.
In the last step, we used the following identity
\[\sum_{a\leq q}e\left(\frac{a}{q}\right)=\left\{\begin{array}{cc}
   1,  & \mbox{if} \ q=1,\\
  0,   & \mbox{otherwise}. 
\end{array}\right. \]
We have
\begin{align*}
    M(Q)&=\sum_{v=1}^{\mathcal{N}(Q,k,m)}e(\gamma_v)
    =\sum_{v=1}^{\mathcal{N}(Q,k,m)}e\left(\gamma_v-\frac{v}{\mathcal{N}(Q,k,m)}+\frac{v}{\mathcal{N}(Q,k,m)} \right)\\
    %&=\sum_{v=1}^{\mathcal{N}(Q,k,m)}e(R_{\mathcal{N}(Q,k,m)}(\gamma_v)) e\left(\frac{v}{\mathcal{N}(Q,k,m)} \right)\\
    &=\sum_{v=1}^{\mathcal{N}(Q,k,m)}e\left(\frac{v}{\mathcal{N}(Q,k,m)} \right)(e(R_{\mathcal{N}(Q,k,m)}(\gamma_v))-1)+\sum_{v=1}^{\mathcal{N}(Q,k,m)}e\left(\frac{v}{\mathcal{N}(Q,k,m)} \right).
\end{align*}
This yields
\begin{align*}
    |M(Q)|&\leq \sum_{v=1}^{\mathcal{N}(Q,k,m)}|e(R_{\mathcal{N}(Q,k,m)}(\gamma_v))-1|
    %\leq 2\sum_{v=1}^{\mathcal{N}(Q,k,m)}|\sin{\pi R_{\mathcal{N}(Q,k,m)}(\gamma_v)}|\\
    \leq 2\pi \sum_{v=1}^{\mathcal{N}(Q,k,m)}R_{\mathcal{N}(Q,k,m)}(\gamma_v)\ll_{m} Q^{\frac{1}{2}+\epsilon}.
\end{align*}
Thus, GRH holds. For the converse, assume that GRH is true.  We apply Lemma \ref{f(gamma)} with $f(x)=x-\lfloor x\rfloor-\frac{1}{2}$. We have
\begin{align*}
    G(u)=\sum_{v=1}^{\mathcal{N}(Q,k,m)}f(u+\gamma_v)&=\sum_{q\leq Q}M_q\left(\frac{Q}{q}\right)\sum_{a\leq q}f\left(u+\frac{a}{q}\right)
    =\sum_{q\leq Q}M_q\left(\frac{Q}{q}\right)f(qu).
    %&=\sum_{q\leq Q}\mu_k(q)^2M_q^{(1)}\left(\frac{Q}{q}\right)\sum_{a\leq q}f\left(u+\frac{a}{q}\right)+\sum_{q\leq Q}M_q^{(2)}\left(\frac{Q}{q}\right)\sum_{a\leq q}f\left(u+\frac{a}{q}\right)\\
    %&=\sum_{q\leq Q}\mu_k(q)^2f(qu)M_q^{(1)}\left(\frac{Q}{q}\right)+\sum_{q\leq Q}f(qu)M_q^{(2)}\left(\frac{Q}{q}\right).
\end{align*}
%In the last step, we used Proposition \ref{prop6}. 
We denote
\[I:=\int_0^1(G(u))^2du. \numberthis\label{k16}\]

Case-I: If
\[G(u)=\sum_{q\leq Q}M_q\left(\frac{Q}{q}\right)f(qu).\]
Substituting in \eqref{k16}, we have
\begin{align*}
I&=\int_0^1 \sum_{q_1\leq Q}f(q_1u)M_{q_1}\left(\frac{Q}{q_1}\right)\sum_{q_2\leq Q}f(q_2u)M_{q_2}\left(\frac{Q}{q_2}\right)du\\
    %&+\int_0^1\sum_{q_1\leq Q}f(q_1u)M_{q_1}^{(2)}\left(\frac{Q}{q_1}\right)\sum_{q_2\leq Q}f(q_2u)M_{q_2}^{(2)}\left(\frac{Q}{q_2}\right)du\\
    %&+2\int_0^1 \sum_{q_1\leq Q}\mu_k(q_1)^2f(q_1u)M_{q_1}^{(1)}\left(\frac{Q}{q_1}\right)\sum_{q_2\leq Q}f(q_2u)M_{q_2}^{(2)}\left(\frac{Q}{q_2}\right)du\\
    &=\sum_{q_1, q_2\leq Q}M_{q_1}\left(\frac{Q}{q_1}\right)M_{q_2}\left(\frac{Q}{q_2}\right)\int_0^1f(q_1u)f(q_2u)du.
    %&+\sum_{q_1, q_2\leq Q}M_{q_1}^{(2)}\left(\frac{Q}{q_1}\right)M_{q_2}^{(2)}\left(\frac{Q}{q_2}\right)\int_0^1f(q_1u)f(q_2u)du\\
    %&+\sum_{q_1, q_2\leq Q}\mu_k(q_1)^2M_{q_1}^{(1)}\left(\frac{Q}{q_1}\right)M_{q_2}^{(2)}\left(\frac{Q}{q_2}\right)\int_0^1f(q_1u)f(q_2u)du
    \numberthis\label{3}
\end{align*}
 The above integral is estimated as in \cite[p. 266-267]{Edwards} which yields
\[\int_0^1f(q_1u)f(q_2u)du=\frac{(\gcd(q_1,q_2))^2}{12q_1q_2}. \]
Hence, the above estimate with \eqref{3} gives
\begin{align*}
  I&=\frac{1}{12}\sum_{q_1, q_2\leq Q}M_{q_1}\left(\frac{Q}{q_1}\right)M_{q_2}\left(\frac{Q}{q_2}\right)\frac{(\gcd(q_1,q_2))^2}{q_1q_2} . \numberthis\label{4}  
\end{align*}
If GRH holds, then by employing Proposition \ref{mu}, we obtain
\begin{align*}
    M_q\left(\frac{Q}{q}\right)&=\sum_{\substack{d\leq\frac{Q}{q}\\qd\equiv b\pmod{m}}}\mu(d)\mu_k(qd)^2\ll_{m} \left(\frac{Q}{q}\right)^{\frac{1}{2}+\epsilon}\prod_{p|q}\left(1+\frac{1}{\sqrt{p}-1} \right).
\end{align*}
The above estimate with \eqref{4} yields
\begin{align*}
    I\leq CQ^{1+2\epsilon}\sum_{q_1, q_2\leq Q}\frac{(\gcd(q_1,q_2))^2}{(q_1q_2)^{\frac{3}{2}+\epsilon}}\leq CQ^{1+2\epsilon}\sum_{\delta\leq Q}\frac{1}{\delta^{1+\epsilon}}\sum_{\substack{q_1, q_2\leq \frac{Q}{\delta}\\ (q_1,q_2)=1}}\frac{1}{(q_1q_2)^{\frac{3}{2}+\epsilon}}\leq CQ^{1+2\epsilon},\numberthis\label{5}
\end{align*}
where $C>0$ is constant depending on $m$.

%\begin{align*}
    %I&\leq Q^{1+2\epsilon}\sum_{q_1\leq Q}\sum_{q_2\leq Q}\frac{c^2\mu_2(q_1)^2\mu_2(q_2)^2}{12(q_1q_2)^{\frac{3}{2}+\epsilon}}\prod_{p|q_1}\left(1-\frac{1}{\sqrt{p}} \right)^{-1}\prod_{p|q_2}\left(1-\frac{1}{\sqrt{p}} \right)^{-1}+Q^{1+2\epsilon}\sum_{q_1\leq Q}\sum_{q_2\leq Q}\frac{c^2}{12(q_1q_2)^{\frac{3}{2}+\epsilon}}\\&\times \prod_{p|q_1}\left(1+\frac{1}{\sqrt{p}} \right)\prod_{p|q_2}\left(1+\frac{1}{\sqrt{p}} \right)+Q^{1+2\epsilon}\sum_{q_1\leq Q}\sum_{q_2\leq Q}\frac{c^2C_{q_1}C_{q_2}\mu_2(q_1)^2}{12(q_1q_2)^{\frac{3}{2}+\epsilon}}\prod_{p|q_1}\left(1-\frac{1}{\sqrt{p}} \right)^{-1}\prod_{p|q_2}\left(1+\frac{1}{\sqrt{p}} \right)\\
    %&\leq \frac{c^2Q^{1+2\epsilon}}{4} \sum_{q_1^{\prime}\leq Q}\sum_{q_2^{\prime}\leq Q}\sum_{c\leq Q}\frac{1}{(q_1^{\prime}q_2^{\prime})^{\frac{3}{2}+\epsilon}c^{1+2\epsilon}}\\
    %&\leq CQ^{1+2\epsilon}.\numberthis\label{5}
%\end{align*}

Case-II: Next, we apply Lemma \ref{prop7}, which implies that between $\gamma_v$ and $\gamma_{v+1}$, the value of $G$ is given by the closed form formula $G(u)=-1/2+\mathcal{N}(Q,k,m)u-v$. Therefore
\begin{align*}
    I&=\sum_{v=1}^{\mathcal{N}(Q,k,m)}\int_{\gamma_{v-1}}^{\gamma_v}\left(\frac{1}{2}+u\mathcal{N}(Q,k,m)-v\right)^2du\\
    &=\frac{1}{3\mathcal{N}(Q,k,m)}\sum_{v=1}^{\mathcal{N}(Q,k,m)}\left(\left(\gamma_v\mathcal{N}(Q,k,m)-v+\frac{1}{2}\right)^3-\left(\gamma_{v-1}\mathcal{N}(Q,k,m)-v+\frac{1}{2}\right)^3\right)\\
    %&=\frac{1}{3\mathcal{N}(Q,k,m)}\sum_{v=1}^{\mathcal{N}(Q,k,m)}\left(\left(R_{\mathcal{N}(Q,k,m)}(\gamma_v)N(Q,k,m)+\frac{1}{2}\right)^3-\left(R_{\mathcal{N}(Q,k,m)}(\gamma_{v-1})\mathcal{N}(Q,k,m)-\frac{1}{2}\right)^3\right)\\
    &=\frac{1}{3\mathcal{N}(Q,k,m)}\sum_{v=1}^{\mathcal{N}(Q,k,m)}\left(\left(R_{\mathcal{N}(Q,k,m)}(\gamma_v)\mathcal{N}(Q,k,m)+\frac{1}{2}\right)^3-\left(R_{\mathcal{N}(Q,k,m)}(\gamma_v)\mathcal{N}(Q,k,m)-\frac{1}{2}\right)^3\right)\\
    &=\mathcal{N}(Q,k,m)\sum_{v=1}^{\mathcal{N}(Q,k,m)}(R_{\mathcal{N}(Q,k,m)}(\gamma_v))^2+\frac{1}{12}.\numberthis\label{6}
\end{align*}
The above estimate with \eqref{5} gives
\[\mathcal{N}(Q,k,m)\sum_{v=1}^{\mathcal{N}(Q,k,m)}(R_{\mathcal{N}(Q,k,m)}(\gamma_v))^2<CQ^{1+2\epsilon}, \]
where $C$ is a constant depending on $\epsilon$. By the Schwarz inequality, we have
\begin{align*}
\sum_{v=1}^{\mathcal{N}(Q,k,m)}R_{\mathcal{N}(Q,k,m)}(\gamma_v)&\leq \left(\sum_{v=1}^{\mathcal{N}(Q,k,m)}1\right)^{1/2}\left(\sum_{v=1}^{\mathcal{N}(Q,k,m)}(R_{\mathcal{N}(Q,k,m)}(\gamma_v))^2\right)^{1/2}\\ & 
\leq \left(\mathcal{N}(Q,k,m)\sum_{v=1}^{\mathcal{N}(Q,k,m)}(R_{\mathcal{N}(Q,k,m)}(\gamma_v))^2\right)^{1/2}\leq C^{1/2}Q^{1/2+\epsilon}.
\end{align*}
This completes the proof of Theorem \ref{thm2}.

%Using Theorem \ref{betaRH}, we have
%\[\mathcal{N}(Q,k)\sum_{v=1}^{\mathcal{N}(Q,k)}\beta_v^2<CQ^{1+2\epsilon}, \]
%where $C$ is a constant depending on $\epsilon$. By the Schwarz inequality, we obtain
%\begin{align*}
%\sum_{v=1}^{\mathcal{N}(Q,k)}|\beta_v|&\leq \left(\sum_{v=1}^{\mathcal{N}(Q,k)}1\right)^{1/2}\left(\sum_{v=1}^{\mathcal{N}(Q,k)}\beta_v^2\right)^{1/2} 
%\leq \left(\mathcal{N}(Q,k)\sum_{v=1}^{\mathcal{N}(Q,k)}\beta_v^2\right)^{1/2}\leq C^{1/2}Q^{1/2+\epsilon}.
%\end{align*}
%This completes the proof of Theorem \ref{thm2}.

%\right. \\
  %&\left. \quad +

\subsection{Proof of Theorem \ref{beta2}}
Employing \eqref{4} and \eqref{6}, we have
\begin{align*}
    \mathcal{N}(Q,k,m)\sum_{v=1}^{\mathcal{N}(Q,k,m)}(R_{\mathcal{N}(Q,k,m)}(\gamma_v))^2+\frac{1}{12}&=\frac{1}{12}\sum_{q_1, q_2\leq Q}M_{q_1}\left(\frac{Q}{q_1}\right)M_{q_2}\left(\frac{Q}{q_2}\right)\frac{(\gcd(q_1,q_2))^2}{q_1q_2}.
    \end{align*}
    Therefore, we have
    \begin{align*}
    \sum_{v=1}^{\mathcal{N}(Q,k,m)}(R_{\mathcal{N}(Q,k,m)}(\gamma_v))^2&=\frac{1}{12\mathcal{N}(Q,k,m)}\left(\sum_{q_1, q_2\leq Q}M_{q_1}\left(\frac{Q}{q_1}\right)M_{q_2}\left(\frac{Q}{q_2}\right)\frac{(\gcd(q_1,q_2))^2}{q_1q_2}-1\right).
\end{align*}
This completes the proof of the first part of Theorem \ref{beta2}.
In order to prove second part, we use the above identity and obtain
\begin{align*}
  \sum_{v=1}^{\mathcal{N}(Q,k,m)}(R_{\mathcal{N}(Q,k,m)}(\gamma_v))^2
  %=&\frac{1}{12\mathcal{N}(Q,k,m)}\left(\sum_{q_1, q_2\leq Q}M_{q_1}\left(\frac{Q}{q_1}\right)M_{q_2}\left(\frac{Q}{q_2}\right)\frac{(q_1,q_2)^2}{q_1q_2}-1 \right)\\
    =&\frac{1}{12\mathcal{N}(Q,k,m)}\left(\sum_{q_1\leq Q}\sum_{\substack{d_1\leq \frac{Q}{q_1}\\q_1d_1\equiv b\pmod{m}}}\mu(d_1)\mu_k(q_1d_1)^2\sum_{q_2\leq Q}\right. \\ &\times \left.\sum_{\substack{d_2\leq \frac{Q}{q_2}\\q_2d_2\equiv b\pmod{m}}}\mu(d_2)\mu_k(q_2d_2)^2 \frac{(\gcd(q_1,q_2))^2}{q_1q_2}-1 \right).\numberthis\label{26}
\end{align*}
Let $\gcd(q_1,q_2)=\delta$ so that $q_1=q_1^{\prime}\delta$ and $q_2=q_2^{\prime}\delta$ with $(q_1^{\prime},q_2^{\prime})=1$. The above identity can be expressed as
\begin{align*}
\sum_{v=1}^{\mathcal{N}(Q,k,m)}(R_{\mathcal{N}(Q,k,m)}(\gamma_v))^2=&\frac{1}{12\mathcal{N}(Q,k,m)}\left(\sum_{\delta\leq Q}\sum_{q_1^{\prime}\leq \frac{Q}{\delta}}\frac{1}{q_1^{\prime}}\sum_{\substack{d_1\leq \frac{Q}{q_1^{\prime}\delta}\\q_1^{\prime}d_1\delta\equiv b\pmod{m}}}\mu(d_1)\mu_k(q_1^{\prime}d_1\delta)^2\right. \\ &\times \left.\sum_{\substack{q_2^{\prime}\leq \frac{Q}{\delta}\\(q_1^{\prime},q_2^{\prime})=1}}\frac{1}{q_2^{\prime}}\sum_{\substack{d_2\leq \frac{Q}{q_2^{\prime}\delta}\\q_2^{\prime}d_2\delta\equiv b\pmod{m}}}\mu(d_2)\mu_k(q_2^{\prime}d_2\delta)^2 -1 \right).\numberthis\label{k22}
\end{align*}
We apply Dirichlet hyperbola method to estimate the inner sum on the above identity
\begin{align*}
    S:=&\sum_{\substack{q\leq \frac{Q}{\delta}\\(q,l)=1}}\frac{1}{q}\sum_{\substack{d\leq \frac{Q}{q\delta}\\qd\delta\equiv b\pmod{m}}}\mu(d)\mu_k(qd\delta)^2\\
    =&\sum_{\substack{q\leq \sqrt{\frac{Q}{\delta}}\\(q,l)=1}}\frac{1}{q}\sum_{\substack{d\leq \frac{Q}{q\delta}\\qd\delta\equiv b\pmod{m}}}\mu(d)\mu_k(qd\delta)^2+\sum_{d\leq\sqrt{\frac{Q}{\delta}}}\mu(d)\sum_{\substack{q\leq\frac{Q}{d\delta}\\qd\delta\equiv b\pmod{m}\\ (q,l)=1}}\frac{\mu_k(qd\delta)^2}{q}\\&-\sum_{\substack{q\leq \sqrt{\frac{Q}{\delta}}\\(q,l)=1}}\frac{1}{q}\sum_{\substack{d\leq\sqrt{\frac{Q}{\delta}}\\qd\delta\equiv b\pmod{m}}}\mu(d)\mu_k(qd\delta)^2.
\end{align*}
Employing Proposition \ref{mu} to the inner sum in the first and last terms, and Proposition \ref{Prop1} to the inner sum in the second term of the above identity, we obtain
\begin{align*}
    S&\ll_{m} \frac{Q}{\delta}\sum_{\substack{q\leq \sqrt{\frac{Q}{\delta}}\\(q,l)=1}}\frac{1}{q^2} \exp{\left(-c\frac{(\log Q/q\delta)^{3/5}}{(\log\log Q/q\delta)^{1/5}} \right)}\prod_{p|q\delta}\left(\frac{\sqrt{p}}{\sqrt{p}-1}\right)\prod_{p|\ell}\left(1-\frac{1}{\sqrt{p}}\right)^{-1}\\ &\ll_{m} \frac{Q}{\delta}\exp{\left(-c\frac{(\log Q/\delta)^{3/5}}{(\log\log Q/\delta)^{1/5}} \right)}\prod_{p|\delta}\left(\frac{\sqrt{p}}{\sqrt{p}-1}\right)\prod_{p|\ell}\left(1-\frac{1}{\sqrt{p}}\right)^{-1}.
\end{align*}
Inserting the above estimate into \eqref{k22} gives
\begin{align*}
   \sum_{v=1}^{\mathcal{N}(Q,k,m)}(R_{\mathcal{N}(Q,k,m)}(\gamma_v))^2&\ll_{m} \frac{Q^2}{\mathcal{N}(Q,k,m)} \sum_{\delta\leq Q}\frac{1}{\delta^{2-\epsilon}}\exp{\left(-c\frac{(\log Q/\delta)^{3/5}}{(\log\log Q/\delta)^{1/5}} \right)}\\
   &\ll_{m} \sum_{\delta\leq\sqrt{Q}}\frac{1}{\delta^{2-\epsilon}}\exp{\left(-c\frac{(\log Q/\delta)^{\frac{3}{5}}}{(\log\log Q/\delta)^{\frac{1}{5}}} \right)}+\sum_{\sqrt{Q}<\delta\leq Q}\frac{1}{\delta^{2-\epsilon}}\exp{\left(-c\frac{(\log Q/\delta)^{\frac{3}{5}}}{(\log\log Q/\delta)^{\frac{1}{5}}} \right)}\\
   %&\ll_{k,m} \exp{\left(-c\frac{(\log Q)^{\frac{3}{5}}}{(\log\log Q)^{\frac{1}{5}}} \right)}+\frac{1}{\sqrt{Q}}
   &\ll_{m} \exp{\left(-c\frac{(\log Q)^{\frac{3}{5}}}{(\log\log Q)^{\frac{1}{5}}} \right)}.
\end{align*}
This completes the proof unconditionally. We now estimate the sum on the right-hand side of \eqref{k22} under the assumption of the GRH. Assuming GRH, we apply Proposition \ref{mu}. Therefore, 
\begin{align*}
   \sum_{v=1}^{\mathcal{N}(Q,k,m)}(R_{\mathcal{N}(Q,k,m)}(\gamma_v))^2&\ll_{m} \frac{Q^{1+\epsilon}}{\mathcal{N}(Q,k,m)} \sum_{\delta\leq Q}\frac{1}{\delta^{1+\epsilon}}\sum_{q_1^{\prime}\leq \frac{Q}{\delta}}\frac{1}{(q_1^{\prime})^{\frac{3}{2}+\epsilon}}\sum_{q_2^{\prime}\leq \frac{Q}{\delta}}\frac{1}{(q_2^{\prime})^{\frac{3}{2}+\epsilon}}\ll_{m} Q^{-1+\epsilon}.
\end{align*}
This completes the proof of Theorem \ref{beta2}.

\section{Discrepancy}

\subsection{Proof of Theorem \ref{thm1}}
Let $\epsilon>0$ be arbitrarily small, and set $\alpha=1/Q-\epsilon$ to obtain a lower bound for $D_{\mathcal{N}(Q,k,m)}\left(\mathscr{F}_{Q,k}^{(m)}\right)$. By the definition of $A(\alpha;\mathcal{N}(Q,k,m))$, we have $A(1/Q-\epsilon;\mathcal{N}(Q,k,m))=0$.
By \eqref{D_1} and \eqref{R_N}, we get
\[D_{\mathcal{N}(Q,k,m)}\left(\mathscr{F}_{Q,k}^{(m)}\right)\geq R_{\mathcal{N}(Q,k,m)}(\alpha)=R_{\mathcal{N}(Q,k,m)}\left(\frac{1}{Q}-\epsilon\right)=\frac{1}{Q}-\epsilon \]
for all $\epsilon>0$. Since $\epsilon>0$ is arbitrary, one can thus deduce that
\[D_{\mathcal{N}(Q,k,m)}\left(\mathscr{F}_{Q,k}^{(m)}\right)\geq \frac{1}{Q}. \]
We next estimate the upper bound for the discrepancy. For any $\alpha\in [0,1]$, we write
\begin{align*}
    \mathcal{A}(\alpha;\mathcal{N}(Q,k,m))-\alpha\mathcal{N}(Q,k,m)
    %&=\sum_{\substack{\gamma\in\mathscr{F}_{Q,k}^{(m)}\\\gamma\leq\alpha}}1-\alpha\mathcal{N}(Q,k,m)\\
    &=\sum_{\substack{q\leq Q\\q\equiv b\pmod{m}}}\mu_k(q)^2\sum_{\substack{a\leq q\alpha\\(a,q)=1}}1-\alpha\sum_{\substack{q\leq Q\\q\equiv b\pmod{m}}}\mu_k(q)^2\sum_{\substack{a\leq q\\(a,q)=1}}1\\
    &=\sum_{\substack{q\leq Q\\q\equiv b\pmod{m}}}\mu_k(q)^2\sum_{a\leq q\alpha}\sum_{\substack{d|a\\d|q}}\mu(d)-\alpha\sum_{\substack{q\leq Q\\q\equiv b\pmod{m}}}\mu_k(q)^2\sum_{a\leq q}\sum_{\substack{d|a\\d|q}}\mu(d)\\
    %&=\sum_{d\leq Q}\mu(d)\sum_{\substack{q\leq Q\\q|d}}\mu_k(q)^2\left(\sum_{\substack{a\leq q\alpha\\d|a}}1-\alpha\sum_{\substack{a\leq q\\d|a}}1 \right)\\
    %&=\sum_{d\leq Q}\mu(d)\sum_{\substack{q\leq\frac{Q}{d}\\qd\equiv b\pmod{m}}}\mu_k(qd)^2\left(\sum_{\substack{a\leq q\alpha}}1-\alpha\sum_{\substack{a\leq q}}1 \right)\\
    &=\sum_{d\leq Q}\mu(d)\sum_{\substack{q\leq\frac{Q}{d}\\qd\equiv b\pmod{m}}}\mu_k(qd)^2(\lfloor q\alpha\rfloor-\alpha\lfloor q\rfloor)\\
    %&=\sum_{d\leq Q}\mu(d)\sum_{\substack{q\leq\frac{Q}{d}\\qd\equiv b\pmod{m}}}\mu_k(qd)^2(-\{q\alpha \}+\alpha\{q \})\\
    &=-\sum_{d\leq Q}\mu(d)\sum_{\substack{q\leq\frac{Q}{d
    }\\qd\equiv b\pmod{m}}}\mu_k(qd)^2\{q\alpha \}.
\end{align*}
Next, we take the modulus of both sides. Therefore,
\begin{align*}
    |\mathcal{A}(\alpha;\mathcal{N}(Q,k,m))-\alpha\mathcal{N}(Q,k,m)|&=\left|\sum_{d\leq Q}\mu(d)\sum_{\substack{q\leq\frac{Q}{d
    }\\qd\equiv b\pmod{m}}}\mu_k(qd)^2\{q\alpha \} \right|\\
    %=\left|\sum_{q\leq Q}\{q\alpha \}\sum_{\substack{d\leq\frac{Q}{q}\\qd\equiv b\pmod{m}}}\mu(d)\mu_k(qd)^2\right|\\
    &\ll\sum_{q\leq Q}\mu_k(q)^2\left|\sum_{\substack{d\leq\frac{Q}{q
    }\\qd\equiv b\pmod{m}}}\mu(d)\mu_k(qd)^2\right|.\numberthis\label{k5}
\end{align*}
By employing Proposition \ref{mu}, the above sum can be expressed as
    \begin{align*}
|\mathcal{A}(\alpha;\mathcal{N}(Q,k,m))-\alpha\mathcal{N}(Q,k,m)|&\ll_{m} \sum_{q\leq Q}\mu_k(q)^2\frac{Q}{q}\prod_{p|q}\left(1+\frac{1}{\sqrt{p}-1} \right)\exp{(-c\sqrt{\log (Q/q)})}\\
        &\ll_{m} \sum_{q\leq Q}\mu_k(q)^2\prod_{p|q}\left(1+\frac{1}{\sqrt{p}-1} \right)\sum_{d\leq \frac{Q}{q}}\exp{(-c\sqrt{\log d})}\\
        &\ll_{m} \sum_{d\leq Q}\exp{(-c\sqrt{\log d})}\sum_{q\leq \frac{Q}{d}}\mu_k(q)^2\prod_{p|q}\left(1+\frac{1}{\sqrt{p}-1} \right).
    \end{align*}
    To estimate the inner-sum, we apply Proposition \ref{mu product} and obtain
    \begin{align*}
        |\mathcal{A}(\alpha;\mathcal{N}(Q,k,m))-\alpha\mathcal{N}(Q,k,m)|&\ll_{m} \sum_{d\leq Q}\exp{(-c\sqrt{\log d})}\frac{Q}{d\zeta(k)}\prod_{p}\left(1+\frac{p^{k-1}-1}{(p^{\frac{1}{2}}-1)(p^k-1)} \right)\\&\ll_{m} \frac{Q}{\zeta(k)}\sum_{d\leq Q}\frac{1}{d\exp{(c\sqrt{\log d})}}\ll_{m} Q.\numberthis\label{k14}
    \end{align*}
%Inserting \eqref{k14} and \eqref{k15} into \eqref{k5} yields
 % \[|\mathcal{A}(\alpha;\mathcal{N})-\alpha\mathcal{N}(Q,k)|\ll Q. \]
Therefore,
\[R_{\mathcal{N}(Q,k,m)}(\alpha)=\frac{1}{\mathcal{N}(Q,k,m)}\left|A(\alpha;\mathcal{N}(Q,k,m))-\alpha \mathcal{N}(Q,k,m)\right|\ll_{m}\frac{1}{Q}, \]
uniformly in $\alpha\in[0,1]$. This completes the proof of Theorem \ref{thm1}.
\section{$\nu$-level correlations }
The following section discusses the $\nu$-level correlation measure of the sequence $\left(\mathfrak{F}_{Q,k}^{(m)}\right)_Q$. We begin by establishing a closed-form formula for the exponential sum over the Farey fractions whose denominators are $k$-free and lie in an arithmetic progression.
\begin{lem}\label{exp nu}
   Let $r\in \mathbb{Z},$ we have
   \[\sum_{\gamma\in \mathfrak{F}_{Q,k}^{(m)}}e(r\gamma)=\sum_{\substack{q\leq Q\\q|r}}qM_q\left(\frac{Q}{q} \right), \]
   where $M_q(x)=\sum_{\substack{d\leq x\\qd\equiv b\pmod{m}}}\mu(d)\mu_k(qd)^2$.
\end{lem}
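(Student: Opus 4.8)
The plan is to unfold the definition of $\mathscr{F}_{Q,k}^{(m)}$, detect the coprimality condition $(a,q)=1$ by Möbius inversion, and then exploit that the resulting inner sum over $a$ is a complete exponential sum. First I would write the Weyl sum as
\[\sum_{\gamma \in \mathscr{F}_{Q,k}^{(m)}} e(r\gamma) = \sum_{\substack{q \leq Q \\ q \equiv b \pmod m}} \mu_k(q)^2 \sum_{\substack{1 \leq a \leq q \\ (a,q)=1}} e\!\left(\frac{ar}{q}\right),\]
where the factor $\mu_k(q)^2$ encodes that $q$ is $k$-free and the congruence restricts $q$ to the residue class $b \pmod m$. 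Inserting $\sum_{d \mid (a,q)} \mu(d) = \mathbf{1}_{(a,q)=1}$ then removes the coprimality constraint from the inner sum over $a$.

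Next I would interchange the order of summation so that $d$ is summed first, and make the substitution $q \mapsto qd$, which absorbs the divisibility $d \mid q$ into the summation variable and converts the $k$-free indicator and the congruence into $\mu_k(qd)^2$ and $qd \equiv b \pmod m$. At this stage the inner sum over $a$ is the complete geometric sum $\sum_{1 \leq a \leq q} e(ar/q)$, which equals $q$ when $q \mid r$ and vanishes otherwise. This evaluation is the key step: it is what collapses the $a$-summation and simultaneously produces both the weight $q$ and the divisibility condition $q \mid r$ appearing in the statement.

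Finally I would reorganize the surviving terms by summing over $q \mid r$ on the outside, and identify the remaining inner sum over $d$ as precisely $M_q(Q/q)$, giving the claimed identity. This computation is exactly the one carried out at the start of the proof of Theorem~\ref{Weyl sum}, now recorded for all $r \in \mathbb{Z}$ rather than only $r \neq 0$; the only point needing a brief remark is the degenerate case $r = 0$, where $q \mid r$ holds for every $q$ and the right-hand side reduces to $\sum_{q \leq Q} q\, M_q(Q/q)$, in agreement with Lemma~\ref{f(gamma)} applied to $f \equiv 1$. I do not anticipate any genuine obstacle here, since every manipulation is an exact identity and the only nontrivial ingredient is the evaluation of the complete exponential sum over $a$.
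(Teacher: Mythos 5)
Your proposal is correct and follows essentially the same route as the paper's proof: the same M\"obius detection of $(a,q)=1$, the same interchange and substitution $q\mapsto qd$ converting the indicator and congruence into $\mu_k(qd)^2$ and $qd\equiv b\pmod{m}$, and the same evaluation of the complete sum $\sum_{1\le a\le q}e(ar/q)$ producing the weight $q$ and the condition $q\mid r$. Your closing remark on the degenerate case $r=0$, checked against Lemma~\ref{f(gamma)} with $f\equiv 1$, is a correct extra observation that the paper leaves implicit.
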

\begin{proof}
    We have
    \begin{align*}
        \sum_{\gamma\in \mathfrak{F}_{Q,k}^{(m)}}e(r\gamma)&=\sum_{\substack{q\leq Q\\q\equiv b\pmod{m}}}\mu_k(q)^2\sum_{\substack{1\leq a\leq q\\(a,q)=1}}e\left(\frac{ar}{q} \right)
    =\sum_{\substack{q\leq Q\\q\equiv b\pmod{m}}}\mu_k(q)^2\sum_{1\leq a\leq q}e\left(\frac{ar}{q} \right)\sum_{d|\gcd(a,q)}\mu(d)\\
    %&=\sum_{d\leq Q}\mu(d) \sum_{\substack{q\leq Q\\q\equiv b\pmod{m}\\d|q}}\mu_k(q)^2\sum_{\substack{1\leq a\leq q\\ d|a}}e\left(\frac{ar}{q} \right)
    &=\sum_{d\leq Q}\mu(d) \sum_{\substack{q\leq \frac{Q}{d}\\qd\equiv b\pmod{m}}}\mu_k(qd)^2\sum_{1\leq a\leq q}e\left(\frac{ar}{q} \right)
    %=\sum_{d\leq Q}\mu(d)\sum_{\substack{q\leq \frac{Q}{d}\\qd\equiv b\pmod{m}\\ q|r}}q\mu_k(qd)^2
    =\sum_{\substack{q\leq Q\\ q|r}}qM_q\left(\frac{Q}{q} \right).
    \end{align*}
\end{proof}

\subsection{Proof of Theorem \ref{v correlation}}
In order to establish the $\nu$-level correlation measure for the sequence of Farey fractions with $k$-free denominators $q$ that run through a given arithmetic progression, we need to estimate, for any positive real number $\Lambda$, the quantity
\[\mathcal{S}_{\mathfrak{F}_{Q,k}^{(m)}}^{\nu}(\Lambda)=\frac{1}{\mathcal{N}{(Q,k,m)}}\#\{(\gamma_1,\ldots,\gamma_{\nu})\in \left(\mathfrak{F}_{Q,k}^{(m)}\right)^{\nu}: \gamma_i\ \text{distinct}, (\gamma_1-\gamma_2,\ldots,\gamma_{\nu-1}-\gamma_{\nu})\in\frac{1}{\mathcal{N}{(Q,k,m)}}\mathfrak{B}+\mathbb{Z}^{\nu-1}\}. \]
To estimate this, we build upon the ideas introduced in \cite{BocaF} making several necessary and technical modifications on the way. 
For a smooth real valued function $H$ on $\mathbb{R}^{\nu-1}$ such that Supp$(H)\subset(0,\Lambda)^{\nu-1}$, define
\[f(y)=\sum_{r\in \mathbb{Z}^{\nu-1}}H(\mathcal{N}{(Q,k,m)}(y+r)),\ y\in\mathbb{R}^{\nu-1}, \]
and
\[S^{(\nu)}_{Q,k}=\sum_{\gamma_i\in\mathfrak{F}_{Q,k}^{(m)}, \text{distinct}}f(\gamma_1-\gamma_2,\ldots,\gamma_{\nu-1}-\gamma_{\nu}). \numberthis\label{h1}\]
Since Supp$H\subset(0,\Lambda)$, the condition $\gamma_i\ne\gamma_j$ for $i\ne j$ can be removed for $Q$ large enough that $\mathcal{N}({Q,k,m})>\Lambda.$ Let
 \[f(y)=\sum_{r\in \mathbb{Z}^{\nu-1}}c_re(r\cdot y)\]
 be the Fourier series expansion of $f$, with the Fourier coefficients 
 \begin{align*}
     c_r&=\int_{[0,1)^{\nu-1}}f(x)e(-r\cdot x)dx
     %=\sum_{m\in\mathbb{Z}^{\nu-1}}\int_{[0,1)^{\nu-1}}H(\mathcal{N}{(Q,k,m)}(x+m))e(-n\cdot x)dx\\
     %&=\int_{\mathbb{R}^{\nu-1}}H(\mathcal{N}{(Q,k,m)}v)e(-n\cdot v)dv
     =\frac{1}{(\mathcal{N}{(Q,k,m))^{\nu-1}}}\widehat{H}\left(\frac{r}{\mathcal{N}{(Q,k,m)}}\right),\numberthis\label{Fourier coef}
 \end{align*}
 where $\widehat{H}$ is the Fourier transform of $H.$ Then by \eqref{h1}, we have
\begin{align*}
     S^{(\nu)}_{Q,k}&=\sum_{\gamma_1,\ldots,\gamma_{\nu}\in \mathfrak{F}_{Q,k}^{(m)}}f(\gamma_1-\gamma_2,\ldots,\gamma_{\nu-1}-\gamma_{\nu})
     =\sum_{\substack{\gamma_1,\ldots,\gamma_{\nu}\in \mathfrak{F}_{Q,k}^{(m)}\\r_1,\ldots,r_{\nu-1}\in\mathbb{Z}}}c_re(r\cdot(\gamma_1-\gamma_2,\ldots,\gamma_{\nu-1}-\gamma_{\nu}))\\
     &=\sum_{\substack{\gamma_1,\ldots,\gamma_{\nu}\in \mathfrak{F}_{Q,k}^{(m)}\\r_1,\ldots,r_{\nu-1}\in\mathbb{Z}}}c_re(r_1\gamma_1)e((r_2-r_1)\gamma_2)\ldots e((r_{\nu-1}-r_{\nu-2})\gamma_{\nu-1})e(r_{\nu-1}\gamma_{\nu}).\numberthis\label{k66}
 \end{align*}
By applying Lemma \ref{exp nu} to the above identity yields
\begin{align*}
    S^{(\nu)}_{Q,k}
    %&=\sum_{r=(r_1,\ldots,r_{\nu-1})\in\mathbb{Z}^{\nu-1}}c_r\sum_{\substack{d_1|r_1\\d_2|r_2-r_1\\\ldots\\d_{\nu-1}|r_{\nu-1}-r_{\nu-2}\\d_{\nu}|r_{\nu-1}}}d_1\cdots d_{\nu}M_{d_1}\left(\frac{Q}{d_1} \right)\cdots M_{d_{\nu}}\left(\frac{Q}{d_{\nu}} \right)\\
    &=\sum_{1\leq d_1,\ldots,d_{\nu}\leq Q}d_1\cdots d_{\nu}M_{d_1}\left(\frac{Q}{d_1} \right)\cdots M_{d_{\nu}}\left(\frac{Q}{d_{\nu}} \right)\sum_{\substack{d_1|r_1\\d_2|r_2-r_1\\\ldots\\d_{\nu-1}|r_{\nu-1}-r_{\nu-2}\\d_{\nu}|r_{\nu-1}}}c_r.
\end{align*}
The divisibility conditions in the inner-sum of the above identity can be expressed as
\begin{align*}
    r_1&=l_1d_1\\
    r_2&=l_1d_1+l_2d_2\\
    &\cdots\\
    r_{\nu-1}&=l_1d_1+\cdots+l_{\nu-1}d_{\nu-1}=l_{\nu}d_{\nu}
\end{align*}
for some $l_1,\ldots,l_{\nu}\in\mathbb{Z}$. We denote $d=(d_1,\ldots,d_{\nu-1})\in\square_Q^{\nu-1}:=[1,Q]^{\nu-1}\cap\mathbb{Z}^{\nu-1}$, $l=(l_1,\ldots,l_{\nu-1})$. We obtain
\begin{align*}
   S^{(\nu)}_{Q,k}=& \sum_{d\in\square_Q^{\nu-1}}d_1\cdots d_{\nu-1}M_{d_1}\left(\frac{Q}{d_1} \right)\cdots M_{d_{\nu-1}}\left(\frac{Q}{d_{\nu-1}} \right)\\&\times\sum_{l\in\mathbb{Z}^{\nu-1}}c_{d_1l_1,d_1l_1+d_2l_2,\ldots,d_1l_1+\cdots+d_{\nu-1}l_{\nu-1}}\sum_{d_{\nu}|d_1l_1+\cdots+d_{\nu-1}l_{\nu-1}}d_{\nu}M_{d_{\nu}}\left(\frac{Q}{d_{\nu}} \right).\numberthis\label{h3}
\end{align*}
By using \eqref{Fourier coef} and Lemma \ref{exp nu}, the two inner sums in \eqref{h3} take the form
\begin{align*}
\sum_{\substack{l\in\mathbb{Z}^{\nu-1}\\\gamma\in\mathfrak{F}_{Q,k}^{(m)}}}&c_{d_1l_1,d_1l_1+d_2l_2,\ldots,d_1l_1+\cdots+d_{\nu-1}l_{\nu-1}}e(-\gamma d\cdot l)\\
%&=\sum_{\substack{r\in\mathbb{Z}^{\nu-1}\\\gamma\in\mathfrak{F}_{Q,k}^{(m)}}}\int_{\mathbb{R}^{\nu-1}}e\left(-\gamma\sum_{i=1}^{\nu-1}d_ir_i-\sum_{j=1}^{\nu-1}x_j(d_1r_1+\cdots+d_jr_j) \right)H(\mathcal{N}{(Q,k,m)}x)dx\\
%&=\sum_{\substack{r\in\mathbb{Z}^{\nu-1}\\\gamma\in\mathfrak{F}_{Q,k}^{(m)}}}\int_{\mathbb{R}^{\nu-1}}e\left(-\sum_{i=1}^{\nu-1}d_ir_i(x_i+\cdots+x_{\nu-1}+\gamma)\right)H(\mathcal{N}{(Q,k,m)}x)dx\\
&=\sum_{\substack{l\in\mathbb{Z}^{\nu-1}\\\gamma\in\mathfrak{F}_{Q,k}^{(m)}}}\int_{\mathbb{R}^{\nu-1}}e\left(-\sum_{i=1}^{\nu-1}d_il_i(x_i+\cdots+x_{\nu-1})\right)H(\mathcal{N}{(Q,k,m)}(x_1,\ldots,x_{\nu-2},x_{\nu-1}-\gamma))dx.\numberthis\label{h2}
\end{align*}
We take $y_i=d_i(x_i+\cdots+x_{\nu-1}),\ i=1,\ldots,\nu-1$ with $y=(y_1,\ldots,y_{\nu-1})\in\mathbb{R}^{\nu-1}$ and set
\[H_{\mathcal{N}{(Q,k,m)};d,\gamma}(y)=H\left(\mathcal{N}{(Q,k,m)}\left(\frac{y_1}{d_1}-\frac{y_2}{d_2} \right),\ldots,\mathcal{N}{(Q,k,m)}\left(\frac{y_{\nu-2}}{d_{\nu-2}}-\frac{y_{\nu-1}}{d_{\nu-1}} \right),\mathcal{N}{(Q,k,m)}\left(\frac{y_{\nu-1}}{d_{\nu-1}}-\gamma \right) \right). \]
Therefore, the identity in \eqref{h2} can be expressed as follows
\begin{align*}
    \frac{1}{d_1\cdots d_{\nu-1}}&\sum_{\gamma\in\mathfrak{F}_{Q,k}^{(m)}}\sum_{l\in\mathbb{Z}^{\nu-1}}\int_{\mathbb{R}^{\nu-1}}e(-l\cdot y)H_{\mathcal{N}{(Q,k,m)};d,\gamma}(y)dy
    =\frac{1}{d_1\cdots d_{\nu-1}}\sum_{\gamma\in\mathfrak{F}_{Q,k}^{(m)}}\sum_{l\in\mathbb{Z}^{\nu-1}}\widehat{H}_{\mathcal{N}{(Q,k,m)};d,\gamma}(l).
\end{align*}
Employing the Poisson summation formula to the inner sum of the above identity and inserting it back into \eqref{h3}, we obtain
\begin{align*}
   S^{(\nu)}_{Q,k}&= \sum_{d\leq\square_Q^{\nu-1}}M_{d_1}\left(\frac{Q}{d_1} \right)\cdots M_{d_{\nu-1}}\left(\frac{Q}{d_{\nu-1}} \right)\sum_{\gamma\in\mathfrak{F}_{Q,k}^{(m)}}\sum_{l\in\mathbb{Z}^{\nu-1}}H_{\mathcal{N}{(Q,k,m)};d,\gamma}(l). 
\end{align*}
As Supp$H\subset(0,\Lambda)^{\nu-1}$, we have
\[0<\mathcal{N}{(Q,k,m)}\left(\frac{l_j}{d_j}-\frac{l_{j+1}}{d_{j+1}} \right)<\Lambda^{\prime},\ j=1,\ldots,\nu-2. \]
The above inequalities implies $l_jd_{j+1}-l_{j+1}d_j\geq 1$ and
\[\Lambda>\frac{\mathcal{N}{(Q,k,m)}(l_jd_{j+1}-l_{j+1}d_j)}{d_jd_{j+1}}\geq \frac{\mathcal{N}{(Q,k,m)}}{d_jd_{j+1}}. \]
Therefore, for all $Q\geq Q_0(\Lambda)$ using above inequality we get
\[\frac{Q^2}{d_jd_{j+1}}=\frac{Q^2}{\mathcal{N}{(Q,k,m)}}\cdot \frac{\mathcal{N}{(Q,k,m)}}{d_jd_{j+1}}<\frac{Q^2\Lambda}{\mathcal{N}{(Q,k,m)}}<\frac{\Lambda}{\mathscr{C}(k,m)}=:\mathcal{C}{(\Lambda,k,m)}. \]
Note that both $Q/d_j\ge 1$ and $Q/d_{j+1}\ge 1$. Therefore, for all $Q\geq Q_0(\Lambda)$, we have
$1\leq \frac{Q}{d_j}\leq\mathcal{C}{(\Lambda,k,m)},\ j=1\ldots,\nu-1.$
Similarly, we obtain
\[\frac{Q}{q}\leq\mathcal{C}{(\Lambda,k,m)}.\numberthis\label{h5} \]
Hence,
\begin{align*}
    S^{(\nu)}_{Q,k}
%& \sum_{d\leq\square_Q^{\nu-1}}\sum_{l\in\mathbb{Z}^{\nu-1}}\sum_{\substack{1\leq n_j\leq Q/d_j\\n_jd_j\equiv b\pmod{m}}}(\mu(n_1)\cdots\mu(n_{\nu-1}))(\mu_k(n_1d_1)^2\cdots\mu_k(n_{\nu-1}d_{\nu-1})^2)\\&\times\sum_{\substack{a/q\in\mathfrak{F}_{Q,k}^{(m)}\\q\geq Q/\mathcal{C}{(\Lambda,k,m)}}}H_{\mathcal{N}{(Q,k,m)};d,\gamma}(l)\\
    =&\sum_{1\leq n_j\leq \mathcal{C}{(\Lambda,k,m)}}\mu(n_1)\cdots\mu(n_{\nu-1})\sum_{\substack{1\leq d_j\leq Q/n_j\\n_jd_j\equiv b\pmod{m}}}\mu_k(n_1d_1)^2\cdots\mu_k(n_{\nu-1}d_{\nu-1})^2\sum_{l\in\mathbb{Z}^{\nu-1}}\\&\times\sum_{\substack{a/q\in\mathfrak{F}_{Q,k}^{(m)}\\q\geq Q/\mathcal{C}{(\Lambda,k,m)}}}H_{\mathcal{N}{(Q,k,m)};d,\gamma}(l).\numberthis\label{h4}
\end{align*}
We set $\Delta_j=ql_j-ad_j$ for $j=1,\ldots,\nu-1$. Consequently, $l_j$ is uniquely determined as $l_j=\frac{\Delta_j+ad_j}{q}$. This in turn implies that
\[\frac{l_j}{d_j}-\frac{l_{j+1}}{d_{j+1}}=\frac{\Delta_j+ad_j}{qd_j}-\frac{\Delta_{j+1}+ad_{j+1}}{qd_{j+1}}=\frac{1}{q}\left(\frac{\Delta_j}{d_j}-\frac{\Delta_{j+1}}{d_{j+1}} \right),\ j=1,\ldots,\nu-2. \]
Moreover,
\[\frac{l_{\nu-1}}{d_{\nu-1}}-\frac{a}{q}=\frac{\Delta_{\nu-1}}{qd_{\nu-1}}. \]
Also, $d_j$ satisfy the congruence $d_j\equiv -\bar{a}\Delta_j\pmod{q},\ j=1,\ldots,\nu-1$, where $1\leq\bar{a}\leq q$ such that $a\bar{a}=1\pmod{q}$. 
Since Supp$H\subset(0,\Lambda)^{\nu-1}$, we get
\[0<\frac{\mathcal{N}{(Q,k,m)\Delta_j}}{qd_j}
%=\mathcal{N}{(Q,k,m)}\left(\frac{l_{j}}{d_{j}}-\frac{a}{q} \right)
=\mathcal{N}{(Q,k,m)}\left(\frac{l_{j}}{d_{j}}-\frac{l_{j+1}}{d_{j+1}} \right)+\cdots+\mathcal{N}{(Q,k,m)}\left(\frac{l_{\nu-1}}{d_{\nu-1}}-\frac{a}{q} \right)<(\nu-j)\Lambda^{\prime}. \]
For $Q\geq Q_0(\Lambda)$, the above inequalities give
\[1\leq\Delta_j\leq\frac{qd_j(\nu-j)\Lambda}{\mathcal{N}{(Q,k,m)}}\leq\frac{Q^2(\nu-j)\Lambda}{\mathcal{N}{(Q,k,m)}}\leq (\nu-j)\mathcal{C}{(\Lambda,k,m)}; \]
thus, we have $1\leq\Delta_1,\ldots,\Delta_{\nu-1}\leq (\nu-1)\mathcal{C}{(\Lambda,k,m)}$. Therefore, \eqref{h4} becomes
\begin{align*}
  S^{(\nu)}_{Q,k}=& \sum_{1\leq n_j\leq \mathcal{C}{(\Lambda,k,m)}}\mu(n_1)\cdots\mu(n_{\nu-1})\sum_{1\leq\Delta_j\leq(\nu-1)\mathcal{C}{(\Lambda,k,m)}}\sum_{\substack{1\leq d_j\leq Q/n_j\\n_jd_j\equiv b\pmod{m}}}\mu_k(n_1d_1)^2\cdots\mu_k(n_{\nu-1}d_{\nu-1})^2\\
  &\times\sum_{\substack{a/q\in\mathfrak{F}_{Q,k}^{(m)}, q\geq Q/\mathcal{C}{(\Lambda,k,m)}\\d_j\equiv -\bar{a}\Delta_j\pmod{q}}}H\left(\frac{\mathcal{N}{(Q,k,m)}}{q}\left(\frac{\Delta_1}{d_1}-\frac{\Delta_2}{d_2},\ldots,\frac{\Delta_{\nu-2}}{d_{\nu-2}}-\frac{\Delta_{\nu-1}}{d_{\nu-1}},\frac{\Delta_{\nu-1}}{d_{\nu-1}} \right) \right).
\end{align*}
We simplify the above expression by employing the linear transformation $T$ defined in \eqref{k63}. We set $\Tilde{H}=H\circ T$, which is smooth and Supp$\Tilde{H}\subset(0,(\nu-1)\Lambda]\times\cdots\times(0,\Lambda]$. The above identity then becomes
\begin{align*}
  S^{(\nu)}_{Q,k}=& \sum_{1\leq n_j\leq \mathcal{C}{(\Lambda,k,m)}}\mu(n_1)\cdots\mu(n_{\nu-1})\sum_{1\leq\Delta_j\leq(\nu-1)\mathcal{C}{(\Lambda,k,m)}}\sum_{\substack{1\leq d_j\leq Q/n_j\\n_jd_j\equiv b\pmod{m}}}\mu_k(n_1d_1)^2\cdots\mu_k(n_{\nu-1}d_{\nu-1})^2\\
  &\times\sum_{\substack{a/q\in\mathfrak{F}_{Q,k}^{(m)}, q\geq Q/\mathcal{C}{(\Lambda,k,m)}\\ d_j=-\bar{a}\Delta_j\pmod{q}}} \Tilde{H}\left(\frac{\mathcal{N}{(Q,k,m)}}{q}\left(\frac{\Delta_1}{d_1},\frac{\Delta_2}{d_2},\ldots,\frac{\Delta_{\nu-1}}{d_{\nu-1}} \right) \right). 
\end{align*}
We define
$e_j=\frac{d_j+\bar{a}\Delta_j}{q},\ j=1,\ldots,\nu-1.$
Note that $e_j$ is an integer since $d_j=-\bar{a}\Delta_j\pmod{q}$. As $d_j,\bar{a}$, and $ \Delta_j$ are all integers, it follows that $e_j\geq 1$. Moreover, using \eqref{h5}, we obtain $1\leq e_j\leq \nu\mathcal{C}{(\Lambda,k,m)},\ j=1,\ldots,\nu-1$. For each value of $e_j$, with $a,q,$ and $ \Delta_j$ fixed, we obtain a unique value of $d_j$; in particular, $d_j=qe_j-\bar{a}\Delta_j$. Also, with fixed $e_j$ and $\Delta_j$ and variable $a/q\in\mathfrak{F}_{Q,k}^{(m)}$, in order for $d_j$ to belong to the set $\{1,\ldots,\lfloor Q/n_j\rfloor \}$, $a$ and $q$ must satisfy $\frac{Q}{n_j\mathcal{C}{(\Lambda,k,m)}}\leq qe_j-\bar{a}\Delta_j\leq\frac{Q}{n_j} $.
We consider the region
\[\Omega_{n,e,k,\Delta}=\left\{(x,y): 0<x\leq y\leq 1,\ y\geq \frac{1}{\mathcal{C}{(\Lambda,k,m)}},\ \frac{n_j^{-1}}{\mathcal{C}{(\Lambda,k,m)}}\leq ye_j-x\Delta_j\leq\frac{1}{n_j},\ \Psi_k(n_j(ye_j-x\Delta_j))=1 \right\}. \]
We next set the functions $f_{k,e,\Delta}, f_{k,e,\Delta}^{(j)}$ defined on $\Omega_{n,e,k,\Delta}$ as follows
\[f_{k,e,\Delta}(x,y)=\Tilde{H}\left(f_{k,e,\Delta}^{(1)}(x,y),\ldots,f_{k,e,\Delta}^{(\nu-1)}(x,y)\right)\ \text{and}\ f_{k,e,\Delta}^{(j)}(x,y)=\frac{\mathcal{N}{(Q,k,m)}\Delta_j}{y(ye_j-x\Delta_j)},\ j=1,\ldots,\nu-1. \]
We also set $a^{\prime}=\bar{a}$ and note that $a^{\prime}/q\in\mathfrak{F}_{Q,k}^{(m)}$ with $q\geq Q/\mathcal{C}{(\Lambda,k,m)}$ as $a/q\in\mathfrak{F}_{Q,k}^{(m)}$. Therefore
\begin{align*}
   S^{(\nu)}_{Q,k}&= \sum_{1\leq n_j\leq \mathcal{C}{(\Lambda,k,m)}}\mu(n_1)\cdots\mu(n_{\nu-1})\sum_{\substack{1\leq\Delta_j\leq(\nu-1)\mathcal{C}{(\Lambda,k,m)}\\1\leq e_j\leq \nu\mathcal{C}{(\Lambda,k,m)}}}\sum_{\substack{(a^{\prime},q)\in Q\Omega_{n,e,k,\Delta}\\ (a^{\prime},q)=1, \mu_k(q)^2=1\\q\equiv b\pmod{m}}}f_{k,e,\Delta}(a^{\prime},q)\\
   &=\frac{1}{\phi(m)}\sum_{\chi}\chi(\bar{b})\sum_{1\leq n_j\leq \mathcal{C}{(\Lambda,k,m)}}\mu(n_1)\cdots\mu(n_{\nu-1})\sum_{\substack{1\leq\Delta_j\leq(\nu-1)\mathcal{C}{(\Lambda,k,m)}\\1\leq e_j\leq \nu\mathcal{C}{(\Lambda,k,m)}}}\sum_{\substack{(a^{\prime},q)\in Q\Omega_{n,e,k,\Delta}\\ (a^{\prime},q)=1, \mu_k(q)^2=1}}\chi(q)f_{k,e,\Delta}(a^{\prime},q).\numberthis\label{k65}
\end{align*}
 To estimate the inner sum in the above identity for the principal character $\chi=\chi_0$, we apply Lemma \ref{key lemma1}
\begin{align*}
  \sum_{\substack{(a^{\prime},q)\in Q\Omega_{n,e,k,\Delta}\\ (a^{\prime},q)=1, \mu_k(q)^2=1\\(q,m)=1}}f_{k,e,\Delta}(a^{\prime},q)&=\frac{6\phi(m)P_k(m)}{\pi^2}\iint_{Q\Omega_{n,e,k,\Delta}}f_{k,e,\Delta}(x,y)+\BigOm{Q^{1+\frac{1}{k}}\log^2 Q},\numberthis\label{k64}   
\end{align*}
 and for the non-principal character, we employ Proposition \ref{prop15}
\begin{align*}
  \sum_{\substack{(a^{\prime},q)\in Q\Omega_{n,e,k,\Delta}\\ (a^{\prime},q)=1, \mu_k(q)^2=1}}\chi(q)f_{k,e,\Delta}(a^{\prime},q)\ll_{m} Q^{1+\frac{1}{k}+\epsilon}\log Q.  
\end{align*}
By invoking the above estimates in \eqref{k65}, and making the change of variables $(u,v)=(Qx,Qy)$ in the main term of \eqref{k64}, we obtain
\begin{align*}
    S^{(\nu)}_{Q,k}&= \frac{6Q^2P_k(m)}{\pi^2}\sum_{1\leq n_j\leq \mathcal{C}{(\Lambda,k,m)}}\mu(n_1)\cdots\mu(n_{\nu-1})\sum_{\substack{1\leq\Delta_j\leq(\nu-1)\mathcal{C}{(\Lambda,k,m)}\\1\leq e_j\leq \nu\mathcal{C}{(\Lambda,k,m)}}}\mathcal{I}_{k}(r,e,\Delta)+\BigOm{Q^{1+\frac{1}{k}+\epsilon}\log^2 Q},\numberthis\label{h6}
\end{align*}
where
\[\mathcal{I}_{k}(r,e,\Delta)=\iint_{\Omega_{n,e,k,\Delta}}g_{k,e,\Delta}(x,y)dxdy, \numberthis\label{h7}\]
\[g_{k,e,\Delta}(x,y)=\Tilde{H}\left(g_{k,e,\Delta}^{(1)}(x,y),\ldots,g_{k,e,\Delta}^{(\nu-1)}(x,y)\right)\ \text{and}\ g_{k,e,\Delta}^{(j)}(x,y)=\frac{\mathcal{N}{(Q,k,m)}\Delta_j}{Q^2y(ye_j-x\Delta_j)},\  j=1,\ldots,\nu-1. \]
%\[g_{k,e,\Delta}^{(j)}(x,y)=\frac{\mathcal{N}{(Q,k,m)}\Delta_j}{Q^2y(ye_j-x\Delta_j)},\ j=1,\ldots,\nu-1. \]
Employing Proposition \ref{prop1} and the inequality
\[|\Tilde{H}(v)-\Tilde{H}(w)|\leq \|\Tilde{H}^{\prime} \||v-w|\leq 2\|H^{\prime}\| |v-w|, \]
we observe that \eqref{h6} holds true when $g_{k,e,\Delta}^{(j)}$ is replaced by
\[g_{k,e,\Delta}^{(j)}(x,y)=\frac{\mathscr{C}(k,m)\Delta_j}{y(ye_j-x\Delta_j)},\ j=1,\ldots,\nu-1, \]
in the formula for $g_{k,e,\Delta}$. Therefore
\begin{align*}
    S^{(\nu)}_{Q,k}&= \frac{6Q^2P_k(m)}{\pi^2}\sum_{1\leq n_j\leq \mathcal{C}{(\Lambda,k,m)}}\mu(n_1)\cdots\mu(n_{\nu-1})\sum_{\substack{1\leq\Delta_j\leq(\nu-1)\mathcal{C}{(\Lambda,k,m)}\\1\leq e_j\leq \nu\mathcal{C}{(\Lambda,k,m)}}}\mathcal{I}_{k}(r,e,\Delta)+\BigOm{Q^{1+\frac{1}{k}+\epsilon}\log^2 Q}, 
\end{align*}
where $\mathcal{I}_{k}(r,e,\Delta)$ is as in \eqref{h7}. We also note that the region can be extended to
\[\Tilde{\Omega}_{n,e,k,\Delta}=\left\{(x,y): 0<x\leq y\leq 1,\ y\geq \frac{1}{\mathcal{C}{(\Lambda,k,m)}},\ 0< ye_j-x\Delta_j\leq\frac{1}{n_j},\ \Psi_k(n_j(ye_j-x\Delta_j))=1 \right\}. \]
If $(x,y)\in\Tilde{\Omega}_{n,e,k,\Delta}\setminus\Omega_{n,e,k,\Delta} $, there is some $j$ such that $|ye_j-x\Delta_j|<1/n_j\mathcal{C}{(\Lambda,k,m)}$, which implies that
\[|g_{k,e,\Delta}^{(j)}(x,y)|\geq n_j\Delta_j\mathscr{C}(k,m)\mathcal{C}{(\Lambda,k,m)} \geq \mathscr{C}(k,m)\mathcal{C}{(\Lambda,k,m)}=\Lambda. \]
This in turn implies that $g_{k,e,\Delta}=0$ on $\Tilde{\Omega}_{n,e,k,\Delta}\setminus\Omega_{n,e,k,\Delta}$. Therefore
\begin{align*}
   S^{(\nu)}_{Q,k}=& \frac{6Q^2P_k(m)}{\pi^2}\sum_{1\leq n_j\leq \mathcal{C}{(\Lambda,k,m)}}\mu(n_1)\cdots\mu(n_{\nu-1})\sum_{\substack{1\leq\Delta_j\leq(\nu-1)\mathcal{C}{(\Lambda,k,m)}\\1\leq e_j\leq \nu\mathcal{C}{(\Lambda,k,m)}}}\iint_{\Tilde{\Omega}_{n,e,k,\Delta}}g_{k,e,\Delta}(x,y)dxdy\\&+\BigOm{Q^{1+\frac{1}{k}+\epsilon}\log^2 Q}. \numberthis\label{h8} 
\end{align*}
%\[S^{(\nu)}_{Q,k}= \frac{6Q^2P_k}{\pi^2}\sum_{1\leq n_j\leq \mathcal{C}{(\Lambda,k,m)}}\mu(n_1)\cdots\mu(n_{\nu-1})\sum_{\substack{1\leq\Delta_j\leq(\nu-1)\mathcal{C}{(\Lambda,k,m)}\\1\leq e_j\leq \nu\mathcal{C}{(\Lambda,k,m)}}}\iint_{\Tilde{\Omega}_{n,e,k,\Delta}}g_{k,e,\Delta}(x,y)dxdy+\BigOkm{Q^{1+\frac{1}{k}}\log^2 Q}. \numberthis\label{h8}\]
On taking $A_j=e_jn_j,\ B_j=\Delta_jn_j,\ A=(A_1,\ldots,A_{\nu-1}) $ and $B=(B_1,\ldots,B_{\nu-1})$ and considering the region $\Omega_{A,B,\Lambda,k}$ and map $T_{A,B}$. We set
\[\mathcal{I}_{k,\Lambda}(A,B)=\iint_{\Omega_{A,B,\Lambda,k}}\Tilde{H}\circ T_{A,B}. \]Therefore \eqref{h8} becomes
\begin{align*}
    S^{(\nu)}_{Q,k}&= \frac{6Q^2P_k(m)}{\pi^2}\sum_{\substack{1\leq A_j\leq\nu\mathcal{C}^2{(\Lambda,k,m)}\\1\leq B_j\leq (\nu-1)\mathcal{C}^2{(\Lambda,k,m)}}}\mathcal{I}_{k,\Lambda}(A,B)\sum_{n_j|\gcd(A_j,B_j)}\mu(n_1)\cdots\mu(n_{\nu-1})+\BigOm{Q^{1+\frac{1}{k}+\epsilon}\log^2 Q}\\
    &=\frac{6Q^2P_k(m)}{\pi^2}\sum_{\substack{1\leq A_j\leq\nu\mathcal{C}^2{(\Lambda,k,m)}\\1\leq B_j\leq (\nu-1)\mathcal{C}^2{(\Lambda,k,m)}\\(A_j,B_j)=1}}\mathcal{I}_{k,\Lambda}(A,B)+\BigOm{Q^{1+\frac{1}{k}+\epsilon}\log^2 Q}.
\end{align*}
Using Proposition \ref{prop1} with the above formula yields
\[\frac{S^{(\nu)}_{Q,k}}{\mathcal{N}{(Q,k,m)}}=\frac{6P_k(m)}{\pi^2\mathscr{C}(k,m)}\sum_{\substack{1\leq A_j\leq\nu\mathcal{C}^2{(\Lambda,k,m)}\\1\leq B_j\leq (\nu-1)\mathcal{C}^2{(\Lambda,k,m)}\\(A_j,B_j)=1}}\mathcal{I}_{k,\Lambda}(A,B)+\BigOm{Q^{-1+\frac{1}{k}+\epsilon}\log^2 Q}. \]

By the standard approximation argument, we next approximate $H$ by the characteristic function of a box $\mathfrak{B}\in(0,\Lambda)^{\nu-1}$ from above and from below. Thus, we have
\begin{align*}
   \mathcal{S}^{\nu}(\mathfrak{B})&=\lim_{Q\to\infty}\frac{1}{\mathcal{N}{(Q,k,m)}}S^{(\nu)}_{Q,k}=\frac{6P_k(m)}{\pi^2\mathscr{C}(k,m)}\sum_{\substack{1\leq A_j\leq(\nu-1)\mathcal{C}^2{(\Lambda,k)}\\1\leq B_j\leq \nu\mathcal{C}^2{(\Lambda,k)}\\(A_j,B_j)=1}}\iint_{\Omega_{A,B,\Lambda,k}}\chi_{\mathcal{B}}\circ T\circ T_{A,B} \\
   &=\frac{6P_k(m)}{\pi^2\mathscr{C}(k,m)}\sum_{\substack{1\leq A_j\leq(\nu-1)\mathcal{C}^2{(\Lambda,k)}\\1\leq B_j\leq \nu\mathcal{C}^2{(\Lambda,k)}\\(A_j,B_j)=1}}\text{area}\left(\Omega_{A,B,\Lambda,k}\cap T_{A,B}^{-1}(T^{-1}\mathfrak{B}) \right).
\end{align*}
This completes the proof of Theorem \ref{v correlation}.
\section{Pair correlation }
In this final section, we prove Theorem \ref{main result} by combining estimates on exponential sums over elements in $\mathfrak{F}_{Q,k}^{(m)}$, and invoking the key Lemma \ref{key lemma} on weighted lattice point counting.
  \subsection{Proof of Theorem \ref{main result}}
 To prove Theorem \ref{main result}, we need to estimate, for any positive real number $\Lambda$, the quantity
\[S_{\mathfrak{F}_{Q,k}^{(m)}}^{(2)}(\Lambda)=\frac{1}{\mathcal{N}{(Q,k,m)}}\#\{(\gamma_1,\gamma_2)\in \left(\mathfrak{F}_{Q,k}^{(m)}\right)^2: \gamma_1\ne\gamma_2, \gamma_1-\gamma_2\in\frac{1}{\mathcal{N}{(Q,k,m)}}(0,\Lambda)+\mathbb{Z}\},\numberthis\label{S lambda}\]
as $Q\to \infty.$ 
% We use the approach of Boca and Zaharescu in \cite{BocaF}, and change the problem of counting the tuples in \eqref{S lambda} into estimating the exponential sum over Farey fractions with $k$-free denominators. We have
Let $H$ be any continuously differentiable function with Supp\ $H\subset(0,\Lambda)$. To estimate \eqref{S lambda}, we consider \eqref{k66} with $\nu=2$. We obtain
\[S_{Q,k}^{(2)}
     =\sum_{r\in \mathbb{Z}}c_r\sum_{\gamma_1\in \mathfrak{F}_{Q,k}^{(m)}}e(r\gamma_1)\sum_{\gamma_2\in \mathfrak{F}_{Q,k}^{(m)}}e(r\gamma_2).\numberthis\label{exp}\]
     We employ Lemma \ref{exp nu} into the above identity and express it as
\begin{align*}
    S_{Q,k}^{(2)}&=\sum_{r\in\mathbb{Z}}c_r\sum_{d_1,d_2\leq Q}\mu(d_1)\mu(d_2)\sum_{\substack{q_1\leq\frac{Q}{d_1}, q_2\leq\frac{Q}{d_2}\\ [q_1,q_2]|r\\ q_1d_1\equiv b\pmod{m}\\ q_2d_2\equiv b\pmod{m} }}q_1q_2\mu_k(q_1d_1)^2\mu_k(q_2d_2)^2\\
&=\sum_{d_1,d_2\leq Q}\mu(d_1)\mu(d_2)\sum_{\substack{q_1\leq\frac{Q}{d_1}, q_2\leq\frac{Q}{d_2}\\ q_1d_1\equiv b\pmod{m}\\ q_2d_2\equiv b\pmod{m} }}q_1q_2\mu_k(q_1d_1)^2\mu_k(q_2d_2)^2\sum_{r\in\mathbb{Z}}c_{r[q_1,q_2]},\numberthis\label{k35}
\end{align*}
where $[q_1,q_2]$ is least common multiple of $q_1$ and $q_2$. By using \cite[(3.4)]{Chaubey}, we obtain
\begin{align*}
\sum_{r\in\mathbb{Z}}c_{[q_1,q_2]r}&
%=\sum_{r\in\mathbb{Z}}\widehat{H}_{[q_1,q_2]}(r)
    %=\sum_{r\in\mathbb{Z}}H_{[q_1,q_2]}(r)
    =\sum_{r\in\mathbb{Z}}\frac{1}{[q_1,q_2]}H\left(\frac{r\mathcal{N}{(Q,k,m)}}{[q_1,q_2]} \right).\numberthis\label{k36}
\end{align*}
Using the above identity into \eqref{k35}, we obtain
\begin{align*}
   S_{Q,k}^{(2)} &= \sum_{d_1,d_2\leq Q}\mu(d_1)\mu(d_2)\sum_{\substack{q_1\leq\frac{Q}{d_1}, q_2\leq\frac{Q}{ d_2}\\ q_1d_1\equiv b\pmod{m}\\ q_2d_2\equiv b\pmod{m} }}\gcd(q_1,q_2)\mu_k(q_1d_1)^2\mu_k(q_2d_2)^2\sum_{r\in\mathbb{Z}}H\left(\frac{r\mathcal{N}{(Q,k,m)}}{[q_1,q_2]} \right)\\
   &=\sum_{\delta\leq Q}\delta \sum_{d_1,d_2\leq \frac{Q}{\delta}}\mu(d_1)\mu(d_2)\sum_{\substack{q_1\leq\frac{Q}{\delta d_1}, q_2\leq\frac{Q}{\delta d_2}\\ q_1\delta d_1\equiv b\pmod{m}\\ q_2\delta d_2\equiv b\pmod{m} \\ (q_1,q_2)=1}}\mu_k(q_1d_1\delta)^2\mu_k(q_2d_2\delta)^2\sum_{r\in\mathbb{Z}}H\left(\frac{r\mathcal{N}{(Q,k,m)}}{q_1q_2\delta} \right).\numberthis\label{k51}
\end{align*}
 For the non-zero contribution from $H$, using the fact that Supp$H \subset (0,\Lambda)$ and Proposition \ref{prop1}, one must have
\[0<\frac{\mathcal{N}{(Q,k,m)}r}{q_1q_2\delta}<\Lambda,\numberthis\label{nonzero contribution}\] which implies 
\[\delta d_1d_2r<\frac{\Lambda}{{\mathscr{C}(k,m)}}=:\mathcal{C}{(\Lambda,k,m)}.\]
By applying the above estimate and observing that
\[H\left(\frac{\mathcal{N}{(Q,k,m)}r}{q_1
q_2\delta} \right)=H\left(\frac{Q^2\mathscr{C}(k,m)r}{q_1q_2\delta} \right)+\BigOm{\frac{r}{q_1q_2\delta}Q^{\frac{2(2k-1)}{3k-2}}},\]
the sum in \eqref{k51} can be expressed as 
\begin{align*}
    S_{Q,k}^{(2)} =&\sum_{\substack{d_1,d_2,\delta, r\geq 1\\\delta d_1d_2r<\mathcal{C}{(\Lambda,k,m)}}}\delta \mu(d_1)\mu(d_2)\sum_{\substack{q_1\leq\frac{Q}{\delta d_1}, q_2\leq\frac{Q}{\delta d_2}\\ q_1\delta d_1\equiv b\pmod{m}\\ q_2\delta d_2\equiv b\pmod{m} \\ (q_1,q_2)=1}}\mu_k(q_1d_1\delta)^2\mu_k(q_2d_2\delta)^2H\left(\frac{Q^2\mathscr{C}(k,m)r}{q_1q_2\delta} \right)\\ &+\BigOm{Q^{\frac{2(2k-1)}{3k-2}}(\log Q)^2}\\
    =&\frac{1}{\phi^2(m)}\sum_{\substack{\chi(\text{mod}\ {m})\\\chi^{\prime}(\text{mod}\ {m})}}\sum_{\substack{d_1, d_2,\delta, r\geq 1\\\delta d_1d_2r<{\mathcal{C}(\Lambda,k,m)}}}\delta {\chi}(\delta d_1\bar{b}){\chi^{\prime}}(\delta d_2\bar{b})\mu_k(\delta)^2\mu(d_1)\mu(d_2)\sum_{\substack{q_1\leq\frac{Q}{\delta d_1}, q_2\leq\frac{Q}{\delta d_2}\\(q_1,q_2)=1 }}\chi(q_1)\chi^{\prime}(q_2)\\&\times\mu_k(q_1d_1\delta)^2\mu_k(q_2d_2\delta)^2 H\left(\frac{Q^2\mathscr{C}(k,m)r}{q_1q_2\delta} \right)+\BigOm{Q^{\frac{2(2k-1)}{3k-2}}(\log Q)^2}.\numberthis\label{k61}
\end{align*}
Next, we deal with the cases of principal and non-principal characters separately.

Case-I: If $\chi=\chi_0$ and $\chi^{\prime}=\chi_0^{\prime}$ then we have
\begin{align*}
   S_{Q,k}^{(2)}(\chi_0,\chi_0^{\prime})&=\sum_{\substack{d_1, d_2,\delta, r\geq 1\\\delta d_1d_2r<{\mathcal{C}(\Lambda,k,m)}\\(d_1d_2\delta,m)=1}}\delta\mu_k(\delta)^2\mu(d_1)\mu(d_2)\sum_{\substack{q_1\leq\frac{Q}{\delta d_1}, q_2\leq\frac{Q}{\delta d_2}\\ (q_1q_2,m)=1=(q_1,q_2) }}\mu_k(q_1d_1\delta)^2\mu_k(q_2d_2\delta)^2 H\left(\frac{Q^2\mathscr{C}(k,m)r}{q_1q_2\delta} \right)\numberthis\label{k57}.  
\end{align*}
To estimate the inner sum in the above identity, we employ Lemma \ref{key lemma} which counts the $k$-free lattice points with some weight and congruence constraints. Note that, since Supp $H\subset(0,\Lambda)$, for the non-zero contribution from $H$, one has $0<\frac{Q^2\mathscr{C}(k,m)r}{x_1x_2\delta}<\Lambda$. For $0<x_1\leq \frac{Q}{\delta d_1}$ and $0<x_2\leq \frac{Q}{\delta d_2}$, we obtain
\[\frac{1}{x_1}\leq \frac{\mathcal{C}{(\Lambda,k,m)}}{rd_2Q}\ \text{and}\ \frac{1}{x_2}\leq \frac{\mathcal{C}{(\Lambda,k,m)}}{rd_1Q}. \numberthis\label{x_1}\]
%Similarly
%\[\frac{1}{x_2}\leq \frac{\mathcal{C}_{\Lambda}}{rd_1Q}.\numberthis\label{x_2}\]
Using \eqref{x_1} and the necessary condition for the non-zero contribution of $H$,  we get
\[\left|\frac{\partial H}{\partial x_1}(x_1,x_2)\right|\ll\frac{1}{Q}\ \text{and}\ \left|\frac{\partial H}{\partial x_2}(x_1,x_2)\right|\ll\frac{1}{Q}.\]
Hence
\[\|DH\|_{\infty}\ll\frac{1}{Q}.\]
Employing Lemma \ref{key lemma} with $r_1=r_2=m,\ \delta_1=d_1\delta,\ \delta_2=d_2\delta$, and $f(a,b)=H\left(\frac{Q^2\mathscr{C}(k,m)r}{ab\delta} \right)$, the inner-sum in \eqref{k57} is expressed as
\begin{align*}
    \sum_{\substack{q_1\leq\frac{Q}{\delta d_1}, q_2\leq\frac{Q}{\delta d_2}\\ (q_1q_2,m)=1, (q_1,q_2)=1 }}&\mu_k(q_1d_1\delta)^2\mu_k(q_2d_2\delta)^2H\left(\frac{Q^2\mathscr{C}(k,m)r}{q_1q_2\delta} \right)\\&=\frac{6P_{m,m}^{k}(d_1\delta,d_2\delta)}{\pi^2}\int_0^{\frac{Q}{\delta d_1}}\int_0^{\frac{Q}{\delta d_2}}  H\left(\frac{Q^2\mathscr{C}(k,m)r}{xy\delta} \right)dxdy+\BigOm{\tau(m)Q^{1+\frac{1}{k}}\log^2Q}.\numberthis\label{k4}
\end{align*}
Using the fact that Supp$H\subset(0,\lambda)$ and by a suitable change of variable the integral in \eqref{k4} can be expressed as
\[\int_0^{\frac{Q}{\delta d_1}}\int_0^{\frac{Q}{\delta d_2}}H\left(\frac{Q^2\mathscr{C}(k,m)r}{xy\delta} \right)dxdy=\frac{Q^2\mathscr{C}(k,m)r}{\delta}\int_{r\delta d_1d_2\mathscr{C}(k,m)}^{\Lambda}\frac{H(\lambda)}{\lambda^2}\log\left(\frac{\lambda}{r\delta d_1d_2\mathscr{C}(k,m)} \right)d\lambda. \]
The above identity with \eqref{k57} and \eqref{k4} gives
\begin{align*}
    S_{Q,k}^{(2)}(\chi_0,\chi_0^{\prime})=&\frac{6Q^2\mathscr{C}(k,m)}{\pi^2}\sum_{\substack{d_1, d_2,\delta, r\geq 1\\\delta d_1d_2r<{\mathcal{C}(\Lambda,k,m)}\\(d_1d_2\delta,m)=1}}r\mu_k(\delta)^2\mu(d_1)\mu(d_2)P_{m,m}^{k}(d_1\delta,d_2\delta)\int_{r\delta d_1d_2\mathscr{C}(k,m)}^{\Lambda}\frac{H(\lambda)}{\lambda^2}\\&\times\log\left(\frac{\lambda}{r\delta d_1d_2\mathscr{C}(k,m)} \right)d\lambda+\BigOm{Q^{1+\frac{1}{k}}\log^2Q}\\
    =&\frac{6Q^2\mathscr{C}(k,m)}{\pi^2}\sum_{1\leq n<\mathcal{C}(\Lambda,k,m)}\int_{n\mathscr{C}(k,m)}^{\Lambda}\frac{H(\lambda)}{\lambda^2}\log\left(\frac{\lambda}{n\mathscr{C}(k,m)} \right)d\lambda\\&\times\sum_{\substack{\delta d_1d_2r=n\\(d_1d_2\delta,m)=1}}r\mu_k(\delta)^2\mu(d_1)\mu(d_2)P_{m,m}^{k}(d_1\delta,d_2\delta)+\BigOm{Q^{1+\frac{1}{k}}\log^2Q}\\
    =&\frac{6Q^2\mathscr{C}(k,m)}{\pi^2}\int_{0}^{\Lambda}\frac{H(\lambda)}{\lambda^2}\sum_{1\leq n<\mathcal{C}(\Lambda,k,m)}F_k(n)\log\left(\frac{\lambda}{n \mathscr{C}(k,m)} \right)d\lambda+\BigOm{Q^{1+\frac{1}{k}}\log^2Q},\numberthis\label{k62}
\end{align*}
where
\begin{align*}
    F_k(n)=\sum_{\substack{\delta d_1d_2r=n\\(d_1d_2\delta,m)=1}}r\mu_k(\delta)^2\mu(d_1)\mu(d_2)P_{m,m}^{k}(d_1\delta,d_2\delta).
\end{align*}

Case-II: Suppose at least one of $\chi$ or $\chi^{\prime}$ is non-principal.
\begin{align*}
  S_{Q,k}^{(2)}(\chi,\chi^{\prime})=& \sum_{\substack{d_1, d_2,\delta, r\geq 1\\\delta d_1d_2r<{\mathcal{C}(\Lambda,k,m)}}}\delta {\chi}(\delta d_1\bar{b}){\chi^{\prime}}(\delta d_2\bar{b})\mu_k(\delta)^2\mu(d_1)\mu(d_2)\sum_{\substack{q_1\leq\frac{Q}{\delta d_1}, q_2\leq\frac{Q}{\delta d_2}\\ (q_1,q_2)=1 }}\chi(q_1)\chi^{\prime}(q_2)\\&\times \mu_k(q_1d_1\delta)^2\mu_k(q_2d_2\delta)^2H\left(\frac{Q^2\mathscr{C}(k,m)r}{q_1q_2\delta} \right)\\
  =& \sum_{\substack{d_1, d_2,\delta, r\geq 1\\\delta d_1d_2r<{\mathcal{C}(\Lambda,k,m)}}}\delta {\chi}(\delta d_1\bar{b}){\chi^{\prime}}(\delta d_2\bar{b})\mu_k(\delta)^2\mu(d_1)\mu(d_2)\sum_{\substack{q_1\leq\frac{Q}{\delta d_1}}}\chi(q_1)\mu_k(q_1d_1\delta)^2\\&\times\sum_{\substack{q_2\leq\frac{Q}{\delta d_2}\\(q_2, q_1)=1}}\chi^{\prime}(q_2)\mu_k(q_2d_2\delta)^2 H\left(\frac{Q^2\mathscr{C}(k,m)r}{q_1q_2\delta} \right).\numberthis\label{k42}
\end{align*}
In order to estimate the inner sum in the above identity, we use Proposition \ref{prop15} with $f\left(\frac{M}{q_2} \right)=H\left(\frac{Q^2\mathscr{C}(k,m)r}{q_1q_2\delta} \right)$ and obtain
\[\sum_{\substack{q_2\leq\frac{Q}{\delta d_2}\\(q_2, q_1)=1}}\chi^{\prime}(q_2)\mu_k(q_2d_2\delta)^2 H\left(\frac{Q^2\mathscr{C}(k,m)r}{q_1q_2\delta} \right)\ll_{m} \tau(q_1)\left(\frac{Q}{\delta d_2}\right)^{\frac{1}{k}}\log \frac{Q}{\delta d_2}, \]
and this in conjunction with \eqref{k42} yields
\[S_{Q,k}^{(2)}(\chi,\chi^{\prime})\ll_{m} Q^{1+\frac{1}{k}+\epsilon}\log Q. \numberthis\label{k27}\]
We collect the estimates from \eqref{k62} and \eqref{k27} and insert them into \eqref{k61}. We obtain
\begin{align*}
   S_{Q,k}^{(2)} &=\frac{6Q^2\mathscr{C}(k,m)}{\pi^2\phi^2(m)}\int_{0}^{\Lambda}\frac{H(\lambda)}{\lambda^2}\sum_{1\leq n<\mathcal{C}(\Lambda,k,m)}F_k(n)\log\left(\frac{\lambda}{n \mathscr{C}(k,m)} \right)d\lambda+\BigOm{Q^{1+\frac{1}{k}+\epsilon}\log^2Q}\\
   &=Q^2\mathscr{C}(k,m)\int_{0}^{\Lambda}H(\lambda)\mathfrak{g}_{m,k}(\lambda)d\lambda+\BigOm{Q^{1+\frac{1}{k}+\epsilon}\log^2Q}.
\end{align*}
Therefore
\[\frac{S_{Q,k}^{(2)}}{\mathcal{N}(Q,k)}=\int_{0}^{\Lambda}H(\lambda)\mathfrak{g}_{m,k}(\lambda)d\lambda+\BigOm{Q^{-1+\frac{1}{k}+\epsilon}\log^2Q}. \]
We next approximate $H$ by the characteristic function of $(0,\Lambda)$, using the standard approximation argument, to obtain
\[\mathcal{S}^{2}((0,\Lambda))=\lim_{Q\to\infty}S_{\mathfrak{F}_{Q,k}^{(m)}}^{(2)}(\Lambda)=\int_0^{\Lambda}\mathfrak{g}_{m,k}(\lambda)d\lambda.\]
This completes the proof of Theorem \ref{main result}.
%\section{Data availability}
%No data was used for the research described in the article.
%\section{Conflict of interest statement}
%The authors have no conflicts of interest to declare. All co-authors have seen and agree with the contents of the article and there is no financial interest to report.

\bibliographystyle{plain}
\bibliography{reference}
\end{document}